\def\return{\texttt{return }}
\newtheorem{theorem}{Theorem}
\newtheorem{lemma}[theorem]{Lemma}
\newtheorem{corollary}[theorem]{Corollary}
\newtheorem{proposition}[theorem]{Proposition}
\newtheorem{defn}[theorem]{Definition}
\theoremstyle{remark}
\newtheorem{example}[theorem]{Example}
\newtheorem{remark}[theorem]{Remark}
\def\N{{\mathbb N}}
\def\Z{{\mathbb Z}}
\def\pe[#1]{{\left\lceil #1\right\rceil}}
\def\peb[#1]{{\left\lfloor #1\right\rfloor}}
\def\GAP{\textsf{GAP}}
\title[Feng-Rao numbers of semigroups generated by intervals]{On the generalized Feng-Rao numbers of numerical semigroups generated by intervals}
\author{M. Delgado}
\address{CMUP, Departamento de Matematica, Faculdade de Ciencias, Universidade do Porto, Rua do Campo Alegre 687, 4169-007 Porto, Portugal}
\email{mdelgado@fc.up.pt}
\thanks{The first author acknowledges the support of the Centro de Matem\'atica da Universidade do Porto financed by FCT through the programs POCTI and POSI, with Portuguese and European Community structural funds, as well as the support of the FCT project PTDC/MAT/65481/2006.}
\author{J. I. Farr\'{a}n}
\address{Departamento de Matem\'atica Aplicada, Escuela Universitaria de Inform\'atica, 
Campus de Segovia - Universidad de Valladolid, Plaza de Santa Eulalia 9 y 11 - 40005 Segovia, Spain}
\email{jifarran@eii.uva.es}
\thanks{The second author is supported by the project MICINN-MTM-2007-64704.}
\author{P. A. Garc\'{\i}a-S\'{a}nchez}
\address{Departamento de \'Algebra, Universidad de Granada, 18071 Granada, Espa\~na}
\email{pedro@ugr.es}
\author{D. Llena}
\address{Departamento de Geometr\'{\i}a, Topolog\'{\i}a y Qu\'{\i}mica Org\'anica, Universidad de Almer\'{\i}a, 04120 Almer\'{\i}a, Espa\~na}
\email{dllena@ual.es}
\thanks{The third and fourth authors are supported by the projects MTM2010-15595, FQM-343 and FEDER funds}
\thanks{The third author is also supported by the project FQM-5849.}
\date{\today}
\keywords{AG codes, weight hierarchy, numerical semigroups, order bounds, Goppa-like bounds, Feng-Rao numbers.}
\subjclass[2010]{20M14,11Y55,11T71}
\begin{document}

\begin{abstract}
We give some general results concerning the computation of the generalized Feng-Rao numbers of numerical semigroups. In the case of a numerical semigroup generated by an interval, a formula for the $r^{th}$ Feng-Rao number is obtained.
\end{abstract}

\maketitle


\section{Introduction}

The Feng-Rao distance for a numerical semigroup was introduced in coding theory as 
a lower bound for the minimum distance of a one-point algebraic geometry (error-correcting) code (see \cite{FR}). 
This {\em order bound}\/, computed from Weierstrass semigroups, 
improves the lower bound for the minimum distance given by Goppa with the aid of the Riemann-Roch theorem. 
Moreover, the Feng-Rao distance is essential in a majority voting decoding 
procedure, that is the most efficient one for such kind of codes (see \cite{HvLP}). 

Even though the Feng-Rao distance was introduced for Weierstrass semigroups and 
for decoding purposes, it is just a combinatorial concept that makes sense for 
arbitrary numerical semigroups. This problem has been broadly studied in the 
literature for different types of semigroups (see \cite{WSPink}, \cite{Arf} or \cite{KirPel}). 
In numerical terms, the above mentioned improvement of the Goppa distance in coding theory means the following: 
For a semigroup $S$ with genus $g$ and $m\in S$ the {\em Feng-Rao distance}\/ satisfies 
\[
\delta_{FR}(m+1)\geq m+2-2g
\]
if $m>2g-2$, and equality holds for $m>>0$. 

On the other hand, the concept of minimum distance for an error-correcting code 
has been generalized to the so-called {\em generalized Hamming weights}\/. 
They were introduced independently by Helleseth et al. in \cite{HKM} and 
Wei in \cite{Wei} for applications in coding theory and cryptography, respectively. 

The natural generalization of the Feng-Rao distance to higher weights was introduced 
in \cite{HeiPel}. The computation of these generalized Feng-Rao distances turns out 
to be a very hard problem. Actually, very few results are known about this subject, 
and they are completely scattered in the literature 
(see for example \cite{AngCar}, \cite{HeiPel} or \cite{WCC}). 

This paper studies the asymptotical behaviour of the generalized Feng-Rao 
distances, that is, $\delta_{FR}^{r}(m)$ for $r\geq 2$ and $m>>0$. 
In fact, it was proven in \cite{WCC} that 
\begin{equation}\label{Er}
\delta_{FR}^{r}(m)=m+1-2g+E_{r}
\end{equation}
for $m>>0$ (details in the next section). The number $E_{r}\equiv \mathrm E(S,r)$ 
is called the $r$-th Feng-Rao number of the semigroup $S$, and they are 
unknown but for very few semigroups and concrete $r$'s. 
For example, it was proven in \cite{JMDA} that 
\[
\textrm E(S,r)=\rho_{r}
\]
for hyper-elliptic semigroups $S=\langle 2,2g+1\rangle$\/, with multiplicity 2 and genus $g$\/, 
and for Hermitian-like semigroups $S=\langle a,a+1\rangle$\/, 
where $S=\{\rho_1=0<\rho_2<\cdots\}$\/. In fact, it is not even known yet if 
this formula holds for arbitrary numerical semigroups generated by two elements $S=\langle a,b\rangle$\/. 
Nevertheless, our experimental results point in this direction. 

The main purpose of this paper is precisely to compute $\mathrm E(S,r)$ 
for semigroups generated by intervals, as a certain generalization of the Hermitian-like case. 
As a byproduct, we provide some general algorithms, implemented in GAP \cite{numericalsgps}, 
to compute Feng-Rao numbers.

The paper is written as follows. Section 2 presents the general definitions concerning numerical semigroups, 
Feng-Rao distances and Feng-Rao numbers, and some convenient visualizations of integers for a given semigroup. 
The reader may find useful to see some images in Subsection~\ref{convenient.visualisation}. 

The concept of amenable subset of a numerical semigroup is introduced in section 3. It consists of 
a set that is closed for taking divisors. It implies that distances between elements are somehow controlled. 
Amenable sets play a fundamental role in some general results on Feng-Rao numbers of numerical semigroups. 
These results allowed the implementation of a function to compute the Feng-Rao numbers of a numerical semigroup 
which works quite well. It uses some of the functionalities of the \GAP\ package \textsf{numericalsgps}~\cite{numericalsgps} 
and will hopefully be part of a future release of that package. We give in this way some general results. 
Among them, an important lemma shows that the divisors of a configuration 
are the divisors of the shadow plus the elements above the ground.

Many examples computed with the referred function helped us to gain the necessary intuition to obtain a formula for the 
$r^{th}$ Feng-Rao number of a numerical semigroup generated by an interval, which is presented in Section 4. 
This is the last and main result of this paper, and we briefly explain it in the sequel. 

Recall that we are aiming to find a formula for $\delta^r(m)$, when $S$ is a semigroup generated by an interval of integers. 
The strategy will be as follows: Suppose that there is an amenable set $M$ which is an optimal configuration whose shadow 
$L_M=[m,m+a+b)\cap M$ does not contain the ground (that is, $L_M\ne [m,m+a+b)\cap\N$). 
Then, using the results of Subsection~\ref{subsec:ordered_amenable}, we can construct an $r$-amenable set $N$ 
(said to be ordered amenable) whose shadow $L_N$ is an interval starting in $m$ and has no more elements than $\sharp L_M$. 
Furthermore, by Lemma~\ref{seguidos-dan-menos}, $\sharp\mathrm D(L_N)\le\sharp\mathrm D(L_M)$ which implies that the number 
of divisors of $N$ is no bigger than the number of divisors of $M$ and therefore $N$ is also an optimal configuration. 
It follows that ordered amenable sets are optimal configurations. 
Thus, the problem of computing the generalized Feng-Rao numbers is reduced to counting the divisors of intervals 
of the form $[m,m+\ell]\cap \N$, with $\ell\le a+b-1$. This is done by Corollary~\ref{cor:divs_intervals}. 
The main result, which gives a formula, then follows. 

\section{Definitions and basic results}
This section is divided into several subsections. We start with several basic definitions and we introduce some notation. The reader is referred to the book~\cite{NS} for details. Then we give the definition of generalized Feng-Rao numbers and end the section by giving a way to visualize the integers which is convenient for our purposes.
\subsection{Basic definitions and notation}

Let $S$ be a numerical semigroup, that is, a submonoid of $\N$
such that $\sharp(\N\setminus S)<\infty$ and $0\in S$\/.
Denote respectively by $g:=\sharp(\N\setminus S)$ and $c\in S$
the {\em genus} and the {\em conductor} of $S$\/, being $c$
by definition the (unique) element in $S$ such that $c-1\notin S$
and $c+l\in S$ for all $l\in\N$. Note that if $S$ is the Weierstrass
semigroup of a curve $\chi$ at a point $P$\/, $g$ equals to the
geometric genus of $\chi$\/, and the elements of $G(S):=\N\setminus S$
are called the Weierstrass gaps at $P$\/. For an arbitrary semigroup, these elements are simply called gaps.

It is well known (see for instance \cite[Lemma 2.14]{NS}) that $c\leq 2g$\/, and thus the \lq\lq largest gap\rq\rq \ of $S$
is $c-1\leq 2g-1$. The number $c-1$ is precisely the {\em Frobenius number}
of $S$\/. 
The \emph{multiplicity} of a numerical semigroup is the least positive integer belonging to it.

We say that a numerical semigroup $S$ is generated by a set of elements
$G\subseteq S$ if every element $x\in S$ can be written as a linear combination
\[
x=\displaystyle\sum_{g\in G}\lambda_{g}g,
\]
where finitely many $\lambda_{g}\in\N$ are non-zero.
In fact, it is classically known that every numerical semigroup
is finitely generated, that is, we can find a finite set $G$ generating $S$\/.
Furthermore, every generator set contains the set of irreducible elements,
$x\in S$ being irreducible if $x=u+v$ and $u,v\in S$ implies $u\cdot v=0$,
and this set actually generates $S$, so that it is usually called \lq\lq the\rq\rq \
generator set of $S$, whose cardinality is called {\em embedding dimension} of $S$
(more details in \cite{NS}).
Most of the times, we will suppose $S$ is minimally generated by $\{n_1<\cdots<n_e\}$. Its embedding dimension is $e$. Note that if $a$ and $b$ are integers, with $b<a$, and $S$ is minimally generated by the interval $[a,a+b]\cap \N$, then $n_1$ is $a$, $n_e-n_1$ is $b$ and the embedding dimension is $b+1$.  

Finally, if we enumerate the elements of $S$ in increasing order
\[
S=\{\rho_1=0<\rho_2<\cdots\},
\]
we note that every $x\geq c$ is the $(x+1-g)$\/-th element of $S$\/,
that is $x=\rho_{x+1-g}\,$.

\smallskip

The last part of this paper will be devoted to semigroups generated by intervals.

Let $a$ be a positive integer and $b$ an integer with $0<b<a$. Let $S=\langle a,a+1,\ldots,a+b\rangle$. Then $S$ is a numerical semigroup with multiplicity $a$ and embedding dimension $b+1$. As usual, let $c$ denote the conductor of $S$ and $m\ge 2c-1$. 

\subsection{Feng-Rao numbers}

Next we introduce the definitions for generalized Feng-Rao distances.
Although there is a subsection dedicated to the concept of divisor, we already need the definition.

\begin{defn}\label{def:division}
Given $x\in S$, we say that $\alpha\in S$ \emph{divides} $x$ if $x-\alpha\in S$. We denote by $\mathrm D(x)=\{\alpha\in S\mid x-\alpha \in S\}$ the set of \emph{divisors} of $x$. 
\end{defn}
\begin{defn}\label{FRdist}
Let $S$ be a numerical semigroup. For $m_1\in S$, let 
$\nu(m_1):=\sharp \mathrm D(m_1)$. The (classical) {\em Feng-Rao distance} of $S$ is defined
by the function
\[
\begin{array}{rcl}
\delta_{FR}\;:\;S&\longrightarrow&\N \\
m&\mapsto&\delta_{FR}(m):=\min\{\nu(m_1)\mid m_1\geq m,\;\;m_1\in S\}.
\end{array}
\]
\end{defn}

There are some well-known facts about the functions $\nu$ and $\delta_{FR}$ for an arbitrary semigroup $S$
(see \cite{HvLP}, \cite{KirPel} or \cite{WSPink} for further details).
An important one is that
$\delta_{FR}(m)\geq m+1-2g$ for all $m\in S$ with $m\geq c$\/,
and that equality holds if moreover $m\geq 2c-1$ (see also Proposition~\ref{prop:well-known}).

The classical Feng-Rao distance corresponds to $r=1$ in the following definition.

\begin{defn}\label{FRgen}
Let $S$ be a numerical semigroup.
For any set of distinct $m_1,\ldots,m_r\in S$\/, let
$\nu(m_1,\ldots,m_r):=\sharp \mathrm D(m_1,\ldots,m_r)$, 
where $\mathrm D(m_1,\ldots,m_r):=\mathrm D(m_1)\cup\cdots\cup \mathrm D(m_r)$\/. 

For any integer $r\geq 1$,
the {\em $r$\/-th Feng-Rao distance} of $S$ is defined by the function
\[
\begin{array}{rcl}
{\delta_{FR}^{r}}\;:\;S&\longrightarrow&\N \\
m&\mapsto&\delta_{FR}^{r}(m)
\end{array}\]
where
$
\delta_{FR}^{r}(m)=
\min\{\nu(m_1,\ldots,m_r)\mid m\leq m_{1}<\cdots<m_{r},\;\;m_{i}\in S\}.
$
\end{defn}

Very few results are known for the numbers $\delta_{FR}^{r}$\/,
and their computation is very hard from both a theoretical and
computational point of view. The main result we need describes
the asymptotical behavior for $m>>0$, and was proven in \cite{WCC}.
This result tells us that there exists a certain constant $E_{r}=\mathrm E(S,r)$\/,
depending on $r$ and $S$, such that
\[
\delta_{FR}^{r}(m)=m+1-2g+E_{r}
\]
for $m\geq 2c-1$.
\begin{defn}\label{FRnumber}
This constant $\textrm E(S,r)$ is called the {\em $r$-th Feng-Rao number} of the semigroup $S$\/.
\end{defn}
Furthermore,
it is also true that $\delta_{FR}^{r}(m)\geq m+1-2g+\mathrm E(S,r)$ for $m\geq c$
(see \cite{WCC}).

Note that, for any non-negative integer $k$ and $m \ge 2c-1$, $\delta_{FR}^{r}(m+k) = k + \delta_{FR}^{r}(m)$.

We will simplify the notation by writting $\delta^r(m)$ for $\delta_{FR}^r(m)$.

\begin{defn}\label{def:optimal_configuration}
Let $S$ be a numerical semigroup and let $m\in S$.
A finite subset of $S\cap [m,\infty)$ is called a $(S,m)$-\emph{configuration}, or simply a configuration. A configuration $M$ of cardinality $r$ is said to be \emph{optimal} if $\delta^r(m)=\sharp\mathrm D(M)$, where $\mathrm D(M):=\cup_{x\in M}\mathrm D(x)$\/. 
\end{defn}

\subsection{A convenient visualisation of the integers}\label{convenient.visualisation}
We can think of the integers as points disposed regularly on a cylindrical helix (Figure~\ref{fig:helix}). 

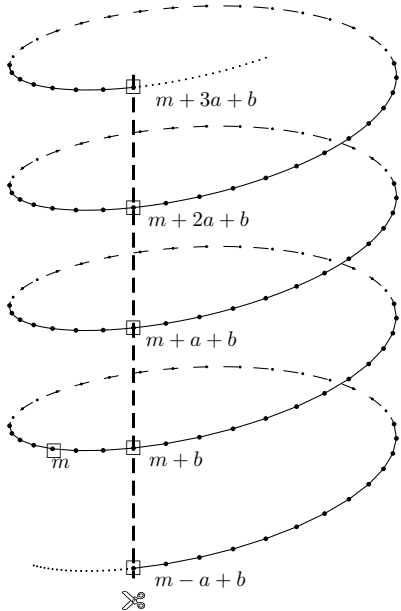
\begin{figure}[h]
\begin{center}
\begin{tikzpicture}[y=4cm/360,scale=.8]
\pgfplothandlerlineto
\pgfsetplotmarksize{.05ex}
\pgfplothandlermark{\pgfuseplotmark{*}}
\pgfplotfunction{\y}{-20,...,0}{\pgfpointxyz{3*sin(2*\y)}{\y}{3*cos(2*\y)}} 
\pgfplothandlerlineto
\pgfsetplotmarksize{.2ex}
\pgfplothandlermark{\pgfuseplotmark{*}}
\pgfplotfunction{\y}{0,5,...,60}{\pgfpointxyz{3*sin(2*\y)}{\y}{3*cos(2*\y)}} 
\pgfplothandlerlineto
\pgfsetplotmarksize{.1ex}
\pgfplotfunction{\y}{0,5,...,60}{\pgfpointxyz{3*sin(2*\y)}{\y}{3*cos(2*\y)}} 
\pgfusepath{stroke}
\pgfsetdash{{0.2cm}}{0.1cm}
\pgfplothandlerlineto
\pgfsetplotmarksize{.1ex}
\pgfplothandlermark{\pgfuseplotmark{*}}
\pgfplotfunction{\y}{60,65,...,145}{\pgfpointxyz{3*sin(2*\y)}{\y}{3*cos(2*\y)}} 
\pgfplothandlerlineto
\pgfplotfunction{\y}{60,...,145}{\pgfpointxyz{3*sin(2*\y)}{\y}{3*cos(2*\y)}} 
\pgfusepath{stroke}
\pgfsetdash{}{0.1cm}
\pgfplothandlerlineto
\pgfsetplotmarksize{.2ex}
\pgfplothandlermark{\pgfuseplotmark{*}}
\pgfplotfunction{\y}{145,150,...,240}{\pgfpointxyz{3*sin(2*\y)}{\y}{3*cos(2*\y)}} 
\pgfplothandlerlineto
\pgfsetplotmarksize{.1ex}
\pgfplotfunction{\y}{145,150,...,240}{\pgfpointxyz{3*sin(2*\y)}{\y}{3*cos(2*\y)}} 
\pgfusepath{stroke}
\pgfsetdash{{0.2cm}}{0.1cm}
\pgfplothandlerlineto
\pgfsetplotmarksize{.1ex}
\pgfplothandlermark{\pgfuseplotmark{*}}
\pgfplotfunction{\y}{240,245,...,325}{\pgfpointxyz{3*sin(2*\y)}{\y}{3*cos(2*\y)}} 
\pgfplothandlerlineto
\pgfsetdash{{0.2cm}}{0.1cm}
\pgfplotfunction{\y}{240,...,325}{\pgfpointxyz{3*sin(2*\y)}{\y}{3*cos(2*\y)}} 
\pgfusepath{stroke}
\pgfsetdash{}{0.1cm}
\pgfplothandlerlineto
\pgfsetplotmarksize{.2ex}
\pgfplothandlermark{\pgfuseplotmark{*}}
\pgfplotfunction{\y}{325,330,...,420}{\pgfpointxyz{3*sin(2*\y)}{\y}{3*cos(2*\y)}} 
\pgfplothandlerlineto
\pgfsetplotmarksize{.1ex}
\pgfplotfunction{\y}{325,...,420}{\pgfpointxyz{3*sin(2*\y)}{\y}{3*cos(2*\y)}} 
\pgfusepath{stroke}
\pgfsetdash{{0.2cm}}{0.1cm}
\pgfplothandlerlineto
\pgfsetplotmarksize{.1ex}
\pgfplothandlermark{\pgfuseplotmark{*}}
\pgfplotfunction{\y}{420,425,...,505}{\pgfpointxyz{3*sin(2*\y)}{\y}{3*cos(2*\y)}} 
\pgfplothandlerlineto
\pgfsetdash{{0.2cm}}{0.1cm}
\pgfplotfunction{\y}{420,...,505}{\pgfpointxyz{3*sin(2*\y)}{\y}{3*cos(2*\y)}} 
\pgfusepath{stroke}
\pgfsetdash{}{0.1cm}
\pgfplothandlerlineto
\pgfsetplotmarksize{.2ex}
\pgfplothandlermark{\pgfuseplotmark{*}}
\pgfplotfunction{\y}{505,510,...,600}{\pgfpointxyz{3*sin(2*\y)}{\y}{3*cos(2*\y)}} 
\pgfplothandlerlineto
\pgfsetplotmarksize{.1ex}
\pgfplotfunction{\y}{505,...,600}{\pgfpointxyz{3*sin(2*\y)}{\y}{3*cos(2*\y)}} 
\pgfusepath{stroke}
\pgfsetdash{{0.2cm}}{0.1cm}
\pgfplothandlerlineto
\pgfsetplotmarksize{.1ex}
\pgfplothandlermark{\pgfuseplotmark{*}}
\pgfplotfunction{\y}{600,605,...,690}{\pgfpointxyz{3*sin(2*\y)}{\y}{3*cos(2*\y)}} 
\pgfplothandlerlineto
\pgfsetdash{{0.2cm}}{0.1cm}
\pgfplotfunction{\y}{600,...,690}{\pgfpointxyz{3*sin(2*\y)}{\y}{3*cos(2*\y)}} 
\pgfusepath{stroke}
\pgfsetdash{}{0.1cm}
\pgfplothandlerlineto
\pgfsetplotmarksize{.2ex}
\pgfplothandlermark{\pgfuseplotmark{*}}
\pgfplotfunction{\y}{685,690,...,720}{\pgfpointxyz{3*sin(2*\y)}{\y}{3*cos(2*\y)}} 
\pgfplothandlerlineto
\pgfsetplotmarksize{.1ex}
\pgfplotfunction{\y}{685,...,720}{\pgfpointxyz{3*sin(2*\y)}{\y}{3*cos(2*\y)}} 
\pgfusepath{stroke}

\pgfplothandlerlineto
\pgfsetplotmarksize{.05ex}
\pgfplothandlermark{\pgfuseplotmark{*}}
\pgfplotfunction{\y}{720,...,740}{\pgfpointxyz{3*sin(2*\y)}{\y}{3*cos(2*\y)}} 

\pgfsetlinewidth{.4mm}
 \pgfsetdash{{0.2cm}{0.1cm}}{1cm}
 \pgfpathmoveto{\pgfpointxyz{0}{-15}{3}}
\pgfpathlineto{\pgfpointxyz{0}{745}{3}}
\pgfusepath{stroke}
\foreach \d in {0,1,2,3,4}
{\pgftext[at=
\pgfpointlineatdistance{\d*56.9}{\pgfpointxyz{0}{0}{3}}{\pgfpointxyz{0}{720}{3}}]{$\square$}}
\pgftext[at=
\pgfpointlineatdistance{0}{\pgfpointxyz{1.3}{0}{3.5}}{\pgfpointxyz{.9}{720}{3.5}}]{$m-a+b$}
\pgftext[at=
\pgfpointlineatdistance{56.9}{\pgfpointxyz{.9}{0}{3.5}}{\pgfpointxyz{.9}{720}{3.5}}]{$m+b$}
\pgftext[at=
\pgfpointlineatdistance{113.8}{\pgfpointxyz{.9}{0}{3.5}}{\pgfpointxyz{1.4}{720}{3.5}}]{$m+a+b$}
\pgftext[at=
\pgfpointlineatdistance{170.7}{\pgfpointxyz{.9}{0}{3.5}}{\pgfpointxyz{1.4}{720}{3.5}}]{$m+2a+b$}
\pgftext[at=
\pgfpointlineatdistance{227.6}{\pgfpointxyz{.9}{0}{3.5}}{\pgfpointxyz{1.4}{720}{3.5}}]{$m+3a+b$}

{\pgftext[at=
\pgfpointlineatdistance{56.9}{\pgfpointxyz{-1.75}{0}{3}}{\pgfpointxyz{0}{720}{3}}]{$\square$}}
\pgftext[at=
\pgfpointlineatdistance{56.9}{\pgfpointxyz{-1.5}{0}{3.5}}{\pgfpointxyz{.5}{720}{3.5}}]{$m$}

\pgftext[at=
\pgfpointlineatdistance{-15}{\pgfpointxyz{0}{0}{3}}{\pgfpointxyz{0}{720}{3}}]{\ScissorHollowLeft}
\end{tikzpicture}
\end{center}
\caption{The integers on an helix}\label{fig:helix}
\end{figure}

As using the sketch of Figure~\ref{fig:helix} some of the integers would be hidden, we will consider planifications of the cylinder instead. They are usually obtained by cutting the cylinder through a vertical line passing through a point previously chosen. Note that a planification corresponds to taking a partition of the integers.
The reader may think on the letters $m, a,b$ as being $2c-1$, $n_1$ and $n_e-n_1$ for a semigroup generated by $\{n_1<\cdots<n_e\}$ whose conductor is $c$. This will be the case when dealing with semigroups generated by intervals.

We shall use this drawings to depict the most relevant parts of the sets considered. For instance, if we want to highlight the elements of a numerical semigroup, we do not add any information by depicting the points below $0$ and those above the conductor.

The parallelograms in Figure~\ref{fig:sgp9-13-15} highlight the elements of the semigroup $S= \langle 9,13,15\rangle$, and the elements of $60-S$, respectively.
\begin{center}
\begin{figure}[h]
\begin{tikzpicture}
[first/.style={circle,draw=black!80,fill=red!80,thick,
inner sep=0pt,minimum size=4.5000000000mm}
,second/.style={circle,draw=black!50,fill=blue!50,thick, 
inner sep=0pt,minimum size=4.5000000000mm},
intersection/.style={circle,draw=black!80,fill=black!60,thick, 
inner sep=0pt,minimum size=4.5000000000mm}, 
gap/.style={circle,draw=black!40,fill=green!30,thick,
 inner sep=0pt,minimum size=4.5000000000mm}]
\node at (0,3) [first] {\tiny 54};
\node at (0.5000000000,3.0555555555) [first] {\tiny 55};
\node at (1,3.1111111111) [first] {\tiny 56};
\node at (1.5000000000,3.1666666666) [first] {\tiny 57};
\node at (2,3.2222222222) [first] {\tiny 58};
\node at (2.5000000000,3.2777777777) [first] {\tiny 59};
\node at (3,3.3333333333) [first] {\tiny 60};
\node at (3.5000000000,3.3888888888) [first] {\tiny 61};
\node at (4,3.4444444444) [first] {\tiny 62};
\node at (0,2.5000000000) [first] {\tiny 45};
\node at (0.5000000000,2.5555555555) [first] {\tiny 46};
\node at (1,2.6111111111) [gap] {\tiny 47};
\node at (1.5000000000,2.6666666666) [first] {\tiny 48};
\node at (2,2.7222222222) [first] {\tiny 49};
\node at (2.5000000000,2.7777777777) [first] {\tiny 50};
\node at (3,2.8333333333) [first] {\tiny 51};
\node at (3.5000000000,2.8888888888) [first] {\tiny 52};
\node at (4,2.9444444444) [first] {\tiny 53};
\node at (0,2) [first] {\tiny 36};
\node at (0.5000000000,2.0555555555) [first] {\tiny 37};
\node at (1,2.1111111111) [gap] {\tiny 38};
\node at (1.5000000000,2.1666666666) [first] {\tiny 39};
\node at (2,2.2222222222) [first] {\tiny 40};
\node at (2.5000000000,2.2777777777) [first] {\tiny 41};
\node at (3,2.3333333333) [first] {\tiny 42};
\node at (3.5000000000,2.3888888888) [first] {\tiny 43};
\node at (4,2.4444444444) [first] {\tiny 44};
\node at (0,1.5000000000) [first] {\tiny 27};
\node at (0.5000000000,1.5555555555) [first] {\tiny 28};
\node at (1,1.6111111111) [gap] {\tiny 29};
\node at (1.5000000000,1.6666666666) [first] {\tiny 30};
\node at (2,1.7222222222) [first] {\tiny 31};
\node at (2.5000000000,1.7777777777) [gap] {\tiny 32};
\node at (3,1.8333333333) [first] {\tiny 33};
\node at (3.5000000000,1.8888888888) [gap] {\tiny 34};
\node at (4,1.9444444444) [first] {\tiny 35};
\node at (0,1) [first] {\tiny 18};
\node at (0.5000000000,1.0555555555) [gap] {\tiny 19};
\node at (1,1.1111111111) [gap] {\tiny 20};
\node at (1.5000000000,1.1666666666) [gap] {\tiny 21};
\node at (2,1.2222222222) [first] {\tiny 22};
\node at (2.5000000000,1.2777777777) [gap] {\tiny 23};
\node at (3,1.3333333333) [first] {\tiny 24};
\node at (3.5000000000,1.3888888888) [gap] {\tiny 25};
\node at (4,1.4444444444) [first] {\tiny 26};
\node at (0,0.5000000000) [first] {\tiny 9};
\node at (0.5000000000,0.5555555555) [gap] {\tiny 10};
\node at (1,0.6111111111) [gap] {\tiny 11};
\node at (1.5000000000,0.6666666666) [gap] {\tiny 12};
\node at (2,0.7222222222) [first] {\tiny 13};
\node at (2.5000000000,0.7777777777) [gap] {\tiny 14};
\node at (3,0.8333333333) [first] {\tiny 15};
\node at (3.5000000000,0.8888888888) [gap] {\tiny 16};
\node at (4,0.9444444444) [gap] {\tiny 17};
\node at (0,0) [first] {\tiny 0};
\node at (0.5000000000,0.0555555555) [gap] {\tiny 1};
\node at (1,0.1111111111) [gap] {\tiny 2};
\node at (1.5000000000,0.1666666666) [gap] {\tiny 3};
\node at (2,0.2222222222) [gap] {\tiny 4};
\node at (2.5000000000,0.2777777777) [gap] {\tiny 5};
\node at (3,0.3333333333) [gap] {\tiny 6};
\node at (3.5000000000,0.3888888888) [gap] {\tiny 7};
\node at (4,0.4444444444) [gap] {\tiny 8};
\end{tikzpicture}
\quad
\begin{tikzpicture}
[first/.style={circle,draw=black!80,fill=red!80,thick,
inner sep=0pt,minimum size=4.5000000000mm}
,second/.style={circle,draw=black!50,fill=blue!50,thick, 
inner sep=0pt,minimum size=4.5000000000mm},
intersection/.style={circle,draw=black!80,fill=black!60,thick, 
inner sep=0pt,minimum size=4.5000000000mm}, 
gap/.style={circle,draw=black!40,fill=green!30,thick,
 inner sep=0pt,minimum size=4.5000000000mm}]
\node at (0,3) [gap] {\tiny 54};
\node at (0.5000000000,3.0555555555) [gap] {\tiny 55};
\node at (1,3.1111111111) [gap] {\tiny 56};
\node at (1.5000000000,3.1666666666) [gap] {\tiny 57};
\node at (2,3.2222222222) [gap] {\tiny 58};
\node at (2.5000000000,3.2777777777) [gap] {\tiny 59};
\node at (3,3.3333333333) [first] {\tiny 60};
\node at (3.5000000000,3.3888888888) [gap] {\tiny 61};
\node at (4,3.4444444444) [gap] {\tiny 62};
\node at (0,2.5000000000) [first] {\tiny 45};
\node at (0.5000000000,2.5555555555) [gap] {\tiny 46};
\node at (1,2.6111111111) [first] {\tiny 47};
\node at (1.5000000000,2.6666666666) [gap] {\tiny 48};
\node at (2,2.7222222222) [gap] {\tiny 49};
\node at (2.5000000000,2.7777777777) [gap] {\tiny 50};
\node at (3,2.8333333333) [first] {\tiny 51};
\node at (3.5000000000,2.8888888888) [gap] {\tiny 52};
\node at (4,2.9444444444) [gap] {\tiny 53};
\node at (0,2) [first] {\tiny 36};
\node at (0.5000000000,2.0555555555) [gap] {\tiny 37};
\node at (1,2.1111111111) [first] {\tiny 38};
\node at (1.5000000000,2.1666666666) [gap] {\tiny 39};
\node at (2,2.2222222222) [gap] {\tiny 40};
\node at (2.5000000000,2.2777777777) [gap] {\tiny 41};
\node at (3,2.3333333333) [first] {\tiny 42};
\node at (3.5000000000,2.3888888888) [gap] {\tiny 43};
\node at (4,2.4444444444) [gap] {\tiny 44};
\node at (0,1.5000000000) [first] {\tiny 27};
\node at (0.5000000000,1.5555555555) [gap] {\tiny 28};
\node at (1,1.6111111111) [first] {\tiny 29};
\node at (1.5000000000,1.6666666666) [first] {\tiny 30};
\node at (2,1.7222222222) [gap] {\tiny 31};
\node at (2.5000000000,1.7777777777) [first] {\tiny 32};
\node at (3,1.8333333333) [first] {\tiny 33};
\node at (3.5000000000,1.8888888888) [first] {\tiny 34};
\node at (4,1.9444444444) [gap] {\tiny 35};
\node at (0,1) [first] {\tiny 18};
\node at (0.5000000000,1.0555555555) [first] {\tiny 19};
\node at (1,1.1111111111) [first] {\tiny 20};
\node at (1.5000000000,1.1666666666) [first] {\tiny 21};
\node at (2,1.2222222222) [gap] {\tiny 22};
\node at (2.5000000000,1.2777777777) [first] {\tiny 23};
\node at (3,1.3333333333) [first] {\tiny 24};
\node at (3.5000000000,1.3888888888) [first] {\tiny 25};
\node at (4,1.4444444444) [gap] {\tiny 26};
\node at (0,0.5000000000) [first] {\tiny 9};
\node at (0.5000000000,0.5555555555) [first] {\tiny 10};
\node at (1,0.6111111111) [first] {\tiny 11};
\node at (1.5000000000,0.6666666666) [first] {\tiny 12};
\node at (2,0.7222222222) [gap] {\tiny 13};
\node at (2.5000000000,0.7777777777) [first] {\tiny 14};
\node at (3,0.8333333333) [first] {\tiny 15};
\node at (3.5000000000,0.8888888888) [first] {\tiny 16};
\node at (4,0.9444444444) [first] {\tiny 17};
\node at (0,0) [first] {\tiny 0};
\node at (0.5000000000,0.0555555555) [first] {\tiny 1};
\node at (1,0.1111111111) [first] {\tiny 2};
\node at (1.5000000000,0.1666666666) [first] {\tiny 3};
\node at (2,0.2222222222) [first] {\tiny 4};
\node at (2.5000000000,0.2777777777) [first] {\tiny 5};
\node at (3,0.3333333333) [first] {\tiny 6};
\node at (3.5000000000,0.3888888888) [first] {\tiny 7};
\node at (4,0.4444444444) [first] {\tiny 8};
\end{tikzpicture}
\caption{The semigroup $S= \langle 9,13,15\rangle$ and $60-S$, respectively}\label{fig:sgp9-13-15}
\end{figure}
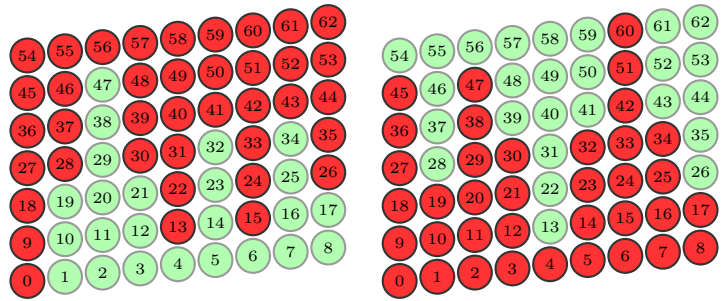
\end{center}
Most times we are interested in finite sets of integers which are non smaller than a given integer $m$. In this case we  prefer to draw all the points from $m$ to $m+a+b$ at the same level. See Figure~\ref{fig:first_amenable} for an example. Its caption will soon become clear. For convenience, the columns are numbered. Having such a picture in mind, we can think on a partition of the set of integers greater than $m$ whose classes are the columns (the $i$th column of a set is the set its elements congruent with $i$ modulo $a$). 
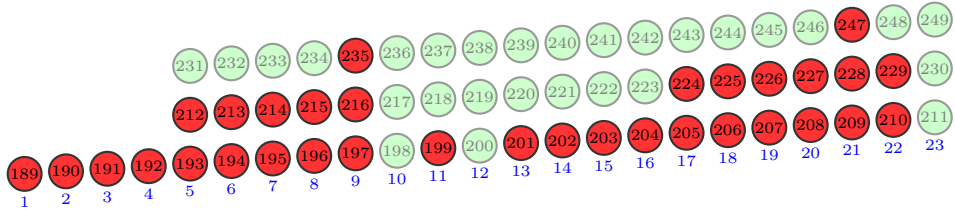
\begin{figure}[h]
\begin{tikzpicture}
[first/.style={circle,draw=black!80,fill=red!80,thick,
inner sep=0pt,minimum size=4.5000000000mm},
trg/.style={diamond,draw=black!80,fill=red!50,thick,
inner sep=0pt,minimum size=4.5000000000mm},
rect/.style={rectangle,draw=black!70,fill=green!40,thick,
inner sep=0pt,minimum size=4.5000000000mm},
wagon/.style={rectangle,draw=black!85,fill=blue!40,thick,
inner sep=0pt,minimum size=4.5000000000mm},
pivot/.style={star,draw=black!80,fill=blue!60,thick,
inner sep=0pt,minimum size=4.5000000000mm},
ground/.style={circle,draw=black!50,fill=blue!30,thick, 
inner sep=0pt,minimum size=4.5000000000mm},
int_triang/.style={diamond,draw=black!80,fill=red!80,thick, 
inner sep=0pt,minimum size=4.5000000000mm}, 
int_rect/.style={rectangle,draw=black!80,fill=black!10!green!90,thick, 
inner sep=0pt,minimum size=4.5000000000mm}, 
int_wagon/.style={rectangle,draw=black!85,fill=blue!80,thick, 
inner sep=0pt,minimum size=4.5000000000mm}, 
gap/.style={circle,draw=black!40,fill=green!20,thick,
 inner sep=0pt,minimum size=4.5000000000mm}]
\node at (2.2000000000,1.4368421052) [gap] {{\color{gray}{\tiny 231}}};
\node at (2.7500000000,1.4710526315) [gap] {{\color{gray}{\tiny 232}}};
\node at (3.3000000000,1.5052631578) [gap] {{\color{gray}{\tiny 233}}};
\node at (3.8500000000,1.5394736842) [gap] {{\color{gray}{\tiny 234}}};
\node at (4.4000000000,1.5736842105) [first] {\tiny 235};
\node at (4.9500000000,1.6078947368) [gap] {{\color{gray}{\tiny 236}}};
\node at (5.5000000000,1.6421052631) [gap] {{\color{gray}{\tiny 237}}};
\node at (6.0500000000,1.6763157894) [gap] {{\color{gray}{\tiny 238}}};
\node at (6.6000000000,1.7105263157) [gap] {{\color{gray}{\tiny 239}}};
\node at (7.1500000000,1.7447368421) [gap] {{\color{gray}{\tiny 240}}};
\node at (7.7000000000,1.7789473684) [gap] {{\color{gray}{\tiny 241}}};
\node at (8.2500000000,1.8131578947) [gap] {{\color{gray}{\tiny 242}}};
\node at (8.8000000000,1.8473684210) [gap] {{\color{gray}{\tiny 243}}};
\node at (9.3500000000,1.8815789473) [gap] {{\color{gray}{\tiny 244}}};
\node at (9.9000000000,1.9157894736) [gap] {{\color{gray}{\tiny 245}}};
\node at (10.4500000000,1.9500000000) [gap] {{\color{gray}{\tiny 246}}};
\node at (11,1.9842105263) [first] {\tiny 247};
\node at (11.5500000000,2.0184210526) [gap] {{\color{gray}{\tiny 248}}};
\node at (12.1000000000,2.0526315789) [gap] {{\color{gray}{\tiny 249}}};
\node at (2.2000000000,0.7868421052) [first] {\tiny 212};
\node at (2.7500000000,0.8210526315) [first] {\tiny 213};
\node at (3.3000000000,0.8552631578) [first] {\tiny 214};
\node at (3.8500000000,0.8894736842) [first] {\tiny 215};
\node at (4.4000000000,0.9236842105) [first] {\tiny 216};
\node at (4.9500000000,0.9578947368) [gap] {{\color{gray}{\tiny 217}}};
\node at (5.5000000000,0.9921052631) [gap] {{\color{gray}{\tiny 218}}};
\node at (6.0500000000,1.0263157894) [gap] {{\color{gray}{\tiny 219}}};
\node at (6.6000000000,1.0605263157) [gap] {{\color{gray}{\tiny 220}}};
\node at (7.1500000000,1.0947368421) [gap] {{\color{gray}{\tiny 221}}};
\node at (7.7000000000,1.1289473684) [gap] {{\color{gray}{\tiny 222}}};
\node at (8.2500000000,1.1631578947) [gap] {{\color{gray}{\tiny 223}}};
\node at (8.8000000000,1.1973684210) [first] {\tiny 224};
\node at (9.3500000000,1.2315789473) [first] {\tiny 225};
\node at (9.9000000000,1.2657894736) [first] {\tiny 226};
\node at (10.4500000000,1.3000000000) [first] {\tiny 227};
\node at (11,1.3342105263) [first] {\tiny 228};
\node at (11.5500000000,1.3684210526) [first] {\tiny 229};
\node at (12.1000000000,1.4026315789) [gap] {{\color{gray}{\tiny 230}}};
\node at (0,0) [first] {\tiny 189};
\node at (0.5500000000,0.0342105263) [first] {\tiny 190};
\node at (1.1000000000,0.0684210526) [first] {\tiny 191};
\node at (1.6500000000,0.1026315789) [first] {\tiny 192};
\node at (2.2000000000,0.1368421052) [first] {\tiny 193};
\node at (2.7500000000,0.1710526315) [first] {\tiny 194};
\node at (3.3000000000,0.2052631578) [first] {\tiny 195};
\node at (3.8500000000,0.2394736842) [first] {\tiny 196};
\node at (4.4000000000,0.2736842105) [first] {\tiny 197};
\node at (4.9500000000,0.3078947368) [gap] {{\color{gray}{\tiny 198}}};
\node at (5.5000000000,0.3421052631) [first] {\tiny 199};
\node at (6.0500000000,0.3763157894) [gap] {{\color{gray}{\tiny 200}}};
\node at (6.6000000000,0.4105263157) [first] {\tiny 201};
\node at (7.1500000000,0.4447368421) [first] {\tiny 202};
\node at (7.7000000000,0.4789473684) [first] {\tiny 203};
\node at (8.2500000000,0.5131578947) [first] {\tiny 204};
\node at (8.8000000000,0.5473684210) [first] {\tiny 205};
\node at (9.3500000000,0.5815789473) [first] {\tiny 206};
\node at (9.9000000000,0.6157894736) [first] {\tiny 207};
\node at (10.4500000000,0.6500000000) [first] {\tiny 208};
\node at (11,0.6842105263) [first] {\tiny 209};
\node at (11.5500000000,0.7184210526) [first] {\tiny 210};
\node at (12.1000000000,0.7526315789) [gap] {{\color{gray}{\tiny 211}}};
\node at (0,0) [below=5pt] {{\color{blue}{\tiny 1}}};
\node at (0.5500000000,0.0342105263) [below=5pt] {{\color{blue}{\tiny 2}}};
\node at (1.1000000000,0.0684210526) [below=5pt] {{\color{blue}{\tiny 3}}};
\node at (1.6500000000,0.1026315789) [below=5pt] {{\color{blue}{\tiny 4}}};
\node at (2.2000000000,0.1368421052) [below=5pt] {{\color{blue}{\tiny 5}}};
\node at (2.7500000000,0.1710526315) [below=5pt] {{\color{blue}{\tiny 6}}};
\node at (3.3000000000,0.2052631578) [below=5pt] {{\color{blue}{\tiny 7}}};
\node at (3.8500000000,0.2394736842) [below=5pt] {{\color{blue}{\tiny 8}}};
\node at (4.4000000000,0.2736842105) [below=5pt] {{\color{blue}{\tiny 9}}};
\node at (4.9500000000,0.3078947368) [below=5pt] {{\color{blue}{\tiny 10}}};
\node at (5.5000000000,0.3421052631) [below=5pt] {{\color{blue}{\tiny 11}}};
\node at (6.0500000000,0.3763157894) [below=5pt] {{\color{blue}{\tiny 12}}};
\node at (6.6000000000,0.4105263157) [below=5pt] {{\color{blue}{\tiny 13}}};
\node at (7.1500000000,0.4447368421) [below=5pt] {{\color{blue}{\tiny 14}}};
\node at (7.7000000000,0.4789473684) [below=5pt] {{\color{blue}{\tiny 15}}};
\node at (8.2500000000,0.5131578947) [below=5pt] {{\color{blue}{\tiny 16}}};
\node at (8.8000000000,0.5473684210) [below=5pt] {{\color{blue}{\tiny 17}}};
\node at (9.3500000000,0.5815789473) [below=5pt] {{\color{blue}{\tiny 18}}};
\node at (9.9000000000,0.6157894736) [below=5pt] {{\color{blue}{\tiny 19}}};
\node at (10.4500000000,0.6500000000) [below=5pt] {{\color{blue}{\tiny 20}}};
\node at (11,0.6842105263) [below=5pt] {{\color{blue}{\tiny 21}}};
\node at (11.5500000000,0.7184210526) [below=5pt] {{\color{blue}{\tiny 22}}};
\node at (12.1000000000,0.7526315789) [below=5pt] {{\color{blue}{\tiny 23}}};
\end{tikzpicture}
\caption{An amenable set}\label{fig:first_amenable}
\end{figure}
\section{A generic algorithm}\label{sec:algorithm_fr}
We shall start the section by giving a quite efficient algorithm to compute the divisors of an element of a numerical semigroup.
The aim is then to find an optimal configuration. Note that if $M$ is wanted to be an optimal configuration, 
we just have to control the cardinality of the difference $\mathrm D(M)\setminus \mathrm D(m)$, for all possible $m\in M$.

Among the optimal configurations there is an amenable set (Proposition~\ref{condiciones-ms}). 
Thus, one can search for an optimal configuration among the amenable sets, 
which can be constructed using Algorithm~\ref{alg:compute_amenables}.
Due to the results in Section~\ref{subsec:ground} (Corollary~\ref{cor:shadow_divisors}, to be more specific), 
one only needs to consider one amenable set for each shadow.

\subsection{Divisors}\label{subsec:divisors}

Recall that given $x\in S$, we say that $\alpha\in S$ divides $x$ if $x-\alpha\in S$. We denote by $\mathrm D(x)$ the set of \emph{divisors} of $x$. 

Note that $\mathrm D(x)\subseteq[0,x]$ and $s\in \mathrm D(x)$ implies $\mathrm D(s)\subseteq \mathrm D(x)$\/.

\begin{lemma}\label{lema:divisors}
$\mathrm D(x)=S\cap (x-S)$.
\end{lemma}
\begin{proof}
Let $\alpha\in \mathrm D(x)$. By definition, $\alpha\in S$ and $x-\alpha\in S$. But then $\alpha= x-(x-\alpha)\in x-S$.

Conversely, let $\alpha\in S$ be such that there exists $\beta\in S$ for which $x-\beta=\alpha$. But then  $x-\alpha=\beta\in S$, proving that $\alpha$ divides $x$.
\end{proof}
We observe that elements greater than $x$ need not to be used to compute the divisors of $x$. Denoting $S_x=\{n\in S\mid n\le x\}$, we get the following:
\begin{corollary}\label{cor:divisors}
$\mathrm D(x)=S_x\cap (x-S_x)$.
\end{corollary}
The computation of the divisors of an element can be easily implemented (Algorithm~\ref{alg:compute_divisors}) due to this consequence of Lemma~\ref{lema:divisors}. Note also that, once we compute the elements of $S$ smaller than $x$ (which can easily be done if the conductor is known), the computation of the divisors is immediate.

\begin{algorithm}[h]\caption{Divisors}
\label{alg:compute_divisors}
\DontPrintSemicolon
\SetKwInOut{Input}{Input}\SetKwInOut{Output}{Output}
\Input{A numerical semigroup $S$, $x\in S$}
\Output{The divisors of $x$}
\BlankLine
\nl $S_x := \{s\in S\mid s\le x\}$\tcc{Compute the elements of $S$ smaller than $x$}
\nl return $\{s\in S_x\mid x-s\in S_x\}$
\end{algorithm}

The highlighted elements in Figure~\ref{fig:divs60sgp9-13-15} represent the divisors of $60\in \langle 9,13,15\rangle$. They are obtained intersecting the highlighted elements of the pictures in Figure~\ref{fig:sgp9-13-15}.
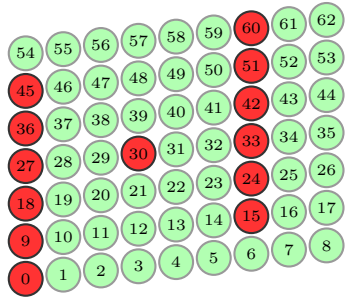
\begin{figure}[h]
\begin{center}
\begin{tikzpicture}
[first/.style={circle,draw=black!80,fill=red!80,thick,
inner sep=0pt,minimum size=4.5000000000mm}
,second/.style={circle,draw=black!50,fill=blue!50,thick, 
inner sep=0pt,minimum size=4.5000000000mm},
intersection/.style={circle,draw=black!80,fill=black!60,thick, 
inner sep=0pt,minimum size=4.5000000000mm}, 
gap/.style={circle,draw=black!40,fill=green!30,thick,
 inner sep=0pt,minimum size=4.5000000000mm}]
\node at (0,3) [gap] {\tiny 54};
\node at (0.5000000000,3.0555555555) [gap] {\tiny 55};
\node at (1,3.1111111111) [gap] {\tiny 56};
\node at (1.5000000000,3.1666666666) [gap] {\tiny 57};
\node at (2,3.2222222222) [gap] {\tiny 58};
\node at (2.5000000000,3.2777777777) [gap] {\tiny 59};
\node at (3,3.3333333333) [first] {\tiny 60};
\node at (3.5000000000,3.3888888888) [gap] {\tiny 61};
\node at (4,3.4444444444) [gap] {\tiny 62};
\node at (0,2.5000000000) [first] {\tiny 45};
\node at (0.5000000000,2.5555555555) [gap] {\tiny 46};
\node at (1,2.6111111111) [gap] {\tiny 47};
\node at (1.5000000000,2.6666666666) [gap] {\tiny 48};
\node at (2,2.7222222222) [gap] {\tiny 49};
\node at (2.5000000000,2.7777777777) [gap] {\tiny 50};
\node at (3,2.8333333333) [first] {\tiny 51};
\node at (3.5000000000,2.8888888888) [gap] {\tiny 52};
\node at (4,2.9444444444) [gap] {\tiny 53};
\node at (0,2) [first] {\tiny 36};
\node at (0.5000000000,2.0555555555) [gap] {\tiny 37};
\node at (1,2.1111111111) [gap] {\tiny 38};
\node at (1.5000000000,2.1666666666) [gap] {\tiny 39};
\node at (2,2.2222222222) [gap] {\tiny 40};
\node at (2.5000000000,2.2777777777) [gap] {\tiny 41};
\node at (3,2.3333333333) [first] {\tiny 42};
\node at (3.5000000000,2.3888888888) [gap] {\tiny 43};
\node at (4,2.4444444444) [gap] {\tiny 44};
\node at (0,1.5000000000) [first] {\tiny 27};
\node at (0.5000000000,1.5555555555) [gap] {\tiny 28};
\node at (1,1.6111111111) [gap] {\tiny 29};
\node at (1.5000000000,1.6666666666) [first] {\tiny 30};
\node at (2,1.7222222222) [gap] {\tiny 31};
\node at (2.5000000000,1.7777777777) [gap] {\tiny 32};
\node at (3,1.8333333333) [first] {\tiny 33};
\node at (3.5000000000,1.8888888888) [gap] {\tiny 34};
\node at (4,1.9444444444) [gap] {\tiny 35};
\node at (0,1) [first] {\tiny 18};
\node at (0.5000000000,1.0555555555) [gap] {\tiny 19};
\node at (1,1.1111111111) [gap] {\tiny 20};
\node at (1.5000000000,1.1666666666) [gap] {\tiny 21};
\node at (2,1.2222222222) [gap] {\tiny 22};
\node at (2.5000000000,1.2777777777) [gap] {\tiny 23};
\node at (3,1.3333333333) [first] {\tiny 24};
\node at (3.5000000000,1.3888888888) [gap] {\tiny 25};
\node at (4,1.4444444444) [gap] {\tiny 26};
\node at (0,0.5000000000) [first] {\tiny 9};
\node at (0.5000000000,0.5555555555) [gap] {\tiny 10};
\node at (1,0.6111111111) [gap] {\tiny 11};
\node at (1.5000000000,0.6666666666) [gap] {\tiny 12};
\node at (2,0.7222222222) [gap] {\tiny 13};
\node at (2.5000000000,0.7777777777) [gap] {\tiny 14};
\node at (3,0.8333333333) [first] {\tiny 15};
\node at (3.5000000000,0.8888888888) [gap] {\tiny 16};
\node at (4,0.9444444444) [gap] {\tiny 17};
\node at (0,0) [first] {\tiny 0};
\node at (0.5000000000,0.0555555555) [gap] {\tiny 1};
\node at (1,0.1111111111) [gap] {\tiny 2};
\node at (1.5000000000,0.1666666666) [gap] {\tiny 3};
\node at (2,0.2222222222) [gap] {\tiny 4};
\node at (2.5000000000,0.2777777777) [gap] {\tiny 5};
\node at (3,0.3333333333) [gap] {\tiny 6};
\node at (3.5000000000,0.3888888888) [gap] {\tiny 7};
\node at (4,0.4444444444) [gap] {\tiny 8};
\end{tikzpicture}

\end{center}
\caption{The divisors of $60$ in the semigroup $S= \langle 9,13,15\rangle$}\label{fig:divs60sgp9-13-15}
\end{figure}

Another immediate consequence of Lemma~\ref{lema:divisors}, which has interest in concrete implementations, is the following corollary: 
\begin{corollary}\label{cor:big_divisors}
If $c\le x\le y$, then $\mathrm D(y)\cap [x,\infty)= (y-S) \cap [x,\infty)$.
\end{corollary}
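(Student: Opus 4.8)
The plan is to deduce this directly from Lemma~\ref{lema:divisors}, namely $\mathrm D(y)=S\cap(y-S)$, by showing that the factor $S$ is automatically satisfied once we intersect with $[x,\infty)$ and use $c\le x$. The one-line idea is that every integer $\ge c$ lies in $S$, so $S\cap[x,\infty)=[x,\infty)$ when $x\ge c$.

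First I would take an element $z\in\mathrm D(y)\cap[x,\infty)$. By Lemma~\ref{lema:divisors}, $z\in S\cap(y-S)$, so in particular $z\in y-S$, giving $z\in(y-S)\cap[x,\infty)$; this is the trivial inclusion and uses nothing about $c$. For the reverse inclusion, take $z\in(y-S)\cap[x,\infty)$, so $z\ge x\ge c$ and $y-z\in S$. Since $z\ge c$ and $c$ is the conductor, $z\in S$ as well. Hence $z\in S\cap(y-S)=\mathrm D(y)$ by Lemma~\ref{lema:divisors}, and of course $z\in[x,\infty)$, so $z\in\mathrm D(y)\cap[x,\infty)$. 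Combining the two inclusions yields the claimed equality. The hypothesis $x\le y$ is not even strictly needed for the set-theoretic identity, though it is the natural regime (otherwise $\mathrm D(y)\cap[x,\infty)$ is empty or just $\{y\}$ depending on conventions); I would keep it in the statement for context but note it plays no role in the argument.

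There is essentially no obstacle here: the only thing to be careful about is invoking the defining property of the conductor ($c+l\in S$ for all $l\in\N$, and in particular every integer $\ge c$ belongs to $S$) at the right moment, and citing Lemma~\ref{lema:divisors} rather than re-deriving $\mathrm D(y)=S\cap(y-S)$. The corollary is of the "immediate consequence" type, so the proof is a two-inclusion unwinding of definitions; the content is purely the observation that membership in $S$ is free above the conductor.
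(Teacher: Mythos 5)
Your proof is correct and is precisely the intended ``immediate consequence'' argument: combine Lemma~\ref{lema:divisors} ($\mathrm D(y)=S\cap(y-S)$) with the fact that every integer $\ge c$ lies in $S$, so that intersecting with $[x,\infty)$ for $x\ge c$ makes the factor $S$ automatic. The paper gives no separate proof for this corollary, and your two-inclusion unwinding matches the route it implicitly takes.
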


We remember that 
\[
\mathrm D(m_1,\ldots,m_r)=\mathrm D(m_1)\cup\cdots\cup \mathrm D(m_r)=
\{p\in S\;|\;m_{i}-p\in S\;\;\mbox{for some } i\in\{1,\ldots,r\}\}
\]

The highlighted elements in Figure~\ref{fig:first_amenable} are the elements of $\mathrm D(235,199, 247, 229)$  which are greater than $189$, when $S$ is the semigroup $\langle 19,20,21,22,23\rangle$.

Observe that $x-S$ contains all the integers not greater than $x-c$ and that the number of integers smaller than $x$ not belonging to $x-S$ is precisely the genus of $S$. As the number of non-negative integers not greater than $x$ is $x+1$, one gets immediately the well known fact (see \cite{HvLP}, \cite{KirPel} or \cite{WSPink}): 
\begin{proposition}\label{prop:well-known}
If $x\ge 2c-1$, then $\sharp\mathrm D(x)=\sharp S\cap (x-S)=x+1-2g$.
\end{proposition}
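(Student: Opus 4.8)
The plan is to count, for $x \ge 2c-1$, the integers in $[0,x]$ that fail to lie in $x-S$, and to recognize that this count is exactly the genus $g$. By Lemma~\ref{lema:divisors}, $\mathrm D(x) = S \cap (x-S)$, and since $\mathrm D(x) \subseteq [0,x]$ we may intersect everything with $[0,x]$. So I would first show $\sharp\mathrm D(x) = \sharp\big( [0,x]\cap S \cap (x-S)\big)$, and then compute the right-hand side by an inclusion-exclusion over the finite set $[0,x]\cap\N$, which has $x+1$ elements.

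The key step is to decompose $[0,x]\cap\N$ according to membership in $S$ and in $x-S$. Note that for $y \in [0,x]\cap\N$, we have $y \in x-S$ precisely when $x-y \in S$. So the two "bad" conditions are $y \notin S$ and $x-y\notin S$; the former happens for exactly $g$ values of $y$ (the gaps), and the latter also for exactly $g$ values of $y$ (namely $y = x-n$ with $n$ a gap, all of which lie in $[0,x]$ because gaps are non-negative and at most $c-1 < x$). I would then argue these two bad sets are disjoint: if $y\notin S$ then $y \le c-1$, hence $x-y \ge x-(c-1) \ge (2c-1)-(c-1) = c$, so $x-y \in S$, meaning $y$ cannot be bad for the second reason. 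Therefore the number of $y\in[0,x]$ with $y\in S$ \emph{and} $x-y\in S$ is $(x+1) - g - g = x+1-2g$, which is exactly $\sharp\mathrm D(x)$.

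The only mild obstacle is bookkeeping around the endpoints: one must check that all $g$ gaps of $S$, and all their "reflections" $x-n$, genuinely fall inside the window $[0,x]$, and that no double-counting occurs — but this is precisely what the disjointness argument above handles, using the hypothesis $x \ge 2c-1$ together with the standard bound that every gap is at most $c-1$. I would state the disjointness as the crux and the rest follows by the counting identity. Finally, since by Proposition~\ref{prop:well-known}'s own statement $\sharp S\cap(x-S)$ is meant as $\sharp\big([0,x]\cap S\cap(x-S)\big)$ (the elements of $x-S$ relevant to divisors all lie in $[0,x]$), the two displayed equalities coincide and the proof is complete.
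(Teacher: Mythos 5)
Your proof is correct and is essentially the same argument the paper gives, just written out in full: count the $x+1$ integers in $[0,x]$, subtract the $g$ gaps (which fail $y\in S$) and the $g$ reflected gaps $x-n$ (which fail $x-y\in S$), and observe that the hypothesis $x\ge 2c-1$ together with the bound "gaps are $\le c-1$" forces these two bad sets to be disjoint. The paper compresses this into two sentences preceding the proposition, but the decomposition and the role of $x\ge 2c-1$ are identical.
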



\subsection{Amenable sets}\label{subsec:amenable}\begin{defn}\label{def:amenable}
Let $S$ be a numerical semigroup with conductor $c$. 
Let $M = \{m_1,\ldots,m_r\}\subseteq S$ with $2c-1 \le m = m_1<\cdots<m_r$. 
We say that the set $M$ is $(S,m,r)$-\emph{amenable} if: 
\begin{equation}\label{eq:def_amenable}
 \mbox{for all }i\in\{1,\ldots,r\},\mathrm D(m_i)\cap [m,\infty)\subseteq M.
\end{equation}
\end{defn}
We will refer a set satisfying (\ref{eq:def_amenable}) as being $m$-\emph{closed under division}. So, a subset of $S\cap[m,\infty)$ with cardinality $r$ is $(S,m,r)$-amenable if and only if it contains $m$ and is $m$-closed under division.

As a convention, the empty set is considered an $(S,m,0)$-amenable set, for any $m$.
When no confusion arises or only the concept is important, we say $(m,r)$-\emph{amenable} set or simply \emph{amenable} set.
\begin{example}
\begin{enumerate}
\item
Let $S=\langle 19, 20, 21, 22, 23 \rangle$. Its conductor is $c= 95$. Take $m=2c-1=189$. The set $M$ consisting of the highlighted elements in Figure~\ref{fig:first_amenable} is an amenable subset of $S$. 
\item
Let $S$ be a numerical semigroup with conductor $c$. Let $m\geq 2c-1$, and $r$ a non negative integer. Then the interval $[ m, m+r-1]\cap\N$ is a $(S,m,r)$-amenable set.
\end{enumerate}
\end{example}

The importance of amenable sets comes from the following result, which states that among the optimal configurations of cardinality $r$ there is at least one $(S,m,r)$-amenable set.

\begin{proposition}\label{condiciones-ms}
Let $S$ be a numerical semigroup with conductor $c$ and let $m\ge 2c-1$. Let $r$ be a positive integer. Among the optimal configurations of cardinality $r$ there is one $(S,m,r)$-amenable set.
\end{proposition}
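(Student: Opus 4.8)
The plan is to start from an arbitrary optimal configuration $M_0$ of cardinality $r$ and transform it, without increasing the number of divisors and without changing the cardinality, into one that is $m$-closed under division and contains $m$. I would proceed by a ``saturation'' argument: given any configuration $M=\{m_1<\cdots<m_r\}$ with $m_1\ge 2c-1$, look at the set $\mathrm{D}(M)\cap[m,\infty)$, where $m$ is the fixed target integer with $m\ge 2c-1$. Since every divisor of an $m_i$ that is $\ge m$ must itself be an element of $S$ lying in $[m, m_r]$, this set is finite, and I would like to take it (or an appropriately sized piece of it) as the new configuration. The key point to check first is the inequality $\sharp\bigl(\mathrm{D}(M)\cap[m,\infty)\bigr)\le r$, because then we can enlarge this set back up to exactly $r$ elements by throwing in arbitrary extra elements of $S$ above $m_r$ (which are $\ge c$, hence contribute no divisors $\ge m$ beyond themselves that matter — more precisely, adding an element $m'\ge m_r$ adds only divisors of $m'$, and we must be careful here).

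Here is where I expect the real obstacle, so let me be more careful about the mechanism. The natural candidate is: among all optimal configurations of cardinality $r$, pick one, call it $M$, that is ``maximal'' in the sense that $\sum_{x\in M} x$ is as large as possible (such a maximum exists because $\sharp\mathrm{D}(M)$ is bounded below by $\delta^r(m)$ and pushing elements up too far eventually only increases divisor counts — one needs the asymptotic formula $\delta^r(m+k)=k+\delta^r(m)$ and the bound $\delta^r_{FR}(m)\ge m+1-2g+\mathrm{E}(S,r)$ for $m\ge c$ to see the relevant quantities are controlled, or alternatively restrict attention to configurations inside a fixed bounded window). Then I claim such an $M$ is $m$-closed under division and contains $m$. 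For the ``contains $m$'' part: if $m\notin M$, then $m<m_1$; I would try to replace some element of $M$ or argue that shifting things cannot have been optimal. For the closure part: suppose $\alpha\in\mathrm{D}(m_i)\cap[m,\infty)$ but $\alpha\notin M$. Then consider the configuration $M'=(M\setminus\{m_j\})\cup\{\alpha'\}$ for a suitable choice, where $\alpha'$ is taken large; the point is that $\mathrm{D}(\alpha)\subseteq\mathrm{D}(m_i)\subseteq\mathrm{D}(M)$, so removing... — actually the cleaner route is: replace the \emph{smallest} element not forced to be there by a large new element at the top, using $\mathrm{D}(\alpha)\subseteq\mathrm{D}(m_i)$ to control the loss.

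Let me commit to the cleanest version. I would argue by downward induction / exchange as follows. Let $M$ be an optimal configuration of cardinality $r$ chosen so that $\sum_{x\in M}x$ is maximal among all optimal configurations of cardinality $r$ contained in $[m, N]$ for $N$ large enough to contain at least one optimal configuration (such $N$ exists, e.g. $N = m+r-1$ works since the interval $[m,m+r-1]$ is a configuration and one checks via Proposition~\ref{prop:well-known} and the asymptotic formula that at least one optimal configuration fits in a bounded window). First, $m\in M$: otherwise every element of $M$ exceeds $m$, and replacing the largest element $m_r$ by $m_r+1$ keeps cardinality $r$ and, since $m_r\ge c$, one has $\mathrm{D}(m_r+1)\cap[m,\infty)=(m_r+1-S)\cap[m,\infty)$ by Corollary~\ref{cor:big_divisors}, from which $\sharp\mathrm{D}(M')\le\sharp\mathrm{D}(M)+1$; comparing with $\delta^{r}$ at the two base points and using $\delta^r(m+1)=\delta^r(m)+1$ forces equality, contradicting maximality of the sum. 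Next, $m$-closure: if $\alpha\in\mathrm{D}(m_i)\cap[m,\infty)\setminus M$, then $\mathrm{D}(\alpha)\subseteq\mathrm{D}(m_i)\subseteq\mathrm{D}(M)$, so the set $M''=(M\setminus\{m_r\})\cup\{\alpha,\, m_r\}$ — wait, that has $r+1$ elements; instead form $M''=(M\cup\{\alpha\})\setminus\{\text{some }m_j\}$ where we delete an element $m_j$ whose divisors $\ge m$ are already covered by the others; such a $j$ exists precisely because $\sharp(\mathrm{D}(M)\cap[m,\infty))\le r$ would otherwise be violated. Then $\mathrm{D}(M'')\supseteq\mathrm{D}(M''\text{'s divisors})$ and one checks $\sharp\mathrm{D}(M'')\le\sharp\mathrm{D}(M)$ while $\sum_{x\in M''}x>\sum_{x\in M}x$ if $\alpha>m_j$ — and if $\alpha<m_j$ one handles it by simultaneously pushing up a top element. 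The technical heart, and the step I expect to cost the most care, is the bookkeeping that guarantees the deleted element is genuinely redundant and that the divisor count does not go up; this rests on the elementary but crucial observation that for $x\ge c$ the divisors of $x$ lying above $m$ are governed by $x-S$ (Corollary~\ref{cor:big_divisors} and Lemma~\ref{lema:divisors}), so that the ``high'' parts of $\mathrm{D}$ behave additively and the exchange is transparent.
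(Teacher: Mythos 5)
The paper's proof is a short pull-down argument: start from any optimal configuration $M=\{m_1<\cdots<m_r\}$; $m_1=m$ follows from strict monotonicity of $\delta^r$; and if $M$ is not $m$-closed, pick $m_i$ with a divisor $\alpha=m_i-t\ge m$, $\alpha\notin M$, and simply \emph{replace} $m_i$ by $\alpha$. Since $\mathrm D(\alpha)\subseteq\mathrm D(m_i)$, the divisor count cannot increase, so the new set is still optimal, and termination is forced by the descending chain condition (the sum $\sum x$ strictly drops). You have the right raw ingredient --- you explicitly note $\mathrm D(\alpha)\subseteq\mathrm D(m_i)$ --- but you never take the one obvious step of using it to \emph{substitute} $\alpha$ for $m_i$. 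Instead you try to \emph{add} $\alpha$ and then delete some ``redundant'' $m_j$, and this is where the gap lies.

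Concretely, the claim that a redundant $m_j$ exists, justified by ``otherwise $\sharp(\mathrm D(M)\cap[m,\infty))\le r$ would be violated,'' does not hold up. Each $m_i$ is a divisor of itself, so $M\subseteq\mathrm D(M)\cap[m,\infty)$ always, and the moment you have an extra divisor $\alpha\ge m$ not in $M$ (i.e.\ the moment $M$ fails to be amenable) you get $\sharp(\mathrm D(M)\cap[m,\infty))>r$, not $\le r$. So the inequality you lean on fails in exactly the case you need it, and there is in general no element $m_j\in M$ whose removal leaves $\mathrm D(M)$ unchanged: removing $m_j$ loses at least $m_j$ itself unless $m_j$ happens to divide another $m_k$, which need not be so. Your treatment of ``$m\in M$'' is also more convoluted than it needs to be; the clean argument is that if $m<m_1$, then $M$ witnesses $\delta^r(m_1)$, hence $\delta^r(m)=\sharp\mathrm D(M)\ge\delta^r(m_1)>\delta^r(m)$, a contradiction, with no appeal to Corollary~\ref{cor:big_divisors} or to maximizing $\sum x$. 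In short: you picked the wrong extremal direction (maximizing the sum forces the unworkable exchange), whereas pulling elements down and letting the sum decrease gives both the divisor estimate and termination for free.
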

\begin{proof}
Let $M=\{m_1,\ldots,m_r\}$ be an optimal configuration. As $m\geq 2c-1$, $\delta^r(m)$ is strictly increasing in $m$,
and thus $m$ cannot be less than $m_{1}$, which implies that $m_1=m$.

If $M$ is not $m$-closed under division, we may assume that for some $i\in \{1,\ldots,r\}$ there exists $t\in S$ such that $m_i-t > m$ and $m_i-t\not \in \{m_1,\ldots,m_r\}$. Clearly $\mathrm D(m_i-t)\subset \mathrm D(m_i)$, and thus $\mathrm D(m_1,\ldots,m_{i-1},m_i-t,m_{i+1},\ldots,m_r)\subseteq \mathrm D(m_1,\ldots,m_r)$. In other words, we can change $m_i$ by $m_i-t$ and the number of divisors does not increase.
Now we can repeat the process with the set obtained until we reach a $m$-closed under division set. Note that this must happen in a finite number of steps ($\N^r$ has no infinite descending chains).
\end{proof}
The definition of amenable set, which seems to be suitable for proofs, does not seem to help very much to do computations unless we can prove some consequences. The following one, showing that the distances between elements is somehow controlled, guarantees that the search of the amenable sets can be done in a bounded subset of $S$, and therefore amenable sets can be effectively computed. An algorithm will be presented (Algorithm \ref{alg:compute_amenables}).

\begin{proposition}\label{prop:amenable_first_consequences}
 Let $S$ be a numerical semigroup with conductor $c$ and let $m\ge 2c-1$.
 Let $M=\{m_1,\ldots,m_r\}\subseteq S$ be an $(S,m,r)$-amenable set and suppose that $S=\{0=\rho_1<\rho_2<\cdots\}$. Then
\begin{enumerate}[(a)]
\item $m_i\leq m+\rho_i$, for all $i\in\{1,\ldots,r\}$,
\item $m_{i+1}-m_i\leq \rho_2$, for all $i\in\{1,\ldots,r-1\}$. 
\end{enumerate}
\end{proposition}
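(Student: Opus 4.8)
The plan is to establish both inequalities by relating the amenable set $M$ to the divisor structure of its elements, exploiting the fact that $M$ is $m$-closed under division and that $m\ge 2c-1$, so that the divisor counts from Proposition~\ref{prop:well-known} are available. First I would prove~(a) by induction on $i$. For the base case, $m_1=m=m+\rho_1$ since $\rho_1=0$. For the inductive step, suppose $m_j\le m+\rho_j$ for all $j<i$; I want to show $m_i\le m+\rho_i$. The key observation is that every divisor of $m_i$ lying in $[m,\infty)$ belongs to $M$, by~(\ref{eq:def_amenable}). So I would count $\sharp(\mathrm D(m_i)\cap[m,\infty))$ from two sides: on the one hand, since $m_i\ge m\ge 2c-1\ge c$, Corollary~\ref{cor:big_divisors} gives $\mathrm D(m_i)\cap[m,\infty)=(m_i-S)\cap[m,\infty)$, whose cardinality is the number of elements $s\in S$ with $m_i-s\ge m$, i.e.\ $s\le m_i-m$; since $m_i-m\le \rho_i < c$ is not automatic, I instead count directly: $(m_i-S)\cap[m,\infty)$ has exactly as many elements as $S\cap[0,m_i-m]$, and because $m_i-m$ will turn out to be small relative to the genus this equals (number of non-gaps up to $m_i-m$). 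On the other hand, this set is contained in $\{m_1,\dots,m_r\}\cap[m,m_i]=\{m_1,\dots,m_i\}$ (using that the $m_j$ are increasing), so it has at most $i$ elements. Comparing, $\sharp\bigl(S\cap[0,m_i-m]\bigr)\le i$. Since $S\cap[0,\rho_i]=\{\rho_1,\dots,\rho_i\}$ has exactly $i$ elements and one more than that for any larger bound, this forces $m_i-m\le\rho_i$, which is~(a).

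For part~(b), I would deduce it from~(a) together with a local version of the same counting argument. Fix $i\in\{1,\dots,r-1\}$; I want $m_{i+1}-m_i\le\rho_2$. Consider the divisors of $m_{i+1}$ in the interval $[m_i,\infty)$: again by Corollary~\ref{cor:big_divisors} (valid since $m_i\ge c$), these are exactly the elements $m_{i+1}-s$ with $s\in S$, $m_{i+1}-s\ge m_i$, i.e.\ $s\le m_{i+1}-m_i$. Now $m_{i+1}-m_i\ge 1$, so if $m_{i+1}-m_i\ge\rho_2$ then $S\cap[0,m_{i+1}-m_i]$ contains at least $\{0,\rho_2\}$, hence at least two elements, giving at least two divisors of $m_{i+1}$ in $[m_i,\infty)$ — namely $m_{i+1}$ itself and $m_{i+1}-\rho_2$. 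But both of these lie in $[m,\infty)$ (since $m_i\ge m$), so by~(\ref{eq:def_amenable}) both belong to $M$; being in $[m_i,m_{i+1}]$ and $M$ being increasing, the only members of $M$ in that range are $m_i$ and $m_{i+1}$. Thus $m_{i+1}-\rho_2\in\{m_i,m_{i+1}\}$, and since $\rho_2\ge 1$ it cannot equal $m_{i+1}$, so $m_{i+1}-\rho_2=m_i$, i.e.\ $m_{i+1}-m_i=\rho_2\le\rho_2$. If instead $m_{i+1}-m_i<\rho_2$ there is nothing to prove. This yields~(b).

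The main obstacle I anticipate is making the counting in~(a) fully rigorous: one must be careful that the set $(m_i-S)\cap[m,\infty)$ is indeed in bijection with $S\cap[0,m_i-m]$ via $s\mapsto m_i-s$ (this is immediate), and then that $\sharp\bigl(S\cap[0,n]\bigr)$ as a function of $n$ is strictly increasing exactly at non-gaps, so that $\sharp\bigl(S\cap[0,n]\bigr)\le i$ combined with the fact that $\rho_1<\dots<\rho_i$ are the first $i$ non-gaps forces $n\le\rho_i$. A clean way to phrase this is: $\sharp\bigl(S\cap[0,n]\bigr)\ge \sharp\{j : \rho_j\le n\}$, and if $n\ge\rho_{i+1}$ this is $\ge i+1$; contrapositively $\sharp\bigl(S\cap[0,n]\bigr)\le i\Rightarrow n<\rho_{i+1}\Rightarrow n\le\rho_{i+1}-1$, and one checks $m_i-m$ is a non-gap or not as needed — actually, since we only need $m_i\le m+\rho_i$ and $m_i-m$ need not be a non-gap, it is cleanest to argue $\sharp\bigl(S\cap[0,\,m_i-m]\bigr)\le i$ implies $m_i-m\le\rho_i$ because $S\cap[0,\rho_i]$ already has $i$ elements and any strictly larger value of the bound that is itself a non-gap would add one more; and $m_i-m$ \emph{is} a non-gap, being a difference of elements of $S$ only if $m_i-m\in S$, which holds as $m\ge c$. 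I would spell out this last point rather than leave it implicit.
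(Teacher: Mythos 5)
Your argument for part (b) is correct and essentially the same as the paper's.

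Part (a), however, contains a genuine gap that you acknowledge but do not correctly resolve. Your double count compares $\sharp\bigl(\mathrm D(m_i)\cap[m,\infty)\bigr)\le i$ with $\sharp\bigl(S\cap[0,m_i-m]\bigr)$ via $s\mapsto m_i-s$, which only forces $m_i-m<\rho_{i+1}$, i.e.\ $m_i\le m+\rho_{i+1}-1$; this is strictly weaker than $m_i\le m+\rho_i$ whenever $\rho_{i+1}>\rho_i+1$. You then try to close the gap by asserting that $m_i-m\in S$ \lq\lq as $m\ge c$\rq\rq, but this is simply false: amenability does not force $m_i-m$ to be a non-gap. For instance, let $S=\langle 3,5\rangle$ (so $c=8$, $\rho_2=3$, $\rho_3=5$), $m=2c-1=15$, and $M=\{15,17\}$. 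One checks $\mathrm D(15)\cap[15,\infty)=\{15\}$ and $\mathrm D(17)\cap[15,\infty)=\{17\}$, so $M$ is $(S,15,2)$-amenable, yet $m_2-m=2\notin S$. (Of course (a) still holds here, $17\le 15+\rho_2=18$; the point is only that your justification fails.) The repair is small but essential: replace the closed ray $[m,\infty)$ by the open ray $(m,\infty)$. Then $\mathrm D(m_i)\cap(m,\infty)\subseteq\{m_2,\ldots,m_i\}$ has at most $i-1$ elements, and the same bijection identifies this set with $S\cap[0,m_i-m-1]$. If $m_i-m>\rho_i$, then $S\cap[0,m_i-m-1]\supseteq\{\rho_1,\ldots,\rho_i\}$ has at least $i$ elements, a contradiction, so $m_i\le m+\rho_i$. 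This is precisely the paper's proof, which exhibits the $i_0$ divisors $\{m_{i_0}-\rho_1,\ldots,m_{i_0}-\rho_{i_0}\}$ as lying strictly above $m$ under the assumption $m_{i_0}-\rho_{i_0}>m$, yet sitting inside the $i_0-1$ slots $\{m_2,\ldots,m_{i_0}\}$.
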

\begin{proof}
  (a) 
  Suppose that there exists $i_0\in\{1,\ldots,r\}$ such that $m_{i_0}-\rho_{i_0} > m$. Let $D=\left\{m_{i_0}-\rho_j \mid j\in\{1,\ldots,{i_0}\}\right\}$. All the elements of $D$ are bigger than $m$, that is, $D\subseteq (m,\infty)$.
  On the other hand, by using Lemma~\ref{lema:divisors},  
  $D\subseteq \mathrm D(m_{i_0})$. Thus 
  $D\subseteq\mathrm D(m_{i_0})\cap (m,\infty)\subsetneqq \{m_1,\ldots,m_{i_0}\}$. The containment is strict since $m_1=m$. But this is absurd, since the two ends of the chain have the same cardinality.
  
  (b) Note that $m_{i+1} - \rho_2$ is a divisor of $m_{i+1}$. This implies that, if $m_{i+1} - \rho_2\ge m$, then  $m_{i+1} - \rho_2\in M$. As $m_{i+1} - \rho_2 < m_{i+1}$ and there is no element in $M$ strictly between $m_i$ and $m_{i+1}$, $m_{i+1}- \rho_2$ must be non greater than $m_i$.
\end{proof}
For efficiency reasons, the following result is important. It shows that we do not have to consider all divisors. 

\begin{proposition} \label{prop:amenable_def_generators}
A subset $M=\{m=m_1,\ldots,m_r\}$ of a numerical semigroup $S$  is $(S,m,r)$-amenable if and only if 
\begin{equation}\label{eq:def_amenable_generators}
\begin{array}{l}
 \mbox{ for all } i\in\{1,\ldots,r\} \mbox{ and } g \mbox{ minimal generator of } S,\\ 
 \mbox{ if } m_i-g\ge m, \mbox{ then } m_i-g\in  \{m_1,\ldots,m_r\}.
\end{array}
\end{equation}
\end{proposition}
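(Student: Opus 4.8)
The plan is to establish the two implications separately, with essentially all the work in the ``if'' direction. The ``only if'' direction is immediate: every minimal generator $g$ of $S$ with $m_i-g\in S$ belongs to $\mathrm D(m_i)$ by Definition~\ref{def:division}, so if $M$ is $(S,m,r)$-amenable then condition (\ref{eq:def_amenable}) applied to such $g$ gives precisely (\ref{eq:def_amenable_generators}).

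For the converse, assume $M=\{m=m_1,\ldots,m_r\}$ satisfies (\ref{eq:def_amenable_generators}); I must show that $M$ is $m$-closed under division, i.e.\ that $\mathrm D(m_i)\cap[m,\infty)\subseteq M$ for every $i\in\{1,\ldots,r\}$. Fix such an $i$ and take $\alpha\in\mathrm D(m_i)$ with $\alpha\ge m$, so that $m_i-\alpha\in S$. If $m_i-\alpha=0$ then $\alpha=m_i\in M$ and there is nothing to prove; otherwise $m_i-\alpha$ is a nonempty sum of minimal generators, say $m_i-\alpha=g_1+\cdots+g_k$ with each $g_j$ a minimal generator of $S$.

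The key step is a finite descent along this decomposition. Set $p_0:=m_i$ and $p_j:=p_{j-1}-g_j=m_i-(g_1+\cdots+g_j)$ for $1\le j\le k$, so $p_k=\alpha$. For every $j$ we have $p_j=\alpha+(g_{j+1}+\cdots+g_k)\ge\alpha\ge m$. I claim $p_j\in M$ for all $j$, by induction on $j$: the base case $p_0=m_i\in M$ is clear, and if $p_{j-1}\in M$ then, since $p_{j-1}-g_j=p_j\ge m$, hypothesis (\ref{eq:def_amenable_generators}) applied to the element $p_{j-1}\in M$ and the minimal generator $g_j$ gives $p_j\in M$. Hence $\alpha=p_k\in M$, which is what we wanted. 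I do not expect a real obstacle here; the only points requiring care are to treat the degenerate case $\alpha=m_i$ separately so that a genuine decomposition into generators is available, and to observe that each partial remainder $p_j$ stays $\ge\alpha\ge m$, which is exactly what licenses re-applying the hypothesis at every step of the descent.
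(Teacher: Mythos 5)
Your proof is correct and takes essentially the same approach as the paper: both argue by stripping off one minimal generator at a time while observing that every intermediate element stays $\ge m$, so that hypothesis (\ref{eq:def_amenable_generators}) can be re-applied at each step. The only cosmetic difference is that you fix a decomposition $m_i-\alpha=g_1+\cdots+g_k$ up front and do a forward induction on $j$, whereas the paper runs a strong induction on $\gamma=m_i-u$ and peels off a generator dividing $\gamma$ in the inductive step; the underlying mechanism is the same.
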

\begin{proof}
 Let $m_i\in M$ and $u \in \mathrm D(m_i)\cap [m,\infty)$, with $u\ne m_i$. We shall prove that if (\ref{eq:def_amenable_generators}) holds, then $u\in M$, thus concluding that $M$ is $(S,m,r)$-amenable.
 We can write $u=m_i-\gamma$, with $\gamma \in S\setminus \{0\}$. 
 Assume as induction hypothesis that $m_i-\alpha\in\mathrm D(m_i)\cap [m,\infty)$ implies $m_i-\alpha\in M$, for all $\alpha$ less than $\gamma$. Let $g$ be a minimal generator that divides $\gamma$. As $\gamma-g<\gamma$, and $m_i-(\gamma-g)=m_i-\gamma+g\in \mathrm D(m_i)\cap [m,\infty)$, we have, by hypothesis, that $m_i-\gamma+g\in M$. But then, by (\ref{eq:def_amenable_generators}), $m_i-\gamma=(m_i-\gamma+g)-g\in M$.
\end{proof}

Propositions~\ref{prop:amenable_first_consequences} and \ref{prop:amenable_def_generators} led to an algorithm to compute the set of $(S,m,r)$-amenable sets. Pseudo-code is presented in Algorithm~\ref{alg:compute_amenables}~.

\begin{algorithm}\caption{$(S,m,r)$-amenable sets}
\label{alg:compute_amenables}
\DontPrintSemicolon
\SetKwInOut{Input}{Input}\SetKwInOut{Output}{Output}
\Input{A numerical semigroup $S$, $m\ge 2c-1$ and $r$ an integer}
\Output{The set of $(S,m,r)$-amenable sets}
\BlankLine

$SM := [[m]]$\tcc{the set of amenable sets}
Compute the generators $gens=\{n_1<\ldots < n_e\}$ and the elements $\{0=\rho_1<\rho_2<\ldots\}$ of $S$\;
\nl    \For{$i$ in $[2..r]$}{
        $newM := [\,]$\;
\nl        \For{$x$ in $SM$}{
            $min := Minimum(x[Length(x)]+\rho_2,m+\rho_i)$\tcc{the consequences in Proposition~\ref{prop:amenable_first_consequences} should be satisfied: the next element to be added must not be greater than the last + rho2 neither m+el[i]}
\nl\label{line:algo_amenable_for}            \For{$m_j$ in $[x[Length(x)]+1..min]$}{
\nl\label{line:algo_amenable_gens}                $divs := \{d\in m_j - gens\mid d>m\}$ \tcc{strict divisors of $m_j$ greater than m}
\nl\label{line:algo_amenable_if_condiii}           \If{$divs \subseteq x$}{\tcc{in order to get condition (\ref{eq:def_amenable_generators}) of Proposition~\ref{prop:amenable_def_generators} satisfied}
                            $Append(newM,[Union(x,[mj])])$}\;
            }   
        }
        $SM := newM$;
    }    

\return $SM$\;
\end{algorithm}

As we will see in the next subsection, we do not need all the amenable sets.

\subsection{The ground}\label{subsec:ground}
We continue considering $S$ a numerical semigroup minimally generated by $\{n_1<\cdots<n_e\}$ with conductor $c$. Let $m\geq 2c-1$. The set $\{m,\ldots, m+n_e-1\}$ is called the $(S,m)$-\emph{ground}, or simply \emph{ground}. 

The intersection of an $(S,m,r)$-amenable set $M$ with the $(S,m)$-ground is called the \emph{shadow} of $M$.

Note that the shadow of an amenable set is amenable.
\begin{lemma}\label{suelo-general}
Let $S$ be a numerical semigroup minimally generated by $\{n_1<\cdots<n_e\}$ with conductor $c$. Let $m\geq 2c-1$ and let $M=\{m=m_1<\cdots<m_r\}$ be an amenable set. Let $L=M\cap [m,m+n_e)$ be the shadow of $M$. Then
\[\mathrm D(M)=(M\setminus L)\cup \mathrm D(L),\]
and furthermore $\sharp\mathrm D(M)=\sharp(M\setminus L)+ \sharp\mathrm D(L)$.
\end{lemma}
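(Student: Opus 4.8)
The plan is to prove the set identity $\mathrm D(M)=(M\setminus L)\cup\mathrm D(L)$ first, and then deduce the cardinality statement from it.

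For the inclusion $\supseteq$, note that $\mathrm D(L)\subseteq\mathrm D(M)$ is immediate since $L\subseteq M$ and divisors of divisors are divisors (i.e. $\mathrm D(x)\subseteq\mathrm D(y)$ whenever $x\in\mathrm D(y)$, observed right after Corollary~\ref{cor:divisors}); more simply $\mathrm D(L)=\bigcup_{x\in L}\mathrm D(x)\subseteq\bigcup_{x\in M}\mathrm D(x)=\mathrm D(M)$. For $M\setminus L\subseteq\mathrm D(M)$, take $m_i\in M$ with $m_i\ge m+n_e$; I must show $m_i\in\mathrm D(M)$, i.e. $m_i$ divides some element of $M$. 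The natural candidate is $m_i$ divides $m_i+n_1$? No — I need something already \emph{in} $M$. Here is where I use the structure: since $m_i\ge m+n_e\ge m+n_1$ and $m_i-n_1\in S$ with $m_i-n_1\ge m$, amenability (via Proposition~\ref{prop:amenable_def_generators}, using the minimal generator $n_1$) forces $m_i-n_1\in M$. Then $m_i=(m_i-n_1)+n_1$ with $n_1\in S$, so $m_i\in\mathrm D(m_i-n_1)\subseteq\mathrm D(M)$. Actually even cleaner: $n_1\in\mathrm D(m_i)$ is not what I want; I want $m_i$ as a divisor, and $m_i\in\mathrm D(m_i-n_1)$ precisely because $(m_i-n_1)-m_i=-n_1\notin S$ — so that's wrong too. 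The correct statement: $m_i$ divides $x$ means $x-m_i\in S$; taking $x=m_i$ itself gives $m_i\in\mathrm D(m_i)$, and $m_i\in M$, so trivially $m_i\in\mathrm D(M)$. In fact \emph{every} element of $M$ lies in $\mathrm D(M)$ because $x\in\mathrm D(x)$ for all $x\in S$ (as $x-x=0\in S$). So $M\subseteq\mathrm D(M)$ outright, and in particular $M\setminus L\subseteq\mathrm D(M)$, with no amenability needed for this inclusion.

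For the reverse inclusion $\mathrm D(M)\subseteq(M\setminus L)\cup\mathrm D(L)$, take $p\in\mathrm D(M)$, say $p\in\mathrm D(m_i)$ for some $i$. If $p<m$ then $p\in\mathrm D(m_1)=\mathrm D(m)\subseteq\mathrm D(L)$ since $p\in\mathrm D(m_i)\cap[0,m)$ and $m_i\ge m\ge 2c-1$ forces $p\in m_i-S$... more directly: if $p\le m-1$, is $p\in\mathrm D(L)$? We have $m\in L$ and $\mathrm D(m)=S\cap(m-S)$; since $m\ge 2c-1$, $\mathrm D(m)=\{0,1,\dots\}\cap\ldots$ — by Proposition~\ref{prop:well-known} every integer in $[0,m-c]$ lies in $m-S$, but I need $p\in S$ too, which holds since $p\in\mathrm D(m_i)\subseteq S$. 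So if moreover $p\le m-c$ then $p\in m-S$ hence $p\in\mathrm D(m)$. The remaining range $p\in[m-c+1,m-1]\cap S$ and $p\in[m,\infty)$ must be handled: if $p\ge m$, then $p\in\mathrm D(m_i)\cap[m,\infty)\subseteq M$ by amenability; then either $p\in M\setminus L$ (done) or $p\in L\subseteq\mathrm D(L)$ (done, since $p\in\mathrm D(p)$). The genuinely delicate subcase is $p\in[m-c+1,m-1]$: here I should argue $p\in\mathrm D(L)$ by exhibiting an element $\ell\in L$ with $\ell-p\in S$; since $p\ge m-c+1$ we get $m+n_e-1-p\le n_e+c-2$, and one picks $\ell$ to be an appropriate element of $L$ near $m$ — concretely, because $L$ is $m$-closed under division and $m\in L$, and divisors of $m$ below $m$ include all of $[0,m-c]\cap S$, combined with the earlier remark this should cover $p$. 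I expect this low-range bookkeeping to be the main obstacle, and I would resolve it by showing directly that $\mathrm D(m_i)\cap[0,m)\subseteq\mathrm D(m)$ for any $m_i\ge m$ when $m\ge 2c-1$: indeed $\mathrm D(m_i)\cap[0,m-1]=\{s\in S:m_i-s\in S,\ s<m\}$ and since $m_i-s>m_i-m\ge 0$... one checks $m_i-s\ge c$ is not guaranteed, so instead use that $s\in S$, $s<m\le 2c-1$ doesn't immediately give $m-s\in S$. The clean fix: by Lemma~\ref{lema:divisors}, $p\in\mathrm D(m_i)$ with $p<m$ means $p\in S$ and $p=m_i-t$, $t\in S$; then $t=m_i-p>m_i-m\ge 0$, and $t\ge m_i-m+1>\ldots$; since $m_i\ge m\ge 2c-1$ and $p<m$ we get $t>m_i-m\ge 0$ hence $t\ge 1$, but also $m-p=m-m_i+t$; if $m_i=m$ then $m-p=t\in S$ so $p\in\mathrm D(m)$; if $m_i>m$ this may fail, so I instead use the largest $m_j\le m_i$ that is $\le$ something...

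On reflection the slickest route is: prove $\mathrm D(m_i)\subseteq\mathrm D(L)\cup(M\setminus L)$ by downward induction on $i$. For $p\in\mathrm D(m_i)$: if $p\ge m$, amenability gives $p\in M$ and we are done as above. If $p<m$, then $p\in S$ and $m_i-p\in S\setminus\{0\}$; if $i=1$, $m_i=m\in L$ and $m-p\in S$ gives $p\in\mathrm D(m)\subseteq\mathrm D(L)$. If $i>1$: since $p<m\le m_{i-1}$, pick a minimal generator $g$ with $m_i-p\in g+S$ if $m_i-p$ is not itself forced to be a generator — rather, since $m_i-p\ge m_i-m+1$ and $m_i>m$, we have $m_i-p>m_i-m_{i-1}$... this isn't leading anywhere uniformly. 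I'll therefore fall back on the direct argument using $m\ge 2c-1$: for any $y\ge m\ge 2c-1$, $\mathrm D(y)\cap[0,m)=S\cap[0,m)\setminus(\text{gaps of }y)$, and since $y-s\ge y-m+1\ge 1$ for $s<m$, while the non-divisors $s<m$ of $y$ are those with $y-s\notin S$, i.e. $y-s$ a gap, forcing $y-s\le c-1$, i.e. $s\ge y-c+1\ge m-c+1\ge c$; hence $\mathrm D(y)\cap[0,m)\supseteq S\cap[0,c-1]=S\cap[0,m)\cap[0,c-1]$, and independent of $y$ on $[0,c-1]$. Meanwhile $\mathrm D(m)\cap[0,m)$ likewise equals $S\cap[0,m)$ minus $\{s:m-s\text{ a gap}\}=\{s:s\ge m-c+1\}$; for such $s\ge m-c+1\ge c$ we'd need to check $s\in\mathrm D(m)$ or not — but crucially $\mathrm D(y)$ and $\mathrm D(m)$ can genuinely differ on $[m-c+1,m-1]$. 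The resolution: such a $p\in[m-c+1,m-1]\cap\mathrm D(m_i)$ still lies in $\mathrm D(\ell)$ for $\ell=m+(\text{the gap }m_i-p)$ shifted appropriately — precisely $\ell:=p+(m_i-p)$... no. \textbf{Final clean plan:} if $p\in[m-c+1,m-1]\cap\mathrm D(m_i)\cap S$, write $m_i-p=t\in S$, and set $\ell:=m_i-t+(m_i'-m_i)$... I will just invoke that $p\in S$, $p<m\le 2c-1\Rightarrow p\le 2c-2$, and use $\mathrm D(m+n_e-1)$: since $m+n_e-1\in L$ and $m+n_e-1-p\ge n_e-1+ c\ge c$ (as $p\le m-1$, so $m+n_e-1-p\ge n_e$), we get $m+n_e-1-p\in S$ automatically (it is $\ge c$), hence $p\in\mathrm D(m+n_e-1)\subseteq\mathrm D(L)$. \emph{This} is the argument, and it is clean: for $p\le m-1$, $(m+n_e-1)-p\ge n_e\ge$ — wait, need $\ge c$. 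We only know $n_e\ge 1$. Hmm: but $p\ge 0$ gives $(m+n_e-1)-p\le m+n_e-1$; and $p\le m-1$ gives $(m+n_e-1)-p\ge n_e$, not $\ge c$. Since $m\ge 2c-1\ge c$, actually $(m+n_e-1)-p\ge n_e\ge 1$; to force it into $S$ I'd want $\ge c$. So restrict: this works when $p\le m+n_e-1-c$, i.e. $p\le m+n_e-1-c$; since $m\ge 2c-1$, $m+n_e-1-c\ge c-2+n_e\ge c-1$ (as $n_e\ge 1$), so it covers $p\le c-1$ at least, and actually all $p$ with $p\le m+n_e-1-c$. Combined with the range $p\ge m$ handled by amenability, the only possible gap is $p\in[m+n_e-c,\,m-1]$, which is \emph{empty} precisely when $n_e\ge c$; in general for $p$ in that middle band one uses $\ell:=p+s$ where $s=m_i-p\in S$, noting $\ell=m_i$ if $i$ gives $\ell<m+n_e$ i.e. $m_i\in L$, else shift down using amenability with generator $n_1$ repeatedly until landing in $L$ — each step $m_i\mapsto m_i-n_1$ stays $\ge m$ and stays in $M$ and preserves $p$ as a divisor (since $m_i-n_1-p=s-n_1$; need $s\ge n_1$, which holds as $s\in S\setminus\{0\}$ so $s\ge n_1$). \textbf{That is the key step and the main obstacle: showing $p$ remains a divisor after sliding $m_i$ down into the shadow $L$ via repeated subtraction of $n_1$.} Once $\mathrm D(M)=(M\setminus L)\cup\mathrm D(L)$ is established, disjointness of the union is clear since $M\setminus L\subseteq[m+n_e,\infty)$ while $\mathrm D(L)\subseteq[0,m+n_e-1]$ (as $\mathrm D(x)\subseteq[0,x]$ and $x<m+n_e$ for $x\in L$), so $\sharp\mathrm D(M)=\sharp(M\setminus L)+\sharp\mathrm D(L)$ follows immediately.
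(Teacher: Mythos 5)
Your plan ultimately converges on the right strategy (slide the witness $m_i$ down into the shadow by subtracting generators, using amenability to stay inside $M$), but the final step as you state it is wrong, and the auxiliary branch you set up to avoid it is also wrong.

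\begin{itemize}
\item \textbf{The sliding step fails with $n_1$.} You claim that replacing $m_i$ by $m_i-n_1$ ``preserves $p$ as a divisor (since $m_i-n_1-p=s-n_1$; need $s\ge n_1$).'' But $s\ge n_1$ does not give $s-n_1\in S$, which is what ``$p$ divides $m_i-n_1$'' requires. For instance in $\langle 3,5\rangle$ with $s=5$ one has $s-n_1=2\notin S$. What you actually need (and what the paper does) is: since $s=m_i-p\in S\setminus\{0\}$, there is \emph{some} minimal generator $n_j$ with $s-n_j\in S$ (write $s$ as a sum of generators and peel one off). Then $(m_i-n_j)-p=s-n_j\in S$, so $p$ divides $m_i-n_j$; and $m_i-n_j\ge m+n_e-n_j\ge m$, so $m_i-n_j\in\mathrm D(m_i)\cap[m,\infty)\subseteq M$ by amenability. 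Iterate until you land in $L$. Choosing $n_1$ specifically is the error; the generator must be chosen to match the divisor $s$.
\item \textbf{The $\mathrm D(m+n_e-1)$ branch is unfounded.} You ``invoke that $m+n_e-1\in L$,'' but $L=M\cap[m,m+n_e)$ and there is no reason $m+n_e-1$ lies in $M$. Amenability does not force the top element of the ground to be present. So this whole case split is not available, and in any case it is unnecessary once the sliding step is fixed, since that argument covers every $p<m$ uniformly.
\end{itemize}

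Everything else is fine: $M\subseteq\mathrm D(M)$ trivially, the case $p\ge m$ is handled correctly by amenability, and your disjointness observation ($M\setminus L\subseteq[m+n_e,\infty)$, $\mathrm D(L)\subseteq[0,m+n_e-1]$) correctly yields the cardinality identity. The fix amounts to one line — choose $n_j$ dividing $m_i-p$ rather than always $n_1$ — after which your proof coincides with the paper's.
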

\begin{proof}
The inclusion $(M\setminus L)\cup \mathrm D(L)\subseteq \mathrm D(M)$ is clear. For the other inclusion, let $x\in \mathrm D(M)\setminus (M\setminus L)=(\mathrm D(M)\setminus M)\cup L$. We want to prove that $x\in \mathrm D(L)$. Since $L\subseteq \mathrm D(L)$, we can assume that $x\in \mathrm D(M)\setminus M$. Then $x\in \mathrm D(m_i)$ for some $i\in \{1,\ldots,r\}$ and $m_i\geq m+n_e$. As $m_i-x\in S\setminus \{0\}$, there exists $j\in \{1,\ldots,e\}$ such that $m_i-x-n_j\in S$. Hence $x\in \mathrm D(m_i-n_j)$. By hypothesis $M$ is amenable
and thus $m_i-n_j\in M$, since $m_i-n_j\in \mathrm D(m_i)\cap[m,\infty)$. 
If needed, we can repeat the process until $m_i-n_j\in L$, that is, $x\in \mathrm D(L)$.

The second assertion follows easily since the above union is disjoint.
\end{proof}

As an easy but useful consequence, we get the following corollary. 
\begin{corollary}\label{cor:shadow_divisors}
Let $M$ and $N$ be $(m,r)$-amenable sets with shadows $L_M$ and $L_N$ respectively. $L_M\subseteq L_N \implies \sharp\mathrm D(M)\leq \sharp\mathrm D(N)$.
\end{corollary}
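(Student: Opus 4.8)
The plan is to deduce Corollary~\ref{cor:shadow_divisors} directly from Lemma~\ref{suelo-general} together with the hypothesis $L_M\subseteq L_N$. First I would invoke the decomposition $\sharp\mathrm D(M)=\sharp(M\setminus L_M)+\sharp\mathrm D(L_M)$ and $\sharp\mathrm D(N)=\sharp(N\setminus L_N)+\sharp\mathrm D(L_N)$ provided by the lemma, so that it suffices to compare the two summands separately.

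For the divisor part, I would argue that $L_M\subseteq L_N$ implies $\mathrm D(L_M)\subseteq\mathrm D(L_N)$, since $\mathrm D$ of a set is by definition the union of the $\mathrm D(x)$ over its elements, hence monotone under inclusion; therefore $\sharp\mathrm D(L_M)\le\sharp\mathrm D(L_N)$. For the "above the ground'' part, I would use that $M$ and $N$ are both $(m,r)$-amenable, so they have the same cardinality $r$ and both contain $m$; writing $\sharp(M\setminus L_M)=r-\sharp L_M$ and $\sharp(N\setminus L_N)=r-\sharp L_N$, the inclusion $L_M\subseteq L_N$ gives $\sharp L_M\le\sharp L_N$ and hence $\sharp(M\setminus L_M)\ge\sharp(N\setminus L_N)$.

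Naively adding these two inequalities goes the wrong way for one of the terms, so the delicate point is that the \emph{loss} $\sharp L_N-\sharp L_M$ in the first summand must be absorbed by the \emph{gain} in the divisor summand. The key observation is that the extra elements of $L_N$ not in $L_M$ lie in $[m,m+n_e)\subseteq\mathrm D(L_N)$ (each element of a shadow divides itself and so belongs to $\mathrm D$ of the shadow), while none of them lies in $\mathrm D(L_M)\subseteq[m,m+n_e)$ unless it already sits in $L_M$; so $\sharp\mathrm D(L_N)\ge\sharp\mathrm D(L_M)+(\sharp L_N-\sharp L_M)$. Combining, $\sharp\mathrm D(N)=(r-\sharp L_N)+\sharp\mathrm D(L_N)\ge (r-\sharp L_N)+\sharp\mathrm D(L_M)+\sharp L_N-\sharp L_M=(r-\sharp L_M)+\sharp\mathrm D(L_M)=\sharp\mathrm D(M)$.

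The main obstacle is making precise the claim $\mathrm D(L_N)\setminus\mathrm D(L_M)\supseteq L_N\setminus L_M$ cleanly, i.e. that passing from the smaller shadow to the larger one cannot destroy divisors while it necessarily creates at least as many new ones as the number of new ground elements. This is exactly where amenability of $N$ (hence $L_N$, by the remark preceding Lemma~\ref{suelo-general}) is used: it guarantees $L_N\subseteq\mathrm D(L_N)$ with the new elements genuinely contributing, and since everything in $\mathrm D(L_N)$ that exceeds $m+n_e-1$ cannot occur ($\mathrm D(x)\subseteq[0,x]$ and the shadow sits in $[m,m+n_e)$ only for its own elements — more carefully, one bounds divisors of shadow elements), the counting closes. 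Once this elementary combinatorial inequality is in place, the corollary follows by the arithmetic above.
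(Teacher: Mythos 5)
Your argument follows the same route as the paper: decompose via Lemma~\ref{suelo-general}, observe $\sharp(M\setminus L_M)=r-\sharp L_M$ and $\sharp(N\setminus L_N)=r-\sharp L_N$, and then show that the surplus $\sharp L_N-\sharp L_M$ lost in the first summand is recovered in the divisor summand because $L_N\setminus L_M$ contributes that many new divisors disjoint from $\mathrm D(L_M)$. This matches the paper's proof almost exactly (the paper writes $K=L_N\setminus L_M$ and uses $\mathrm D(L_N)\supseteq \mathrm D(L_M)\cup K$).

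Two points in your write-up are off, though, and the second is a genuine misattribution of where the hypotheses are doing work. First, the inclusions "$[m,m+n_e)\subseteq\mathrm D(L_N)$" and "$\mathrm D(L_M)\subseteq[m,m+n_e)$" are both false as written (divisors descend to $0$, and not every ground element divides something in $L_N$); what you need are $L_N\subseteq\mathrm D(L_N)$ (true simply because $0\in S$, no amenability required) and $\mathrm D(L_M)\cap[m,m+n_e)=L_M$. Second, and more importantly, the disjointness of $L_N\setminus L_M$ from $\mathrm D(L_M)$ is a property forced by the amenability of $M$ (equivalently of $L_M$), not of $N$: if $y\in[m,m+n_e)$ divides some $x\in L_M$, then $y\in\mathrm D(x)\cap[m,\infty)\subseteq L_M$ precisely because $L_M$ is $m$-closed under division. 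Amenability of $L_N$ gives $\mathrm D(L_N)\cap[m,\infty)\subseteq L_N$, which by itself does not rule out an element of $L_N\setminus L_M$ being a divisor of something in $L_M$. Once you correct that attribution and the two slipped inclusions, the counting you sketch closes exactly as in the paper.
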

\begin{proof}
 Suppose that $L_N$ is the disjoint union of $L_M$ and a set $K$ of cardinality $k$. Observe that $\sharp (M\setminus L_M)=\sharp (N\setminus L_N)+k$.
 
 As $\mathrm D(L_N)=\mathrm D(L_M)\cup\mathrm D(K)\supseteq \mathrm D(L_M)\cup K$, it follows that $\sharp\mathrm D(L_N) \ge \sharp \mathrm D(L_M)+k$, that is, $\sharp\mathrm D(L_M) \le \sharp \mathrm D(L_N)-k$.
 
 $\sharp \mathrm D(M)= \sharp (M\setminus L_M)+ \sharp \mathrm D(L_M)\le \sharp (N\setminus L_N)+k+\sharp \mathrm D(L_N)-k$.
\end{proof}

\begin{corollary}\label{cor:delta-suelo}
Let $S$ be a numerical semigroup minimally generated by $\{n_1<\cdots<n_e\}$ with conductor $c$. Let $m\geq 2c-1$ and let $M\subset [m,\infty)$ be an amenable set which is an optimal configuration of cardinality $r$. Let $L=M\cap [m,m+n_e)$ be the shadow of $M$.
Then $\delta^r(m)=\sharp \mathrm D(L) + \sharp (M\setminus L)$.
\end{corollary}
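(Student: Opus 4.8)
The statement to prove is Corollary~\ref{cor:delta-suelo}: under the stated hypotheses, $\delta^r(m) = \sharp\mathrm D(L) + \sharp(M\setminus L)$.

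\medskip

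The plan is to simply combine the definition of optimal configuration with Lemma~\ref{suelo-general}. First I would recall that since $M$ is by hypothesis an optimal configuration of cardinality $r$, Definition~\ref{def:optimal_configuration} gives directly that $\delta^r(m) = \sharp\mathrm D(M)$. Next, since $M$ is an $(S,m,r)$-amenable set with $m \ge 2c-1$, the hypotheses of Lemma~\ref{suelo-general} are met with $L = M\cap[m,m+n_e)$ the shadow of $M$; that lemma yields $\mathrm D(M) = (M\setminus L)\cup\mathrm D(L)$ together with the cardinality statement $\sharp\mathrm D(M) = \sharp(M\setminus L) + \sharp\mathrm D(L)$. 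Chaining the two equalities gives $\delta^r(m) = \sharp\mathrm D(M) = \sharp(M\setminus L) + \sharp\mathrm D(L)$, which (up to reordering the summands) is exactly the claim.

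\medskip

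There is essentially no obstacle here: the corollary is a one-line consequence obtained by substituting the amenability/optimality hypotheses into the two results already established. The only point worth a moment's care is checking that the hypotheses of Lemma~\ref{suelo-general} are literally satisfied — namely that $S$ is minimally generated by $\{n_1<\cdots<n_e\}$, that $m\ge 2c-1$ (here we only need $m\ge 2c-1$, which is assumed), and that $M$ is amenable — all of which are part of the statement of the corollary. So the proof amounts to: by Lemma~\ref{suelo-general}, $\sharp\mathrm D(M) = \sharp\mathrm D(L)+\sharp(M\setminus L)$; by optimality of $M$, $\delta^r(m)=\sharp\mathrm D(M)$; combine.
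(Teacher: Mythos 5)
Your proof is correct and is exactly the intended one-line derivation: the paper states this as a corollary of Lemma~\ref{suelo-general} without spelling out a proof, precisely because it follows by combining that lemma's cardinality identity with the definition of optimal configuration, as you do.
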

\begin{corollary}\label{cor2:delta-suelo}
In particular, if there exists an optimal configuration $M$ of cardinality $r$ such that $ [m,m+n_e)\cap \mathbb N\subseteq M$, then $[ m, m+r-1+k]\cap\N$ is also an optimal configuration of cardinality $r+k$.
\end{corollary}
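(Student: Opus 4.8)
The plan is to pin down $\delta^r(m)$ explicitly in terms of the ground and then bootstrap from cardinality $r$ to cardinality $r+k$. Write $G:=[m,m+n_e)\cap\N$ for the $(S,m)$-ground. Since $G\subseteq M$ and $\sharp M=r$, we get at once $r\ge\sharp G=n_e$; hence both intervals $[m,m+r-1]\cap\N$ and $[m,m+r-1+k]\cap\N$ contain $G$, they are $(S,m,r)$- and $(S,m,r+k)$-amenable sets respectively (intervals starting at $m$ are amenable), and each of them has shadow exactly $G$.

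First I would prove $\delta^r(m)=\sharp\mathrm D(G)+(r-n_e)$. Applying Lemma~\ref{suelo-general} to the amenable set $[m,m+r-1]\cap\N$, whose shadow is $G$, gives $\sharp\mathrm D([m,m+r-1]\cap\N)=\sharp\mathrm D(G)+(r-n_e)$, and the left-hand side is $\ge\delta^r(m)$ since $[m,m+r-1]\cap\N$ is a configuration of cardinality $r$; this is the inequality $\le$. For the reverse inequality I would argue directly from $M$, and here is the one delicate point: $M$ is assumed only to be an optimal configuration, not necessarily amenable, so Corollary~\ref{cor:delta-suelo} cannot be invoked for $M$ itself. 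Instead, split $M=G\sqcup(M\setminus G)$ and observe that every $y\in M\setminus G$ satisfies $y\ge m+n_e$, hence is strictly larger than every element of $G$; as $\mathrm D(x)\subseteq[0,x]$ for each $x$, such a $y$ divides no element of $G$, so $M\setminus G\subseteq\mathrm D(M)\setminus\mathrm D(G)$. Combined with $\mathrm D(G)\subseteq\mathrm D(M)$ this yields $\delta^r(m)=\sharp\mathrm D(M)\ge\sharp\mathrm D(G)+\sharp(M\setminus G)=\sharp\mathrm D(G)+(r-n_e)$, as wanted.

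Now I would deal with cardinality $r+k$. Applying Lemma~\ref{suelo-general} to the amenable set $[m,m+r-1+k]\cap\N$, again with shadow $G$, gives $\sharp\mathrm D([m,m+r-1+k]\cap\N)=\sharp\mathrm D(G)+(r+k-n_e)$, which by the previous step equals $\delta^r(m)+k$. Hence it remains only to check that $\delta^{r+k}(m)=\delta^r(m)+k$; the inequality $\le$ is immediate because $[m,m+r-1+k]\cap\N$ is a configuration of cardinality $r+k$. For $\ge$, take a configuration $N$ of cardinality $r+k$ and least element $\ge m$ attaining $\delta^{r+k}(m)$, and delete its $k$ largest elements to obtain a configuration $N'$ of cardinality $r$: each deleted element exceeds $\max N'$ and hence, once more because divisors of $x$ lie in $[0,x]$, belongs to $\mathrm D(N)\setminus\mathrm D(N')$, so $\delta^{r+k}(m)=\sharp\mathrm D(N)\ge\sharp\mathrm D(N')+k\ge\delta^r(m)+k$. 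Putting the two bounds together shows that $[m,m+r-1+k]\cap\N$ is an optimal configuration of cardinality $r+k$.

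The main (and essentially only) obstacle is the gap between ``optimal configuration'' and ``amenable optimal configuration'' in the hypothesis; it is bridged by the elementary remark in the second paragraph that the elements of $M$ above the ground are automatically fresh divisors, which delivers the lower bound for $\delta^r(m)$ without assuming $M$ amenable. Alternatively one could first replace $M$ by an amenable optimal configuration still containing $G$: since a divisor of a ground element that remains $\ge m$ must itself lie in the ground, the exchange procedure in the proof of Proposition~\ref{condiciones-ms} never touches $G$, after which Corollary~\ref{cor:delta-suelo} applies verbatim.
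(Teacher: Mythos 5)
Your proof is correct and, since the paper supplies no explicit argument for this corollary beyond the phrase \emph{in particular}, it fills in the reasoning the paper leaves to the reader. Two aspects of your write-up are worth highlighting.

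First, you correctly spotted that the hypothesis grants only an \emph{optimal configuration} $M$ containing the ground, not an \emph{amenable} one, so Corollary~\ref{cor:delta-suelo} cannot be invoked directly on $M$. Your workaround is sound: the elements of $M$ above the ground are automatically distinct elements of $\mathrm D(M)\setminus\mathrm D(G)$ (each lies in $\mathrm D$ of itself, and all divisors of ground elements sit strictly below $m+n_e$), which delivers the lower bound $\delta^r(m)\ge\sharp\mathrm D(G)+(r-n_e)$ without appealing to amenability. Your alternative route — observing that the exchange procedure in Proposition~\ref{condiciones-ms} never touches the ground when $G\subseteq M$, so one may pass to an amenable optimal configuration still containing $G$ — is equally valid and a bit closer in spirit to the chain Lemma~\ref{suelo-general} $\Rightarrow$ Corollary~\ref{cor:delta-suelo} $\Rightarrow$ Corollary~\ref{cor2:delta-suelo} that the paper evidently has in mind.

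Second, the step $\delta^{r+k}(m)\ge\delta^{r}(m)+k$ (delete the $k$ largest elements of an optimal $(r+k)$-configuration; each removed element is a divisor of the old set but too large to divide anything in the new one) is exactly the monotonicity needed to close the argument, and the paper does not spell it out anywhere; one could alternatively obtain the reverse inequality by applying Corollary~\ref{cor:shadow_divisors} and Lemma~\ref{amenable-a-amenable-intervalo} to an amenable optimal configuration of cardinality $r+k$, but your elementary argument is shorter and self-contained. Altogether the proof is complete and correct.
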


\subsection{An algorithm to compute generalized Feng-Rao numbers}\label{subsec:algorithm_FR}
In the cases where computing divisors is ``easy'', finding optimal configurations is as difficult as computing generalized Feng-Rao numbers. 
This problem is referred to as ``hard'' in the literature, even from the computational point of view. 

Algorithm \ref{alg:compute_FengRao} can be used to compute generalized Feng-Rao numbers of any numerical semigroup. Note that its efficiency depends on the number of amenable sets. 
Due to Corollary~\ref{cor:shadow_divisors}, it can be sharpened, since we only need to consider one amenable set for each possible shadow.

\begin{algorithm}[h]\caption{Generalized Feng-Rao numbers}
\label{alg:compute_FengRao}
\DontPrintSemicolon
\SetKwInOut{Input}{Input}\SetKwInOut{Output}{Output}
\Input{A numerical semigroup $S$, $m\in S$, $r\in \N$}
\Output{$\delta_{FR}^r(m)$}
\BlankLine
$SM := \emptyset$\;
\nl $AM := \{M\subset S\mid M \mbox{ is a } (S,m,r)\mbox{-amenable set}\}$\tcc{Compute the $(m,r)$-amenable sets, by making a call to Algorithm~\ref{alg:compute_amenables}}\;
\nl For each possible shadow $s$, add to $SM$ an element of $AM$ with shadow $s$, if it exists\;
$\nu:=m+r$\tcc{an obvious upper bound}
\nl \For{$M$ in $SM$}{
$D:=\bigcup\{Divisors(x)\mid x\in M\}$\tcc{Compute the divisors of $M$, by using Algorithm~\ref{alg:compute_divisors}}
$\nu := minimum (\sharp D,\nu)$}
\nl \return $\nu$
\end{algorithm}

This algorithm (even preliminary versions of it) has been extensively used by the authors to perform computations which gave the intuition that ultimately led to the main results of this paper.

\section{Numerical semigroups generated by intervals}

From now on we assume that $S=\langle a,\ldots, a+b\rangle$ with $a$ and $b$ positive integers, and $b<a$.

\subsection{Some counting lemmas}\label{subsec:counting_lemmas}
As we have seen above, it is crucial to know the number of divisors of subsets of the ground (this is obtained in Remark \ref{varias-expresiones}). In this section we prove some technical lemmas on counting the divisors of elements, and then apply them for elements in the ground. The main result (Lemma~\ref{seguidos-dan-menos}) shows that the minimum is obtained when the elements form an interval starting in $m$. 

Membership problem for semigroups generated by intervals is trivial as the following known result (and with many different formulations) shows.

\begin{lemma}\cite[Lemma 10, for $d=1$]{ChGsLl}\label{pertenencia-intervalos}
Let $k$ and $r$ be integers such that $0\leq r\leq a-1$. Then $ka+r\in S$ if and only if $r\le kb$.
\end{lemma}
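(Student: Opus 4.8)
\textbf{Proof plan for Lemma~\ref{pertenencia-intervalos}.}

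The plan is to characterize exactly when an integer $n$ with Euclidean decomposition $n = ka + r$, $0 \le r \le a-1$, lies in $S = \langle a, a+1, \ldots, a+b\rangle$. The key observation is that any expression of $n$ as a non-negative combination of the generators $a, a+1, \ldots, a+b$ can be rewritten in the form $n = qa + s$ where $q$ is the total number of generators used (counted with multiplicity) and $s$ is the sum of the ``offsets'': if $n = \sum_{j=0}^{b} \lambda_j (a+j)$ with $\lambda_j \in \N$, then setting $q = \sum_j \lambda_j$ and $s = \sum_j j\lambda_j$ we get $n = qa + s$, where $0 \le s \le qb$ since each offset $j$ contributing to $s$ is at most $b$ and there are $q$ terms. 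Conversely, any pair $(q,s)$ with $q \ge 1$ and $0 \le s \le qb$ is realizable: write $s = tb + u$ with $0 \le u < b$ (and note $t \le q$, with $t < q$ unless $u = 0$); then use $t$ copies of the generator $a+b$, one copy of $a+u$ (if $u > 0$), and $q - t - \lceil u/b\rceil$ — more carefully, $q - t$ or $q - t - 1$ — copies of the generator $a$ to pad out to exactly $q$ generators. So $n \in S$ if and only if $n$ admits a representation $n = qa + s$ with $q \ge 1$ and $0 \le s \le qb$ (and $n=0$ corresponds to $q=0$).

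The second step is to show that, for $n = ka + r$ with $0 \le r \le a-1$ fixed, the condition ``there exists $q \ge 1$ with $n = qa + s$, $0 \le s \le qb$'' is equivalent to $r \le kb$. First suppose $r \le kb$: then taking $q = k$ and $s = r$ works directly (here $k \ge 1$ since $r \le kb$ forces $k \ge 1$ when $r > 0$, and when $r = 0$ we may also use $q=k$, or $q=0$ if $k=0$). Conversely, suppose $n = qa + s$ with $0 \le s \le qb$. Since also $n = ka + r$ with $0 \le r < a$, and $0 \le s$, we must have $q \le k$; write $s = r + (k-q)a$. Then $r = s - (k-q)a \le qb - (k-q)a$. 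Since $b < a$ we have $qb - (k-q)a \le qb - (k-q)b = \big(q - (k-q)\big)b = (2q-k)b \le kb$ — wait, this needs $2q - k \le k$, i.e. $q \le k$, which holds; but we also need the intermediate quantity to dominate $r$. Actually more directly: $r + (k-q)a = s \le qb < qa$, so $(k-q)a < qa$ hence again $q > k - q$ is not forced; the clean route is $r \le s \le qb \le kb$, using $q \le k$ and $r \le s$ (which follows from $s = r + (k-q)a$ and $k - q \ge 0$). This gives $r \le kb$ immediately.

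The main obstacle, such as it is, is being careful about the boundary/degenerate cases — namely small $k$, the case $r = 0$, and ensuring the ``padding with copies of $a$'' construction produces exactly the right number of generators without overshooting. Everything else is the elementary bookkeeping sketched above, and indeed the result is standard (the paper cites \cite[Lemma 10, for $d=1$]{ChGsLl}), so I would present the argument compactly: establish the normal form $n = qa + s$ with $0 \le s \le qb$ as the membership criterion, then observe that for fixed residue $r$ this reduces to $r \le kb$ via the inequalities $r \le s \le qb \le kb$ in one direction and the explicit witness $(q,s) = (k,r)$ in the other.
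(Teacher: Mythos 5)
The paper does not prove this lemma itself; it simply cites \cite{ChGsLl}, so there is no ``paper's proof'' to compare your argument against. Your self-contained argument is correct and is the standard one: you characterize membership in $S=\langle a,\ldots,a+b\rangle$ via representations $n=qa+s$ with $q\ge 1$ and $0\le s\le qb$ (where $q$ is the number of generators used and $s$ the accumulated offset), and then reduce to the Euclidean decomposition by noting $q\le k$ forces $r\le s\le qb\le kb$, while conversely $(q,s)=(k,r)$ is an explicit witness when $r\le kb$. The middle of your write-up contains a detour that you yourself flag as unnecessary before landing on the clean chain $r\le s\le qb\le kb$; when writing this up you should simply omit the detour, and you should also state explicitly (as you did implicitly) that $q\le k$ follows because $q>k$ would give $s=r-(q-k)a<0$.
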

\begin{lemma}\label{div-m+algo}
Let $m\geq 2c-1$. Let $q$ be a nonnegative integer and $j\in \{0,\ldots,a-1\}$.
\begin{multline*}
\mathrm D(m, m+qa+j)=\mathrm D(m) \\
\cup \left\{m-(ka+r)~|~ 0\leq r\leq a-j-1,\ \frac{r+j}b-q\leq k<\frac{r}b \right\}\\
\cup \left\{m-(ka+r)~|~ a-j\leq r\leq a-1,\ \frac{r+j-(a+b)}b-q\leq k<\frac{r}b \right\},
\end{multline*}
and this union is disjoint.
\end{lemma}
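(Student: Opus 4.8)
The plan is to compute $\mathrm D(m,m+qa+j)=\mathrm D(m)\cup\mathrm D(m+qa+j)$ by analysing which elements of $S$ divide $m+qa+j$ but not $m$. Since $m\ge 2c-1$, Corollary~\ref{cor:big_divisors} gives $\mathrm D(m+qa+j)\cap[m,\infty)=\bigl((m+qa+j)-S\bigr)\cap[m,\infty)$, but here I want the whole set $\mathrm D(m+qa+j)$ and, more precisely, the \emph{difference} $\mathrm D(m+qa+j)\setminus\mathrm D(m)$. An element $m-s$ with $s\ge 0$ lies in $\mathrm D(m)$ iff $m-s\in S$ and $s\in S$; since $m\ge 2c-1$ and $m-s\ge c$ whenever $s\le c-1$, the real constraint is membership of $s$ (and of $m-s$, which only matters for small $s$, but near the top those are automatically in $S$). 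So the first step is to write an arbitrary candidate divisor of $m+qa+j$ as $m+qa+j-\sigma$ for $\sigma\in S$, and ask when it fails to be a divisor of $m$, i.e. when $m-(m+qa+j-\sigma)=\sigma-qa-j\notin S$ or the quotient $m+qa+j-\sigma\notin S$.

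The cleanest way is to parametrise the new divisors directly as $m-(ka+r)$ with $0\le r\le a-1$ and $k$ an integer (possibly forced to be small/negative), and determine the exact range of $k$. For $m-(ka+r)$ to divide $m+qa+j$ we need $m+qa+j-(m-(ka+r))=(k+q)a+(r+j)\in S$, and for it NOT to divide $m$ we need $ka+r\notin S$ (again using that the co-divisors $m-(ka+r)$ and $m+qa+j-(m-(ka+r))$ are $\ge c$ in the relevant range, so only these two membership conditions are active). Now apply Lemma~\ref{pertenencia-intervalos} twice. Writing $r+j=(r+j)$: if $0\le r\le a-j-1$ then $r+j\in\{0,\ldots,a-1\}$, so $(k+q)a+(r+j)\in S\iff r+j\le (k+q)b$, i.e. $k\ge \frac{r+j}{b}-q$; and $ka+r\notin S\iff r>kb\iff k<\frac rb$. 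If instead $a-j\le r\le a-1$ then $r+j\ge a$, so $(k+q)a+(r+j)=(k+q+1)a+(r+j-a)$ with $r+j-a\in\{0,\ldots,a-1\}$, giving the condition $r+j-a\le(k+q+1)b$, i.e. $k\ge\frac{r+j-a-b}{b}-q=\frac{r+j-(a+b)}{b}-q$, while the non-divisibility of $m$ is unchanged, $k<\frac rb$. These are exactly the two sets displayed in the statement, so $\mathrm D(m,m+qa+j)$ equals $\mathrm D(m)$ union those two sets.

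For disjointness: the two new sets are disjoint from each other because they involve disjoint ranges of $r$ (namely $r\le a-j-1$ versus $r\ge a-j$); and each is disjoint from $\mathrm D(m)$ by construction, since every element $m-(ka+r)$ there satisfies $ka+r\notin S$, hence $m-(m-(ka+r))=ka+r\notin S$, so $m-(ka+r)\notin\mathrm D(m)$. One should also check there is no double-counting \emph{within} a single set, i.e. that the map $(k,r)\mapsto m-(ka+r)$ is injective on the prescribed index set; this is immediate since $0\le r\le a-1$ forces $r$ to be the residue of $m-(ka+r)$ modulo $a$ and then $k$ is determined.

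\textbf{Main obstacle.} The routine part is the two applications of Lemma~\ref{pertenencia-intervalos}; the delicate point is justifying that, throughout the relevant range of $(k,r)$, the \emph{other} two membership conditions that a priori enter (namely that the divisor $m-(ka+r)$ itself lies in $S$, and that the co-factor of $m+qa+j$ lies in $S$) are automatically satisfied, so that the characterisation really reduces to just ``$(k+q)a+(r+j)\in S$ and $ka+r\notin S$''. This uses $m\ge 2c-1$ together with the bound $c\le 2g\le\ldots$ controlling how large $ka+r$ and $(k+q)a+(r+j)$ can be before the corresponding numbers drop below $c$; one must verify that whenever $ka+r\notin S$ one has $ka+r\le c-1<m-c$, hence $m-(ka+r)\ge c\in S$, and similarly on the other side. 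I would isolate this bookkeeping as the one place requiring care, and otherwise the proof is a direct unwinding of the definitions.
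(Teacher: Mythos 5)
Your proposal reproduces the paper's proof: both parametrize the candidate new divisors as $m-(ka+r)$, reduce to the two membership conditions $ka+r\notin S$ and $(k+q)a+(r+j)\in S$, and apply Lemma~\ref{pertenencia-intervalos} in the two cases $r+j\leq a-1$ and $r+j\geq a$, resolving the point you flag exactly as you indicate: $ka+r\notin S$ forces $ka+r\leq c-1\leq m-c$, hence $m-(ka+r)\geq c\in S$. One small correction: your closing paragraph claims the co-factor condition $(k+q)a+(r+j)\in S$ is also ``automatically satisfied,'' but it is not---it is precisely the second \emph{active} condition (and your main paragraph treats it correctly as such)---so that clause should simply be dropped.
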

\begin{proof}
We describe the set $\mathrm D(m+qa+j)\setminus \mathrm D(m)$. Let $x$ be an integer such that $m+qa+j-x\in S$ and $m-x\not\in S$. In particular, as $m\geq 2c-1$, $m-x\not\in S$ implies that $m-x<c$, and thus $c-1\leq m-c<x$, which leads to $x\in S$. Thus $x\in \mathrm D(m+qa+j)\setminus \mathrm D(m)$. Set $n=m-x$, and let $k$ and $r$ be integers such that $n=ka+r$ ($x=m-(ka+r)$). Then $n=ka+r\not\in S$ and $n+qa+j=(q+k)a+(j+r)\in S$. In view Lemma \ref{pertenencia-intervalos}, this implies that $kb < r < a$ and
\begin{itemize}
\item if $r+j\leq a-1$, then $0\leq r+j\leq (q+k)b$,
\item if $r+j\geq a$, by writing $(q+k)a+(j+r)=(q+k+1)a+(j+r-a)$, we obtain $0\leq r+j-a\leq (q+k+1)b$.
\end{itemize}
\end{proof}

Figure \ref{fig:divisors_42_59} shows how are the divisors of $\mathrm D(m, m+\lambda)$ with $\lambda\in \{42,59\}$ and  $S=<9,10,11,12,13>$.

\begin{center}
\begin{figure}[h]
\begin{tikzpicture} 
[dm/.style={diamond,draw=black!50,fill=green!50,thick,
inner sep=0pt,minimum size=4.5000000000mm},
dv/.style={rectangle,draw=black!50,fill=red!50,thick,
inner sep=0pt,minimum size=4.5000000000mm},
dvcapdm/.style={diamond,draw=black!50,fill=green!50!red,thick,
inner sep=0pt,minimum size=4.5000000000mm},
dmcapground/.style={diamond,draw=black!50,fill=green!80,thick, 
inner sep=0pt,minimum size=4.5000000000mm}, 
dvcapground/.style={rectangle,draw=black!50,fill=red!80,thick, 
inner sep=0pt,minimum size=4.5000000000mm}, 
ground/.style={circle,draw=black!50,fill=blue!50,thick, 
inner sep=0pt,minimum size=4.5000000000mm},
gap/.style={circle,draw=black!40,fill=green!30,thick,
 inner sep=0pt,minimum size=4.5000000000mm}]
\node at (0,1) [dm, label = right: divisors of m] {};
\node at (3,1) [dv, label = right: divisors of v] {};
\node at (6,1) [dvcapdm, label = right: divisors of v $\cap$ divisors of m] {};
\node at (0,2) [dmcapground, label = right: divisors of m $\cap$ ground] {};
\node at (5,2) [dvcapground, label = right: divisors of v $\cap$ ground] {};
\node at (0,0) [ground, label = right: Other elements in ground] {};
\node at (6,0) [gap,label = right: Other elements] {};
\end{tikzpicture}
\begin{tikzpicture}
[dm/.style={diamond,draw=black!50,fill=green!50,thick,
inner sep=0pt,minimum size=4.5000000000mm},
dv/.style={rectangle,draw=black!50,fill=red!50,thick,
inner sep=0pt,minimum size=4.5000000000mm},
dvcapdm/.style={diamond,draw=black!50,fill=green!50!red,thick,
inner sep=0pt,minimum size=4.5000000000mm},
dmcapground/.style={diamond,draw=black!50,fill=green!80,thick, 
inner sep=0pt,minimum size=4.5000000000mm}, 
dvcapground/.style={rectangle,draw=black!80,fill=red!80,thick, 
inner sep=0pt,minimum size=4.5000000000mm}, 
ground/.style={circle,draw=black!50,fill=blue!50,thick, 
inner sep=0pt,minimum size=4.5000000000mm},
gap/.style={circle,draw=black!40,fill=green!30,thick,
 inner sep=0pt,minimum size=4.5000000000mm}]
\node at (0,2.4000000000) [ground] {\tiny 36};
\node at (0.5000000000,2.4666666666) [ground] {\tiny 37};
\node at (1,2.5333333333) [ground] {\tiny 38};
\node at (1.5000000000,2.6000000000) [ground] {\tiny 39};
\node at (2,2.6666666666) [ground] {\tiny 40};
\node at (2.5000000000,2.7333333333) [ground] {\tiny 41};
\node at (3,2.8000000000) [dvcapground] {\tiny 42};
\node at (3.5000000000,2.8666666666) [ground] {\tiny 43};
\node at (4,2.9333333333) [ground] {\tiny 44};
\node at (0,1.8000000000) [gap] {\tiny 27};
\node at (0.5000000000,1.8666666666) [gap] {\tiny 28};
\node at (1,1.9333333333) [dv] {\tiny 29};
\node at (1.5000000000,2) [dv] {\tiny 30};
\node at (2,2.0666666666) [dv] {\tiny 31};
\node at (2.5000000000,2.1333333333) [dv] {\tiny 32};
\node at (3,2.2000000000) [dv] {\tiny 33};
\node at (3.5000000000,2.2666666666) [gap] {\tiny 34};
\node at (4,2.3333333333) [dmcapground] {\tiny 35};
\node at (0,1.2000000000) [dv] {\tiny 18};
\node at (0.5000000000,1.2666666666) [dv] {\tiny 19};
\node at (1,1.3333333333) [dv] {\tiny 20};
\node at (1.5000000000,1.4000000000) [dv] {\tiny 21};
\node at (2,1.4666666666) [dvcapdm] {\tiny 22};
\node at (2.5000000000,1.5333333333) [dvcapdm] {\tiny 23};
\node at (3,1.6000000000) [dvcapdm] {\tiny 24};
\node at (3.5000000000,1.6666666666) [dm] {\tiny 25};
\node at (4,1.7333333333) [dm] {\tiny 26};
\node at (0,0.6000000000) [dvcapdm] {\tiny 9};
\node at (0.5000000000,0.6666666666) [dvcapdm] {\tiny 10};
\node at (1,0.7333333333) [dvcapdm] {\tiny 11};
\node at (1.5000000000,0.8000000000) [dvcapdm] {\tiny 12};
\node at (2,0.8666666666) [dvcapdm] {\tiny 13};
\node at (2.5000000000,0.9333333333) [gap] {\tiny 14};
\node at (3,1) [gap] {\tiny 15};
\node at (3.5000000000,1.0666666666) [gap] {\tiny 16};
\node at (4,1.1333333333) [gap] {\tiny 17};
\node at (0,0) [dvcapdm] {\tiny 0};
\node at (0.5000000000,0.0666666666) [gap] {\tiny 1};
\node at (1,0.1333333333) [gap] {\tiny 2};
\node at (1.5000000000,0.2000000000) [gap] {\tiny 3};
\node at (2,0.2666666666) [gap] {\tiny 4};
\node at (2.5000000000,0.3333333333) [gap] {\tiny 5};
\node at (3,0.4000000000) [gap] {\tiny 6};
\node at (3.5000000000,0.4666666666) [gap] {\tiny 7};
\node at (4,0.5333333333) [gap] {\tiny 8};
\end{tikzpicture}
\quad
\begin{tikzpicture}
[dm/.style={diamond,draw=black!50,fill=green!50,thick,
inner sep=0pt,minimum size=4.5000000000mm},
dv/.style={rectangle,draw=black!50,fill=red!50,thick,
inner sep=0pt,minimum size=4.5000000000mm},
dvcapdm/.style={diamond,draw=black!50,fill=green!50!red,thick,
inner sep=0pt,minimum size=4.5000000000mm},
dmcapground/.style={diamond,draw=black!50,fill=green!80,thick, 
inner sep=0pt,minimum size=4.5000000000mm}, 
dvcapground/.style={rectangle,draw=black!80,fill=red!80,thick, 
inner sep=0pt,minimum size=4.5000000000mm}, 
ground/.style={circle,draw=black!50,fill=blue!50,thick, 
inner sep=0pt,minimum size=4.5000000000mm},
gap/.style={circle,draw=black!40,fill=green!30,thick,
 inner sep=0pt,minimum size=4.5000000000mm}]
\node at (0,3.6000000000) [gap] {\tiny 54};
\node at (0.5000000000,3.6666666666) [gap] {\tiny 55};
\node at (1,3.7333333333) [gap] {\tiny 56};
\node at (1.5000000000,3.8000000000) [gap] {\tiny 57};
\node at (2,3.8666666666) [gap] {\tiny 58};
\node at (2.5000000000,3.9333333333) [dv] {\tiny 59};
\node at (3,4) [gap] {\tiny 60};
\node at (3.5000000000,4.0666666666) [gap] {\tiny 61};
\node at (4,4.1333333333) [gap] {\tiny 62};
\node at (0,3) [ground] {\tiny 45};
\node at (0.5000000000,3.0666666666) [dvcapground] {\tiny 46};
\node at (1,3.1333333333) [dvcapground] {\tiny 47};
\node at (1.5000000000,3.2000000000) [dvcapground] {\tiny 48};
\node at (2,3.2666666666) [dv] {\tiny 49};
\node at (2.5000000000,3.3333333333) [dv] {\tiny 50};
\node at (3,3.4000000000) [gap] {\tiny 51};
\node at (3.5000000000,3.4666666666) [gap] {\tiny 52};
\node at (4,3.5333333333) [gap] {\tiny 53};
\node at (0,2.4000000000) [dvcapground] {\tiny 36};
\node at (0.5000000000,2.4666666666) [dvcapground] {\tiny 37};
\node at (1,2.5333333333) [dvcapground] {\tiny 38};
\node at (1.5000000000,2.6000000000) [dvcapground] {\tiny 39};
\node at (2,2.6666666666) [dvcapground] {\tiny 40};
\node at (2.5000000000,2.7333333333) [dvcapground] {\tiny 41};
\node at (3,2.8000000000) [ground] {\tiny 42};
\node at (3.5000000000,2.8666666666) [ground] {\tiny 43};
\node at (4,2.9333333333) [ground] {\tiny 44};
\node at (0,1.8000000000) [dv] {\tiny 27};
\node at (0.5000000000,1.8666666666) [dv] {\tiny 28};
\node at (1,1.9333333333) [dv] {\tiny 29};
\node at (1.5000000000,2) [dv] {\tiny 30};
\node at (2,2.0666666666) [dv] {\tiny 31};
\node at (2.5000000000,2.1333333333) [dv] {\tiny 32};
\node at (3,2.2000000000) [dv] {\tiny 33};
\node at (3.5000000000,2.2666666666) [dv] {\tiny 34};
\node at (4,2.3333333333) [dvcapdm] {\tiny 35};
\node at (0,1.2000000000) [dv] {\tiny 18};
\node at (0.5000000000,1.2666666666) [dv] {\tiny 19};
\node at (1,1.3333333333) [dv] {\tiny 20};
\node at (1.5000000000,1.4000000000) [dv] {\tiny 21};
\node at (2,1.4666666666) [dvcapdm] {\tiny 22};
\node at (2.5000000000,1.5333333333) [dvcapdm] {\tiny 23};
\node at (3,1.6000000000) [dvcapdm] {\tiny 24};
\node at (3.5000000000,1.6666666666) [dvcapdm] {\tiny 25};
\node at (4,1.7333333333) [dvcapdm] {\tiny 26};
\node at (0,0.6000000000) [dvcapdm] {\tiny 9};
\node at (0.5000000000,0.6666666666) [dvcapdm] {\tiny 10};
\node at (1,0.7333333333) [dvcapdm] {\tiny 11};
\node at (1.5000000000,0.8000000000) [dvcapdm] {\tiny 12};
\node at (2,0.8666666666) [dvcapdm] {\tiny 13};
\node at (2.5000000000,0.9333333333) [gap] {\tiny 14};
\node at (3,1) [gap] {\tiny 15};
\node at (3.5000000000,1.0666666666) [gap] {\tiny 16};
\node at (4,1.1333333333) [gap] {\tiny 17};
\node at (0,0) [dvcapdm] {\tiny 0};
\node at (0.5000000000,0.0666666666) [gap] {\tiny 1};
\node at (1,0.1333333333) [gap] {\tiny 2};
\node at (1.5000000000,0.2000000000) [gap] {\tiny 3};
\node at (2,0.2666666666) [gap] {\tiny 4};
\node at (2.5000000000,0.3333333333) [gap] {\tiny 5};
\node at (3,0.4000000000) [gap] {\tiny 6};
\node at (3.5000000000,0.4666666666) [gap] {\tiny 7};
\node at (4,0.5333333333) [gap] {\tiny 8};
\end{tikzpicture}
\caption{$\mathrm D(m,m+\lambda)$}\label{fig:divisors_42_59}
\end{figure}
\end{center}

\begin{remark}\label{suelo-intervalos}
For the particular case $q=0$ and $0<j<a$, we get
\[ \mathrm D(m,m+j)=\mathrm D(m)\cup \{m-(ka+r)~|~ a-j\leq r\leq a-1,\ 0<r-kb \leq (a+b)-j\}.\]

For $q=1$ and $j=0$,
\[ \mathrm D(m,m+a)=\mathrm D(m)\cup \{m-(ka+r) ~|~ 0\leq r \le a-1,\ 0<r-kb\le b\},\]
which is the same as above by taking $j=a$.

For the case $q=1$, we get
\begin{multline*}
\mathrm D(m, m+a+j)=\mathrm D(m) \\
\cup \left\{m-(ka+r)~|~ 0\leq r\leq a-j-1,\ 0<r-kb \le b-j \right\}\\
\cup \left\{m-(ka+r)~|~ a-j\leq r\leq a-1,\ 0<r-kb \le (a+b)+b-j\right\}.
\end{multline*}

This describes all elements $\mathrm D(m,m+\ell)$, with $\ell\in \{1,\ldots,a+b-1\}$ (i.e., $m+\ell$ in the ground).
\end{remark}

\begin{lemma}
\label{d-L}
Let $0=i_0<i_1<\cdots<i_t<i_{t+1}< a+b$ be such that $\{m,m+i_1,\ldots,m+i_{t+1}\}$ is amenable.
Then
\begin{multline*}
\mathrm D(m,m+i_1,\ldots,m+i_{t+1})=\mathrm D(m,m+i_1,\ldots, m+i_t)\cup\\ \{m-(ka+r)~|~ a-i_{t+1}\leq r\leq a-1-i_t,\ 0< r-kb\leq (a+b)-i_{t+1} \}.\end{multline*}

\end{lemma}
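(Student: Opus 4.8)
The plan is to mimic the proof of Lemma~\ref{div-m+algo} and isolate the divisors that $m+i_{t+1}$ adds. Since $\mathrm D(m)\subseteq\mathrm D(m,m+i_1,\ldots,m+i_t)$, one has $\mathrm D(m,m+i_1,\ldots,m+i_{t+1})=\mathrm D(m,m+i_1,\ldots,m+i_t)\cup\mathrm D(m+i_{t+1})$, so it is enough to describe $\mathrm D(m+i_{t+1})\setminus\mathrm D(m,m+i_1,\ldots,m+i_t)$. First I would turn this into an arithmetic condition: writing $n=m-x$ (which may be negative, namely when $x>m$) and arguing, exactly as in the proof of Lemma~\ref{div-m+algo}, that $m\ge 2c-1$ forces $x\in S$ as soon as $n\notin S$. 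Then $x$ lies in $\mathrm D(m+i_{t+1})\setminus\mathrm D(m,m+i_1,\ldots,m+i_t)$ if and only if $n\notin S$, $n+i_{t+1}\in S$, and $n+i_j\notin S$ for all $j\in\{1,\ldots,t\}$ (the case $j=0$ being the first condition). So the lemma reduces to showing that the set of integers $n$ satisfying these three conditions is exactly $\{ka+r\mid a-i_{t+1}\le r\le a-1-i_t,\ 0<r-kb\le (a+b)-i_{t+1}\}$.

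For the inclusion ``$\supseteq$'' I would take $(k,r)$ in the stated range, set $n=ka+r$, and verify the three conditions with Lemma~\ref{pertenencia-intervalos}, normalizing the residue into $\{0,\ldots,a-1\}$ with a carry whenever needed: $n\notin S$ follows from $0<r-kb$; $n+i_{t+1}\in S$ holds because $r+i_{t+1}\ge a$ produces a carry and the inequality $r-kb\le(a+b)-i_{t+1}$ is precisely what membership then demands; and $n+i_j\notin S$ for $j\le t$ follows from $r+i_j\le r+i_t\le a-1$ together with $0<r-kb$, so that no carry occurs and $n+i_j$ remains a gap.

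For ``$\subseteq$'', given such an $n$ I would write $n=k_0a+r_0$ with $0\le r_0\le a-1$. From $n\notin S$ one gets $r_0-k_0b>0$; then $n+i_{t+1}\in S$ is impossible unless adding $i_{t+1}$ produces a carry, which forces $r_0\ge a-i_{t+1}$ and, after the carry, $r_0-k_0b\le(a+b)-i_{t+1}$; and $n+i_t\notin S$ combined with $r_0-k_0b\le(a+b)-i_{t+1}<(a+b)-i_t$ rules out a carry in $n+i_t$, yielding $r_0+i_t\le a-1$, i.e. $r_0\le a-1-i_t$. The monotonicity $i_1<\cdots<i_t$ makes the remaining conditions $n+i_j\notin S$ ($j<t$) automatic once $r_0\le a-1-i_t$. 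When $i_{t+1}\ge a$ (and likewise when several of the $i_j$ reach $a$) the addition of $i_{t+1}$ may carry \emph{twice}, so one must re-normalize $(k_0,r_0)$ to a representative with negative $r$ lying in the claimed window $[a-i_{t+1},a-1-i_t]$; here the amenability hypothesis is essential, since it forces $m+i_{t+1}-a$ (and, more generally, $m+i_{t+1}-n_\ell$ whenever $\ge m$) to belong to $\{m,m+i_1,\ldots,m+i_t\}$, i.e. $i_{t+1}-a\le i_t$ and analogous relations, which is exactly what is needed to make the window wide enough and to dispose of the second, ``wrapped'' piece that appears in Remark~\ref{suelo-intervalos}.

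The step I expect to be the real obstacle is precisely this carry/amenability bookkeeping in the regime where $i_{t+1}$ or some $i_j$ is at least $a$: one has to check that every ``wrapped'' contribution either re-indexes cleanly into the stated interval in $r$ or is already present in $\mathrm D(m,m+i_1,\ldots,m+i_t)$, so that the surviving new part is exactly the single clean interval $a-i_{t+1}\le r\le a-1-i_t$ with $0<r-kb\le(a+b)-i_{t+1}$. One should also bear in mind that, unlike in Lemma~\ref{div-m+algo}, no disjointness of the displayed union is claimed, so the $(k,r)$-parametrization of the new part may well be redundant; carrying the whole argument at the level of the integers $n$ rather than of pairs $(k,r)$ sidesteps that issue.
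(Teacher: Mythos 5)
Your opening reduction is correct and matches the paper: since $\mathrm D(m)\subseteq\mathrm D(m,m+i_1,\ldots,m+i_t)$, the lemma amounts to describing $\mathrm D(m+i_{t+1})\setminus\mathrm D(m,m+i_1,\ldots,m+i_t)$, and your translation of this to the arithmetic conditions $n\notin S$, $n+i_{t+1}\in S$, $n+i_j\notin S$ for $j\in\{1,\ldots,t\}$ (with $n=m-x$ and $x\in S$ forced by $m\ge 2c-1$, as in Lemma~\ref{div-m+algo}) is also correct. Your verification of ``$\supseteq$'' and the no-carry part of ``$\subseteq$'' is fine in the regime where $i_{t+1}-i_j<a$ for every $j$, which is the first case of the paper's proof.

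The difficulty is that you have only flagged, not resolved, the case $i_{t+1}>a$, and this is precisely where the amenability hypothesis does its work and where the proof has real content. The paper handles it by writing the difference as $\bigcap_{j=0}^t\bigl(\mathrm D(m+i_{t+1})\setminus\mathrm D(m+i_j)\bigr)$, using Remark~\ref{suelo-intervalos} for each factor, and splitting at the index $s$ with $i_{t+1}-i_s<a\le i_{t+1}-i_{s-1}$: for $j\ge s$ each factor is a single clean $r$-interval, while for $j<s$ it is a union of two intervals, the second being the ``wrapped'' piece you mention. After the change of variable $r=r'-i_j$, the intersection $C$ of the clean factors has $r$-range $[a-i_{t+1},\,a-i_t-1]$, and the key computation is that $C$ misses every wrapped piece (whose $r$-range starts at $2a-i_{t+1}$) because $a-i_t-1\le 2a-i_{t+1}-1$, i.e.\ $i_{t+1}-i_t\le a$, which is Proposition~\ref{prop:amenable_first_consequences}(b). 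That disjointness check is what your sentence ``which is exactly what is needed \ldots\ to dispose of the second, wrapped piece'' is standing in for, and without it your ``$\subseteq$'' direction is incomplete. A minor secondary issue: in your ``$\supseteq$'' step the phrase ``so that no carry occurs'' is not literally true when $a-i_{t+1}\le r<0$ and $i_j<-r$; the conclusion $n+i_j\notin S$ still holds, but via the carried representative $(k-1)a+(a+r+i_j)$, and this should be said.
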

\begin{proof}
Assume first that $i_{t+1}\le a$. Note that $\mathrm D(m+i_{t+1})\setminus(\mathrm D(m,m+i_1,\ldots,m+i_t))=\bigcap_{j=0}^t \mathrm D(m+i_{t+1})\setminus \mathrm D(m+i_j)$, and this equals
\[
\bigcap_{j=0}^t \{m+i_j-(ka+r') ~|~ a-(i_{t+1}-i_j)\leq r'\leq a-1,\ 0< r'-kb \le (a+b)-(i_{t+1}-i_j)\}
\]
(Remark \ref{suelo-intervalos}). If we make the change of variables $r=r'-i_j$ for each $j$,
we obtain
\[
\bigcap_{j=0}^t \{m-(ka+r) ~|~ a-i_{t+1}\leq r\leq a-1-i_j,\ -i_j< r-kb \le (a+b)-i_{t+1}\}.
\]
Intersecting means choosing the least intervals for $r$ and $r-kb$, and we get the desired result.

Now assume that $a<i_{t+1}< a+b$. By hypothesis there exists $s$ such that $i_{t+1}-i_s< a$ and $i_{t+1}-i_{s-1}\ge a$ (by amenability). 
For $i_{t+1}-i_j\ge a$, write $i_{t+1}-i_j=a+h_j$. Hence $\bigcap_{j=0}^t \mathrm D(m+i_{t+1})\setminus \mathrm D(m+i_j)$ equals
\begin{multline*}
\bigcap_{j=s}^t \{m+i_j-(ka+r') ~|~ a-(i_{t+1}-i_j)\leq r'\leq a-1,\ 0< r'-kb \le (a+b)-(i_{t+1}-i_j)\}\\
\bigcap \Big( \bigcap_{j=0}^{s-1} \big( \{m+i_j-(ka+r') ~|~ 0\leq r'\leq a-(i_{t+1}-i_j-a)-1,\ 0< r'-kb \le b-(i_{t+1}-i_j-a)\} \\
\cup \{m+i_j-(ka+r') ~|~ a-(i_{t+1}-i_j-a)\leq r'\leq a-1,\ 0< r'-kb \le (a+b)+b-(i_{t+1}-i_j-a)\} \big) \Big).
\end{multline*}
If we perform again the change of variables $r=r'-i_j$, we obtain that $C=\bigcap_{j=s}^t \{m+i_j-(ka+r') ~|~ a-(i_{t+1}-i_j)\leq r'\leq a-1,\ 0< r'-kb \le (a+b)-(i_{t+1}-i_j)\} = \{m-(ka+r) ~|~ a-i_{t+1}\leq r\leq a-i_t-1,\ -i_s< r-kb \le (a+b)-i_{t+1}\}$. Analogously, for every $j\in \{0,\ldots,s-1\}$,
\begin{multline*}
\{m+i_j-(ka+r') ~|~ 0\leq r'\leq a-(i_{t+1}-i_j-a)-1,\ 0< r'-kb \le b-(i_{t+1}-i_j-a)\} \\
\cup \{m+i_j-(ka+r') ~|~ a-(i_{t+1}-i_j-a)\leq r'\leq a-1,\ 0< r'-kb \le (a+b)+b-(i_{t+1}-i_j-a)\}
\end{multline*}
equals
\begin{multline*}
\{m-(ka+r) ~|~ -i_j\leq r\leq 2a-i_{t+1}-1,\ -i_j< r-kb \le a+b-i_{t+1}\} \\
\cup \{m-(ka+r) ~|~ 2a-i_{t+1}\leq r\leq a-i_j-1,\ -i_j< r-kb \le 2(a+b)-i_{t+1}\}.
\end{multline*}
Observe that $a-i_t-1\le 2a-i_{t+1}-1$ if and only if $i_{t+1}-i_t\le a$, which is the case since we are using an amenable set.
Hence $C$ does not cut the second set in the above union, and the whole intersection is as in the case $i_{t+1}<a$.
\end{proof}

\begin{corollary}\label{cuantos-mas}
Let $0=i_0<i_1<\cdots<i_t<i_{t+1}< a+b$. Then
\[\sharp\mathrm D(m,m+i_1,\ldots,m+i_{t+1})=\sharp\mathrm D(m,m+i_1,\ldots, m+i_t)+  \sum_{j=i_t+1}^{i_{t+1}} \left\lceil \frac{a+b-j}b\right\rceil - \left\lceil \frac{i_{t+1}-j}b\right\rceil.\]
\end{corollary}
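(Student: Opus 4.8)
The plan is to read the formula off from Lemma~\ref{d-L}. That lemma exhibits $\mathrm D(m,m+i_1,\ldots,m+i_{t+1})$ as the union of $\mathrm D(m,m+i_1,\ldots,m+i_t)$ with the set
\[
N=\{m-(ka+r)~|~ a-i_{t+1}\leq r\leq a-1-i_t,\ 0<r-kb\leq (a+b)-i_{t+1}\},
\]
and in the proof of that lemma $N$ is exhibited as $\mathrm D(m+i_{t+1})\setminus\mathrm D(m,m+i_1,\ldots,m+i_t)$, so the union is disjoint. Hence
\[
\sharp\mathrm D(m,m+i_1,\ldots,m+i_{t+1})=\sharp\mathrm D(m,m+i_1,\ldots,m+i_t)+\sharp N,
\]
and everything reduces to showing that $\sharp N$ equals the stated sum.

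To count $N$ I would first note that the map $(k,r)\mapsto m-(ka+r)$ is injective on the index set: the admissible values of $r$ form the integer interval $[a-i_{t+1},a-1-i_t]$, of diameter $i_{t+1}-i_t-1<a$ (with the hypotheses of Lemma~\ref{d-L} in force, consecutive gaps of the amenable set are at most $\rho_2=a$), so $ka+r=k'a+r'$ with $r,r'$ in this interval forces $(k,r)=(k',r')$. Fixing such an $r$, the inequality $0<r-kb\leq (a+b)-i_{t+1}$ is equivalent to $\frac{r-(a+b-i_{t+1})}{b}\leq k<\frac{r}{b}$; since the number of integers in a half-open interval $[\alpha,\beta)$ is $\lceil\beta\rceil-\lceil\alpha\rceil$, this gives
\[
\sharp N=\sum_{r=a-i_{t+1}}^{a-1-i_t}\left(\left\lceil\frac{r}{b}\right\rceil-\left\lceil\frac{r-(a+b-i_{t+1})}{b}\right\rceil\right).
\]

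Finally I would reindex by $j=a-r$, so that $j$ runs through $\{i_t+1,\ldots,i_{t+1}\}$, and simplify each summand using $\lceil x/b+1\rceil=\lceil x/b\rceil+1$: from $r=a-j$ one gets $\left\lceil\frac{r}{b}\right\rceil=\left\lceil\frac{a+b-j}{b}\right\rceil-1$ and $\left\lceil\frac{r-(a+b-i_{t+1})}{b}\right\rceil=\left\lceil\frac{i_{t+1}-j-b}{b}\right\rceil=\left\lceil\frac{i_{t+1}-j}{b}\right\rceil-1$, so the two $-1$'s cancel and the summand becomes $\left\lceil\frac{a+b-j}{b}\right\rceil-\left\lceil\frac{i_{t+1}-j}{b}\right\rceil$, which is exactly the claimed formula.

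The argument is mostly bookkeeping once Lemma~\ref{d-L} is available; the points that need care are the disjointness of the union (inherited from that lemma's proof), the injectivity of the $(k,r)$-parametrisation, and getting the ceiling-function identities and the substitution $j=a-r$ through correctly. I do not anticipate a real obstacle beyond these.
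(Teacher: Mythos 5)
Your proof is correct and follows the same route the paper takes: read the disjoint decomposition off Lemma~\ref{d-L}, count the new set $N$ via the $(k,r)$-parametrisation, substitute $j=a-r$, and use ceiling identities. The only cosmetic difference is that the paper argues injectivity via uniqueness of the division with $0\le r<a$ while you argue via the diameter of the $r$-interval being less than $a$; both are fine (yours is in fact slightly more robust when $i_{t+1}>a$, where $a-i_{t+1}<0$).
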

\begin{proof}
We compute the cardinality of $\{m-(ka+r)~|~ a-i_{t+1}\leq r\leq a-1-i_t,\ 0< r-kb\leq (a+b)-i_{t+1} \}$. Note that $m-(ka+r)=m-(k'a+r')$ with $0\leq r,r'<a$ implies that $k=k'$ and $r=r'$. Thus we must calculate $\sharp \{(k,r)~|~ a-i_{t+1}\leq r\leq a-1-i_t,\ 0< r-kb\leq (a+b)-i_{t+1} \}$, which equals $\sharp\{(k,r)~|~ i_t+1\leq a-r\leq i_{t+1},\ \frac{i_{t+1}+r-(a+b)}b\leq k < \frac{r}b\}$. By taking $j=a-r$, we get \[\sum_{j=i_{t}+1}^{i_{t+1}}\sharp \Big\{k~|~ \frac{i_{t+1}-j}b-1\leq k < \frac{a-j}b\Big\}= \sum_{j=i_t+1}^{i_{t+1}} \Big(\big(\Big\lceil \frac{a-j}b\Big\rceil-1\big)-\big(\Big\lceil \frac{i_{t+1}-j}b\Big\rceil-1\big)+1\Big),\]
and the proof follows easily.
\end{proof}

\begin{remark}\label{varias-expresiones}
 Recall that \[\sharp\mathrm D(m,m+i_1,\ldots,m+i_{t})=\sharp\mathrm D(m,m+i_1,\ldots, m+i_{t-1})+ \sum_{j=i_{t-1}+1}^{i_{t}} \left\lceil \frac{a+b-j}b\right\rceil - \left\lceil \frac{i_{t}-j}b\right\rceil,\]
and by applying several times this process we obtain
\begin{multline*}
\sharp\mathrm D(m,m+i_1,\ldots,m+i_{t})=\sharp\mathrm D(m)+ \sum_{k=1}^t \sum_{j=i_{k-1}+1}^{i_{k}} \left\lceil \frac{a+b-j}b\right\rceil - \left\lceil \frac{i_{k}-j}b\right\rceil  \\= \sharp \mathrm D(m)+ \sum_{j=1}^{i_t} \pe[\frac{a+b-j}b] - \sum_{k=1}^t \sum_{j=i_{k-1}+1}^{i_{k}} \left\lceil \frac{i_{k}-j}b\right\rceil.
\end{multline*}

And thus
\[\sharp\mathrm D(m,m+i_1,\ldots,m+i_{t})=\sharp \mathrm D(m)+ \sum_{j=1}^{i_t} \pe[\frac{a+b-j}b] - \sum_{k=1}^t \sum_{j=1}^{i_{k}-i_{k-1}} \left\lceil \frac{(i_{k}-i_{k-1})-j}b\right\rceil.\]
If we write $d_k=i_k-i_{k-1}$, this rewrites as
\[\sharp\mathrm D(m,m+i_1,\ldots,m+i_{t})=\sharp \mathrm D(m)+ \sum_{j=1}^{i_t} \pe[\frac{a+b-j}b] - \sum_{k=1}^t \sum_{j=1}^{d_k} \left\lceil \frac{d_k-j}b\right\rceil.\]
Hence the value of $\sharp\mathrm D(m,m+i_1,\ldots,m+i_t)$ depends on $d_1,\ldots,d_t$, subject to $\sum_{k=1}^t d_k=i_t$.
\end{remark}

\begin{corollary}\label{cuantos-mas-seguidos}
Let $t\in\{1,\ldots,a+b-1\}$. Then
\[\sharp\mathrm D(m,m+1,\ldots,m+t)=\sharp \mathrm D(m)+\sum_{j=1}^t \left\lceil \frac{a+b-j}b\right\rceil.\]
\end{corollary}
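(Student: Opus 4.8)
The plan is to deduce this as an immediate specialization of Corollary~\ref{cuantos-mas} (equivalently, of the last identity in Remark~\ref{varias-expresiones}). Before invoking those results I would make one small observation explicit, since the proof of Lemma~\ref{d-L} (and hence of Corollary~\ref{cuantos-mas}) uses amenability: for $t\le a+b-1$ the configuration $\{m,m+1,\ldots,m+t\}$ is $(S,m,t+1)$-amenable. Indeed, the only elements of $S$ in $[0,a+b-1]$ are $0$ together with $a,a+1,\ldots,a+b-1$, so every divisor of $m+i$ lying in $[m,\infty)$ has the form $m+i-\alpha$ with $\alpha\in S$ and $0\le\alpha\le i$, hence belongs to $[m,m+i]\subseteq[m,m+t]$. (This is also recorded in the Example following Definition~\ref{def:amenable}.)

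Then I would argue by induction on $t$. Applying Corollary~\ref{cuantos-mas} with $i_k=k$ for $0\le k\le t$, so that $t$ plays the role of $i_{t+1}$ and $t-1$ that of $i_t$, the sum on the right-hand side has a single term and
\[
\sharp\mathrm D(m,m+1,\ldots,m+t)=\sharp\mathrm D(m,m+1,\ldots,m+t-1)+\left(\left\lceil\frac{a+b-t}{b}\right\rceil-\left\lceil\frac{t-t}{b}\right\rceil\right)=\sharp\mathrm D(m,m+1,\ldots,m+t-1)+\left\lceil\frac{a+b-t}{b}\right\rceil,
\]
since $\lceil 0/b\rceil=0$. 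Feeding in the induction hypothesis and telescoping yields the claimed formula, the base case $t=1$ being the same computation with $\sharp\mathrm D(m)$ in place of the inductive term. Alternatively, and even more directly, I would read the formula off the last display of Remark~\ref{varias-expresiones} with $i_t=t$ and all consecutive differences $d_k=i_k-i_{k-1}=1$: then $\sum_{k=1}^t\sum_{j=1}^{d_k}\lceil (d_k-j)/b\rceil=\sum_{k=1}^t\lceil 0/b\rceil=0$, so the whole correction term vanishes and only $\sharp\mathrm D(m)+\sum_{j=1}^{t}\lceil (a+b-j)/b\rceil$ survives.

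There is no genuine obstacle here; the statement is a corollary in the strict sense, and the only point requiring care is that the hypotheses of the counting lemmas (in particular the amenability used inside Lemma~\ref{d-L}) are met, which is why I would verify amenability of the interval first. The restriction $t\le a+b-1$ is exactly what keeps the whole configuration inside the ground $[m,m+a+b)$, so that this quantity is precisely $\sharp\mathrm D(L)$ for the shadow $L=[m,m+t]\cap\N$ — the case that feeds into the main theorem.
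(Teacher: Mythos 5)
Your proof is correct and takes the same route as the paper, which presents this corollary without a proof block as an immediate specialization of Corollary~\ref{cuantos-mas} (equivalently, of the last identity in Remark~\ref{varias-expresiones}) to $i_k=k$, where every $d_k=1$ makes the correction term vanish. Your extra care in verifying amenability of $\{m,m+1,\ldots,m+t\}$ — so that Lemma~\ref{d-L}, which underlies Corollary~\ref{cuantos-mas}, is applicable — is a welcome precaution that the paper leaves implicit but that is covered by the Example following Definition~\ref{def:amenable}.
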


As a consequence of this, when the shadow of a configuration is an interval containing $m$, then we can compute the number of divisors of its elements. 

\begin{corollary}\label{cor:divs_intervals}
 Let $m$ be an integer greater than or equal to $2c-1$. Let $m=m_1<m_2<\cdots<m_t$ be integers such that $\{m_1,m_2,\ldots,m_t\}$ is amenable.
Assume that $l=\sharp\{m_1,\ldots,m_t\}\cap [m,m+a+b)=\{m,m+1,\ldots,m+l-1\}$. Then
\[ \sharp \mathrm D(m_1,\ldots,m_t)= m-2g+t+\sum_{j=1}^{l-1}\pe[\frac{a-j}b].\]
\end{corollary}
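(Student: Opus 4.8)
The plan is to reduce the statement to the shadow of the configuration via Lemma~\ref{suelo-general}, and then to evaluate the number of divisors of the shadow with Corollary~\ref{cuantos-mas-seguidos}. Since $S=\langle a,\ldots,a+b\rangle$ has $n_1=a$ and $n_e=a+b$, the $(S,m)$-ground is the interval $[m,m+a+b)\cap\N$, so the hypothesis says exactly that the shadow $L=\{m_1,\ldots,m_t\}\cap[m,m+a+b)$ equals $\{m,m+1,\ldots,m+l-1\}$, an interval of $l$ consecutive integers starting at $m$; in particular $l\le a+b$. By Lemma~\ref{suelo-general},
\[
\sharp\mathrm D(m_1,\ldots,m_t)=\sharp\big(\{m_1,\ldots,m_t\}\setminus L\big)+\sharp\mathrm D(L)=(t-l)+\sharp\mathrm D(m,m+1,\ldots,m+l-1).
\]

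Next I would compute $\sharp\mathrm D(m,m+1,\ldots,m+l-1)$. If $l=1$ this is just $\sharp\mathrm D(m)$ and the empty-sum convention makes the claimed formula trivially correct, so assume $l\ge 2$. Applying Corollary~\ref{cuantos-mas-seguidos} with its parameter taken to be $l-1$ (legitimate since $1\le l-1\le a+b-1$) gives
\[
\sharp\mathrm D(m,m+1,\ldots,m+l-1)=\sharp\mathrm D(m)+\sum_{j=1}^{l-1}\left\lceil\frac{a+b-j}{b}\right\rceil.
\]
Then I would invoke the two elementary facts that $\sharp\mathrm D(m)=m+1-2g$ (Proposition~\ref{prop:well-known}, valid because $m\ge 2c-1$) and that $\lceil (a+b-j)/b\rceil=\lceil (a-j)/b\rceil+1$ for every integer $j$, whence $\sum_{j=1}^{l-1}\lceil (a+b-j)/b\rceil=(l-1)+\sum_{j=1}^{l-1}\lceil (a-j)/b\rceil$.

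Substituting these into the displays above yields
\[
\sharp\mathrm D(m_1,\ldots,m_t)=(t-l)+(m+1-2g)+(l-1)+\sum_{j=1}^{l-1}\left\lceil\frac{a-j}{b}\right\rceil=m-2g+t+\sum_{j=1}^{l-1}\left\lceil\frac{a-j}{b}\right\rceil,
\]
as claimed. There is no serious obstacle: the genuine work has already been done in Lemma~\ref{suelo-general} (splitting the divisors of an amenable set through its shadow) and in the counting Corollary~\ref{cuantos-mas-seguidos}. The only points that need care are the bookkeeping of the off-by-one (the interval $\{m,\ldots,m+l-1\}$ has $l$ elements but ``length'' $l-1$), the ceiling identity, and recording the degenerate case $l=1$ separately.
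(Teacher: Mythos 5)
Your proof is correct and uses exactly the three ingredients the paper cites (Lemma~\ref{suelo-general}, Corollary~\ref{cuantos-mas-seguidos}, and Proposition~\ref{prop:well-known}); you have merely spelled out the arithmetic, including the ceiling identity $\lceil(a+b-j)/b\rceil=\lceil(a-j)/b\rceil+1$ and the $l=1$ edge case, which the paper leaves implicit.
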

\begin{proof}
  This is a direct consequence of Proposition \ref{prop:well-known}, Lemma \ref{suelo-general}, and Corollary \ref{cuantos-mas-seguidos}.
\end{proof}
Indeed, we will show that among these configurations there is an optimal one. To do this, we first prove that the best shadows are those of the form $\{m,m+1,\ldots,m+l-1\}$, and later (in the next section) we will have to compute the smallest possible value of $l$.

Let us see how to compute sums of the form $\sum_{j=1}^t \left\lceil \frac{a-j}b\right\rceil$. 

\begin{lemma}\label{sumatoria-pe}
Let $x$ and $y$ be postive integers. Assume that $y=cb+r$ with $c$ an integer and $0\leq r<b$, and that $k$ is an integer such that $kb \le x-r <(k+1)b$. Then
\begin{enumerate}
\item if $r\neq 0$, $\sum_{j=1}^x \pe[\frac{y-j}b] = (c+1)(r-1)+b\sum_{i=0}^{k-1} (c-i)+(x-(kb+r)+1)(c-k)=x(c-k)+(k+1)(r-1)+b\frac{k(k+1)}2$,
\item if $r=0$, $\sum_{j=1}^x \pe[\frac{y-j}b] = -c+b\sum_{i=0}^{k-1} (c-i)+(x-kb+1)(c-k)=
(x+1)(c-k)+b\frac{k(k+1)}2-c$.
\end{enumerate}
\end{lemma}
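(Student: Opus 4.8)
\textbf{Proof plan for Lemma~\ref{sumatoria-pe}.}
The strategy is to evaluate the sum $\sum_{j=1}^x\pe[\frac{y-j}b]$ by grouping the summands according to the value of the ceiling, exploiting that $\pe[\frac{y-j}b]$ is a non-increasing step function of $j$ that decreases by $1$ each time $j$ crosses a multiple of $b$ shifted by $r$. First I would record the elementary identity $\pe[\frac{y-j}b]=c-\peb[\frac{j-r-1}b]$ when $r\neq 0$ (respectively $\pe[\frac{y-j}b]=c-\peb[\frac{j-1}b]$ when $r=0$), obtained by writing $y-j=cb+(r-j)$ and simplifying the ceiling of $\frac{r-j}b$; this turns the problem into summing a floor function, which is constant on blocks of length $b$.

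Next I would partition the index range $\{1,\ldots,x\}$ into the initial short block and the subsequent full blocks of length $b$. Concretely, when $r\neq 0$ the value of the summand is $c$ for $j\in\{1,\ldots,r-1\}$ (that is $r-1$ terms), then equals $c-1-i$ on the $i$-th full block $\{r+ib,\ldots,r+(i+1)b-1\}$ for $i=0,\ldots,k-1$ (each of length $b$), and finally equals $c-k$ on the tail $\{r+kb,\ldots,x\}$, which has $x-(kb+r)+1$ terms by the hypothesis $kb\le x-r<(k+1)b$. Summing these three contributions gives exactly the first displayed expression $(c+1)(r-1)+b\sum_{i=0}^{k-1}(c-i)+(x-(kb+r)+1)(c-k)$; here I must be careful that on the first block the summand is $c$, not $c+1$, so the $(c+1)(r-1)$ must be re-examined — in fact a cleaner bookkeeping shows the first block contributes $c(r-1)$ and the stated formula presumably absorbs an off-by-one from a slightly different blocking, so I would double-check the boundary indices against a small numerical example before committing. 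The case $r=0$ is the same computation with the degenerate first block of length zero (replaced by the single correction term $-c$ coming from the $j$ that lands exactly on a block boundary).

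Finally, to obtain the closed form on the right-hand side, I would simplify $b\sum_{i=0}^{k-1}(c-i)=bck-b\frac{k(k-1)}2$ and collect terms: the arithmetic-series contribution combines with $b(c-k)$ times the tail length, and after expanding $x-(kb+r)+1$ the $b$-quadratic terms consolidate into $b\frac{k(k+1)}2$ and the linear-in-$x$ term becomes $x(c-k)$. This is pure algebra once the grouping is correct. The only genuine obstacle is the careful handling of the endpoints of the blocks and the separate treatment of $r=0$ versus $r\neq 0$ — the quantities $k$, $c$, $r$ are coupled through the floor/ceiling inequalities, so I would verify the block lengths sum to $x$ (i.e.\ $(r-1)+kb+(x-kb-r+1)=x$) as a sanity check, and likewise in the $r=0$ case that the lengths sum correctly, to make sure no summand is counted twice or omitted.
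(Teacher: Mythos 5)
Your overall strategy — partition $\{1,\dots,x\}$ into an initial short block, $k$ full blocks of length $b$, and a short tail, and compute the constant value of the ceiling on each — is exactly the paper's approach. But the block values you assign are each off by one, and the errors do not cancel.

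Concretely: for $j\in\{1,\dots,r-1\}$ we have $y-j=cb+(r-j)$ with $0<r-j<b$, so $\pe[\frac{y-j}{b}]=c+\pe[\frac{r-j}{b}]=c+1$, not $c$; the paper's term $(c+1)(r-1)$ is therefore correct and needs no ``absorbed off-by-one.'' Likewise on the $i$-th full block $\{r+ib,\dots,r+(i+1)b-1\}$, writing $y-j=(c-i)b-s$ with $0\le s\le b-1$ gives $\pe[\frac{y-j}{b}]=c-i$, not $c-1-i$. The preliminary identity also has an off-by-one: the correct reduction is $\pe[\frac{y-j}{b}]=c+\pe[\frac{r-j}{b}]=c-\peb[\frac{j-r}{b}]$, not $c-\peb[\frac{j-r-1}{b}]$ (test $j=r$: the left side is $c$, your formula gives $c+1$). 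With your stated values the total would be $c(r-1)+b\sum_{i=0}^{k-1}(c-1-i)+(x-kb-r+1)(c-k)$, which falls short of the true sum by $r-1+kb$. You flagged the boundary bookkeeping as a worry and that instinct is right, but the diagnosis is inverted: your $c(r-1)$ and $c-1-i$ are the ones that need correcting, not the paper's. Once you replace them by $(c+1)(r-1)$ and $c-i$, the rest of your plan — the tail of length $x-kb-r+1$ contributing $(c-k)$ each, the degenerate first block and single $-c$ correction when $r=0$, and the algebra $b\sum_{i=0}^{k-1}(c-i)=bck-b\frac{k(k-1)}{2}$ collapsing to the closed form — goes through exactly as in the paper.
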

\begin{proof}
 Observe that
   \[\sum_{j=1}^x \pe[\frac{y-j}b] = \sum_{j=1}^{r-1} \pe[\frac{y-j}b] + \sum_{j=r}^{r+b-1}\pe[\frac{y-j}b]+\cdots + \sum_{j=(k-1)b+r}^{kb+r-1}\pe[\frac{y-j}b] + \sum_{j=kb+r}^x\pe[\frac{y-j}b],\]
and
  \[\sum_{j=r+lb}^{(l+1)b+r-1} \pe[\frac{y-j}b] = \sum_{j=0}^{b-1}\pe[\frac{y-(r+lb)-j}b] = \sum_{j=0}^{b-1}\pe[\frac{(c-l)b-j}b] = b(c-l).\]
In the same way the first and last summand are computed. If $r=0$, the first summand does not appear, and the second sum starts on $0$, and so we have to decrease the total amount by $\pe[\frac{cb}b]=c$.
\end{proof}

Actually, as we see next it suffices to consider the following type of sums.

\begin{remark}
 For the case $x=y$, we get $k=c$ and
\begin{enumerate}
\item if $r\neq 0$, $\sum_{j=1}^x \pe[\frac{x-j}b] = (c+1)(r-1)+b\frac{c(c+1)}2=\frac{c+1}2(x+r)-c-1$,
\item if $r=0$, $\sum_{j=1}^x \pe[\frac{x-j}b] = \frac{c+1}2x-c$.
\end{enumerate}
Observe also that $\sum_{j=1}^x \pe[\frac{x-j}b]=\sum_{j=1}^{x-1} \pe[\frac{j}b]$.
\end{remark}

The following trick will allow us to prove that the best possible shadows are those that are intervals starting in $m$.

\begin{remark} 
Let $d_k=i_k-i_{k-1}$, $i\in \{1,\ldots, t\}$. Then $\sum_{k=1}^t d_k=i_t$. If we replace $\{d_1,d_2\}$ with $\{1,d_1+d_2-1\}$, the total sum of the $d_k$'s remains the same (we are thus assuming that both $d_1$ and $d_2$ are greater than one). Let us see what happens to
\[\sum_{k=1}^t \sum_{j=1}^{i_{k}-i_{k-1}} \left\lceil \frac{(i_{k}-i_{k-1})-j}b\right\rceil = \sum_{k=1}^t \sum_{j=1}^{d_k} \pe[\frac{d_k-j}b].\]

Write $d_k=c_k b+r_k$, with $c_k$ and integer and $0\leq r_k<b$. Set  $s_k=\sum_{j=1}^{d_k} \pe[\frac{d_k-j}b]$. Then $s_k= \frac{c_k+1}2(d_k+r_k)-1-c_k$, if $r_k\neq 0$, and $s_k=\frac{c_k+1}2d_k-c_k$, otherwise. Let $c$ and $r$ be the quotient and remainder of the division of $d_1+d_2-1$ by $b$. Let $\Delta=\sum_{j=1}^{d_1+d_2-1} \pe[\frac{d_1+d_2-1-j}b] -s_1- s_2$. If $b=1$, then $r_1=r_2=r=0$, $c_i=d_i$, $c=d_1+d_2-1$, and $\Delta=d_1d_2$, which is a nonnegative integer. For $b>1$ we distinguish three cases depending on the value of $r_1+r_2$.

\begin{itemize}
 \item If $r_1+r_2=0$ (this means $r_1=r_2=0$), then $d_1=c_1b$, $d_2=c_2b$, $c=c_1+c_2-1$, and $r=b-1$. Then $\Delta= b c_1 c_2- (c_1+c_2)$. Since we are assuming that $b\geq 2$, and $c_1$ and $c_2$ are positive integers, this amount is nonnegative.

\item If $0<r_1+r_2\leq b$, then $c=c_1+c_2$ and $r=r_1+r_2-1$. Thus if $rr_1r_2\neq0$, $\Delta=bc_1c_2+c_1(r_2-1)+c_2(r_1-1)$, which is greater than or equal to zero. For $r=0$, either $r_1=0$ (and $r_2=1$) or $r_2=0$ (and $r_1=1$). Assume without loss of generality that $r_1=0$. We obtain $\Delta=(bc_1-1)c_2$, which is again nonnegative.

\item Finally if $r_1+r_2\geq b+1$, then $c=c_1+c_2+1$ and $r=r_1+r_2-b-1$. In this setting $r_1\neq0\neq r_2$. If $r\neq0$, then $\Delta = bc_1c_2+c_1(r_2-1)+c_2(r_1-1)+r_1+r_2-(b+2)$, which is nonnegative since $r_1+r_2\ge b+2$. For $r_1+r_2=b+1$ ($r=0$), we obtain a nonnegative $\Delta=bc_1c_2+c_1(r_2-1)+c_2(r_1-1)$.
\end{itemize}
\end{remark}

With all this in mind, we are able to prove the main result of this section, that is, if the elements in the ground form an interval containing $m$, then we get the least possible number of divisors.

\begin{lemma} Let $0=i_0<i_1<\cdots<i_t< a+b$. Then
\label{seguidos-dan-menos}
 \[\sharp\mathrm D(m,m+i_1,\ldots,m+i_t)\ge \sharp \mathrm D(m,m+1,\ldots,m+t). \]
\end{lemma}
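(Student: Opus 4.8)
The plan is to reduce the general claim to the ``trick'' established in the remark immediately preceding the lemma. By Remark~\ref{varias-expresiones}, once we know that $\{m,m+i_1,\ldots,m+i_t\}$ lies in the ground (all $i_k<a+b$), we have
\[
\sharp\mathrm D(m,m+i_1,\ldots,m+i_t)=\sharp\mathrm D(m)+\sum_{j=1}^{i_t}\pe[\frac{a+b-j}b]-\sum_{k=1}^t\sum_{j=1}^{d_k}\pe[\frac{d_k-j}b],
\]
where $d_k=i_k-i_{k-1}$ and $\sum_k d_k=i_t$. Since the first two terms on the right depend only on $i_t=\sum_k d_k$, minimizing the number of divisors over all admissible tuples $(i_1,\ldots,i_t)$ with fixed length $t$ amounts to \emph{maximizing} $\sum_{k=1}^t\sum_{j=1}^{d_k}\pe[\frac{d_k-j}b]$ subject to $d_k\ge 1$ and $\sum_k d_k=i_t$. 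The interval $\{m,m+1,\ldots,m+t\}$ corresponds to the choice $i_t=t$, i.e. all $d_k=1$ (for which the inner sums all vanish). So what I actually need to show is: among all tuples of positive integers $(d_1,\ldots,d_t)$, the value of $\sharp\mathrm D$ is minimized when every $d_k=1$.

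\textbf{Key steps.} First I would observe that, by the preceding remark, replacing a pair $\{d_1,d_2\}$ (both $>1$) by $\{1,d_1+d_2-1\}$ leaves $\sum_k d_k$ unchanged and \emph{increases} (weakly) the quantity $\sum_k\sum_{j=1}^{d_k}\pe[\frac{d_k-j}b]$, because $\Delta\ge 0$ in every case treated there (and the same inequality holds for any pair $\{d_a,d_b\}$, not just the first two, by symmetry of the sum). Hence each such move does not increase $\sharp\mathrm D(m,m+i_1,\ldots,m+i_t)$. Second, I would run this move repeatedly: starting from an arbitrary tuple $(d_1,\ldots,d_t)$, as long as at least two of the $d_k$ exceed $1$, pick two of them and apply the replacement; this strictly increases the number of indices $k$ with $d_k=1$, so after finitely many steps we reach a tuple in which at most one $d_k$ is $>1$, say $d_1=i_t-t+1$ and $d_2=\cdots=d_t=1$. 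Third, I would handle this remaining ``staircase'' configuration: if $i_t>t$, one more comparison is needed to pass from $(i_t-t+1,1,\ldots,1)$ down to $(1,1,\ldots,1)$, i.e. to show that decreasing $i_t$ while keeping $t$ fixed also does not help. But this is immediate from the displayed formula once we note that, going from $i_t$ to $i_t-1$ (with the extra unit absorbed into a block of size $1$, which contributes $0$), the change in $\sharp\mathrm D$ is $\pe[\frac{a+b-i_t}b]\ge 0$ when $1\le i_t\le a+b-1$ — so the full interval $i_t=t$ is best. Chaining these, $\sharp\mathrm D(m,m+i_1,\ldots,m+i_t)\ge\sharp\mathrm D(m,m+1,\ldots,m+t)$.

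\textbf{Main obstacle.} The only real content is the inequality $\Delta\ge 0$ for the pair-replacement move, and that has essentially been discharged in the remark preceding the statement by the three-case analysis on $r_1+r_2$ (plus the $b=1$ case). The bookkeeping obstacle I anticipate is making the reduction rigorous: I must confirm that after the replacement the new indices $0=i_0'<i_1'<\cdots<i_t'<a+b$ still satisfy the ground constraint (they do, since $i_t'=i_t<a+b$ is preserved) and that the sum formula of Remark~\ref{varias-expresiones} applies verbatim to the new tuple (it does, as it only used $i_t<a+b$ and the block decomposition). One subtlety worth a sentence: the formula in Remark~\ref{varias-expresiones} is stated for an arbitrary increasing sequence in $[0,a+b)$ with no amenability hypothesis, so no amenability check is needed here — the lemma is purely about cardinalities of divisor sets of ground configurations, and amenability only re-enters later when these counts are matched against $\delta^r(m)$.
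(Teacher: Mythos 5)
Your first two steps coincide with the paper's: write $\sharp\mathrm D(m,m+i_1,\ldots,m+i_t)$ via Remark~\ref{varias-expresiones} in terms of the gaps $d_k=i_k-i_{k-1}$, then iterate the replacement $\{d_1,d_2\}\mapsto\{1,d_1+d_2-1\}$ using $\Delta\ge 0$ to arrive at the staircase $(d_1',1,\ldots,1)$ with $d_1'=i_t-t+1$. Your third step, however, is incorrect. You claim that decreasing $i_t$ by $1$ (hence $d_1'$ by $1$) changes $\sharp\mathrm D$ by $\pe[\frac{a+b-i_t}b]\ge 0$, on the grounds that ``the extra unit is absorbed into a block of size $1$.'' But the block that shrinks is $d_1'$, not a block of size $1$, so the subtracted term $\sum_{j=1}^{d_1'}\pe[\frac{d_1'-j}b]$ in the formula changes too. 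The true one-step change is $\pe[\frac{a+b-i_t}b]-\pe[\frac{d_1'-1}b]$, which is \emph{not} always nonnegative. Concretely, take $S=\langle 5,6,7\rangle$ (so $a=5$, $b=2$) and $t=1$: by the formula, $\sharp\mathrm D(m,m+i_1)-\sharp\mathrm D(m)$ is $3,5,6,6,5,3$ as $i_1$ runs through $1,\ldots,6$ — far from monotone — and going from $i_1=6$ to $i_1=5$ the count \emph{increases} by $2$, since $\pe[\frac12]-\pe[\frac52]=1-3=-2$. Only the cumulative comparison between $(d_1',1,\ldots,1)$ and $(1,\ldots,1)$ holds; the stepwise monotonicity you invoke is false.

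The paper closes this gap with a global, not local, argument. After the pair-replacements it reduces the claim to $\sum_{j=t+1}^{i_t}\pe[\frac{a+b-j}b]-\sum_{j=1}^{i_1'}\pe[\frac{i_1'-j}b]\ge 0$, then reindexes the first sum (shift $j\mapsto j+t$) and drops the vanishing $j=i_1'$ term from the second, so both sums run over $j=1,\ldots,i_1'-1$; the inequality follows termwise because $a+b-t\ge i_t+1-t=i_1'$ gives $\pe[\frac{a+b-t-j}b]\ge\pe[\frac{i_1'-j}b]$ for every $j$. You should replace your local decrement argument with this termwise comparison. One further caution: your aside that Remark~\ref{varias-expresiones} ``has no amenability hypothesis'' is not accurate. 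The formula is derived from Lemma~\ref{d-L}, which assumes amenability, and it genuinely fails for non-amenable configurations (for $S=\langle 5,6,7\rangle$, $m=19$, the set $\{19,25\}$ is not amenable and $\sharp\mathrm D(19,25)=14$ while the formula predicts $11$). The paper likewise states Lemma~\ref{seguidos-dan-menos} without this hypothesis but only ever applies it to shadows of amenable sets, which are themselves amenable; your sketch would be tighter if it said so explicitly rather than asserting amenability plays no role.
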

\begin{proof}
 Let $d_k=i_k-i_{k-1}$ for $k\in \{1,\ldots,t\}$ ($i_0=0$). We know that (see Remark \ref{varias-expresiones})
  \[\sharp\mathrm D(m,m+i_1,\ldots,m+i_t) = \sharp \mathrm D(m) + \sum_{j=1}^{i_t} \pe[\frac{a+b-j}b] - \sum_{k=1}^t \sum_{j=1}^{d_k} \pe[\frac{d_k-j}b].\]
 By applying several times the above remark, we obtain that $\sharp \mathrm D(m,m+i_1,\ldots,m+i_t)\geq \sharp \mathrm D(m,m+i_1',m+i_2',\ldots,m+i_t')$, with $i_t=i_t'$ and $d_k'=i_k'-i_{k-1}'=1$ for $k\in \{2,\ldots,t\}$. By using again the above expression, but now for $d_k'$ instead of $d_k$, we get
  \[\sharp\mathrm D(m,m+i_1',\ldots,m+i_t') = \sharp \mathrm D(m) + \sum_{j=1}^{i_t} \pe[\frac{a+b-j}b] - \sum_{j=1}^{i_1'} \pe[\frac{i_1'-j}b].\]
  Hence by Corollary \ref{cuantos-mas-seguidos}, in order to prove the inequality of the statement, it suffices to show that
  \[\sum_{j=1}^{i_t}\pe[\frac{a+b-j}b]-\sum_{j=1}^{i_1'} \pe[\frac{i_1'-j}b] \ge \sum_{j=1}^t \pe[\frac{a+b-j}b],\]
  or equivalently,
  \[\sum_{j=t+1}^{i_t} \pe[\frac{a+b-j}b] - \sum_{j=1}^{i_1'}\pe[\frac{i_1'-j}b]\ge 0.\]
  Now, if we take into account that $\sum_{j=t+1}^{i_t} \pe[\frac{a+b-j}b]= \sum_{j=1}^{i_t-t} \pe[\frac{a+b-t-j}b]$, that $\sum_{j=1}^{i_1'}\pe[\frac{i_1'-j}b]=\sum_{j=1}^{i_1'-1}\pe[\frac{i_1'-j}b]$, and that $i_1'+(t-1)=i_t'=i_t$, we get $\sum_{j=t+1}^{i_t} \pe[\frac{a+b-j}b]= \sum_{j=1}^{i_1'-1} \pe[\frac{a+b-t-j}b]$. Since $a+b-t\ge i_t+1-t=i_1'$, we obtain the desired inequality.
\end{proof}


\subsection{Ordered amenable sets}\label{subsec:ordered_amenable}

As we have seen above, the minimum number of divisors of elements in the ground is reached when these elements form an interval starting in $m$. 
In this section we study configurations fulfilling this condition.

Let $M$ be a configuration and let
$$j_0=max\{j\in\{0,\ldots, a-1\}\mid x-(m+b)= qa +j, \mbox{for some } q\in \Z \mbox{ and } x\in M\}.$$ 
Let us call \emph{wagon} of $M$ the set  $\{x\in M\mid x-(m+b)= qa +j_0, \mbox{for some integer } q\}$.

An element $P$ of a configuration $M$ is said to be the \emph{pivot} of $M$ if either
($P<m+b$ and $P$ is the maximum of $M$) or,  $P$ is the maximum of the wagon.

Note that the wagon so as the pivot element of a configuration can be determined in an algorithmic way. 

Checking Figure~\ref{fig:biggest_ordered_amenable} may be useful. The wagon is column 22 and the pivot is 305. 

The wagon consists of the rightmost elements of $M$. The highest of these is the pivot element. Note that the index of the column containing the wagon is $b+j_0$. 

An $(S,m,r)$-amenable set $M$ with pivot element $P$ is said to be \emph{ordered amenable} if its shadow is of the form $\{m,m+1,\ldots,m+t\}$, for some integer $t$, $0\le t<a+b-1$, and the only element that can possibly be added to obtain an $(S,m,r+1)$-amenable set without increasing the shadow is $P+\rho_2$.

\begin{figure}[h]
\begin{tikzpicture}
[trg/.style={diamond,draw=black!80,fill=red!50,thick,
inner sep=0pt,minimum size=4.5000000000mm},
rect/.style={rectangle,draw=black!70,fill=green!40,thick,
inner sep=0pt,minimum size=4.5000000000mm},
wagon/.style={rectangle,draw=black!85,fill=blue!40,thick,
inner sep=0pt,minimum size=4.5000000000mm},
pivot/.style={star,draw=black!80,fill=blue!60,thick,
inner sep=0pt,minimum size=4.5000000000mm},
ground/.style={circle,draw=black!50,fill=blue!30,thick, 
inner sep=0pt,minimum size=4.5000000000mm},
int_triang/.style={diamond,draw=black!80,fill=red!80,thick, 
inner sep=0pt,minimum size=4.5000000000mm}, 
int_rect/.style={rectangle,draw=black!80,fill=black!10!green!90,thick, 
inner sep=0pt,minimum size=4.5000000000mm}, 
int_wagon/.style={rectangle,draw=black!85,fill=blue!80,thick, 
inner sep=0pt,minimum size=4.5000000000mm}, 
gap/.style={circle,draw=black!40,fill=green!20,thick,
 inner sep=0pt,minimum size=4.5000000000mm}]
\node at (2.2000000000,3.3868421052) [gap] {{\color{gray}{\tiny 288}}};
\node at (2.7500000000,3.4210526315) [gap] {{\color{gray}{\tiny 289}}};
\node at (3.3000000000,3.4552631578) [gap] {{\color{gray}{\tiny 290}}};
\node at (3.8500000000,3.4894736842) [gap] {{\color{gray}{\tiny 291}}};
\node at (4.4000000000,3.5236842105) [gap] {{\color{gray}{\tiny 292}}};
\node at (4.9500000000,3.5578947368) [gap] {{\color{gray}{\tiny 293}}};
\node at (5.5000000000,3.5921052631) [gap] {{\color{gray}{\tiny 294}}};
\node at (6.0500000000,3.6263157894) [gap] {{\color{gray}{\tiny 295}}};
\node at (6.6000000000,3.6605263157) [gap] {{\color{gray}{\tiny 296}}};
\node at (7.1500000000,3.6947368421) [gap] {{\color{gray}{\tiny 297}}};
\node at (7.7000000000,3.7289473684) [gap] {{\color{gray}{\tiny 298}}};
\node at (8.2500000000,3.7631578947) [gap] {{\color{gray}{\tiny 299}}};
\node at (8.8000000000,3.7973684210) [gap] {{\color{gray}{\tiny 300}}};
\node at (9.3500000000,3.8315789473) [gap] {{\color{gray}{\tiny 301}}};
\node at (9.9000000000,3.8657894736) [gap] {{\color{gray}{\tiny 302}}};
\node at (10.4500000000,3.9000000000) [gap] {{\color{gray}{\tiny 303}}};
\node at (11,3.9342105263) [rect] {\tiny 304};
\node at (11.5500000000,3.9684210526) [pivot] {\tiny 305};
\node at (12.1000000000,4.0026315789) [gap] {{\color{gray}{\tiny 306}}};
\node at (2.2000000000,2.7368421052) [gap] {{\color{gray}{\tiny 269}}};
\node at (2.7500000000,2.7710526315) [gap] {{\color{gray}{\tiny 270}}};
\node at (3.3000000000,2.8052631578) [gap] {{\color{gray}{\tiny 271}}};
\node at (3.8500000000,2.8394736842) [gap] {{\color{gray}{\tiny 272}}};
\node at (4.4000000000,2.8736842105) [gap] {{\color{gray}{\tiny 273}}};
\node at (4.9500000000,2.9078947368) [gap] {{\color{gray}{\tiny 274}}};
\node at (5.5000000000,2.9421052631) [gap] {{\color{gray}{\tiny 275}}};
\node at (6.0500000000,2.9763157894) [gap] {{\color{gray}{\tiny 276}}};
\node at (6.6000000000,3.0105263157) [gap] {{\color{gray}{\tiny 277}}};
\node at (7.1500000000,3.0447368421) [gap] {{\color{gray}{\tiny 278}}};
\node at (7.7000000000,3.0789473684) [gap] {{\color{gray}{\tiny 279}}};
\node at (8.2500000000,3.1131578947) [gap] {{\color{gray}{\tiny 280}}};
\node at (8.8000000000,3.1473684210) [trg] {\tiny 281};
\node at (9.3500000000,3.1815789473) [trg] {\tiny 282};
\node at (9.9000000000,3.2157894736) [trg] {\tiny 283};
\node at (10.4500000000,3.2500000000) [trg] {\tiny 284};
\node at (11,3.2842105263) [rect] {\tiny 285};
\node at (11.5500000000,3.3184210526) [wagon] {\tiny 286};
\node at (12.1000000000,3.3526315789) [gap] {{\color{gray}{\tiny 287}}};
\node at (2.2000000000,2.0868421052) [gap] {{\color{gray}{\tiny 250}}};
\node at (2.7500000000,2.1210526315) [gap] {{\color{gray}{\tiny 251}}};
\node at (3.3000000000,2.1552631578) [gap] {{\color{gray}{\tiny 252}}};
\node at (3.8500000000,2.1894736842) [gap] {{\color{gray}{\tiny 253}}};
\node at (4.4000000000,2.2236842105) [gap] {{\color{gray}{\tiny 254}}};
\node at (4.9500000000,2.2578947368) [gap] {{\color{gray}{\tiny 255}}};
\node at (5.5000000000,2.2921052631) [gap] {{\color{gray}{\tiny 256}}};
\node at (6.0500000000,2.3263157894) [gap] {{\color{gray}{\tiny 257}}};
\node at (6.6000000000,2.3605263157) [trg] {\tiny 258};
\node at (7.1500000000,2.3947368421) [trg] {\tiny 259};
\node at (7.7000000000,2.4289473684) [trg] {\tiny 260};
\node at (8.2500000000,2.4631578947) [trg] {\tiny 261};
\node at (8.8000000000,2.4973684210) [trg] {\tiny 262};
\node at (9.3500000000,2.5315789473) [trg] {\tiny 263};
\node at (9.9000000000,2.5657894736) [trg] {\tiny 264};
\node at (10.4500000000,2.6000000000) [trg] {\tiny 265};
\node at (11,2.6342105263) [rect] {\tiny 266};
\node at (11.5500000000,2.6684210526) [wagon] {\tiny 267};
\node at (12.1000000000,2.7026315789) [gap] {{\color{gray}{\tiny 268}}};
\node at (2.2000000000,1.4368421052) [gap] {{\color{gray}{\tiny 231}}};
\node at (2.7500000000,1.4710526315) [gap] {{\color{gray}{\tiny 232}}};
\node at (3.3000000000,1.5052631578) [gap] {{\color{gray}{\tiny 233}}};
\node at (3.8500000000,1.5394736842) [gap] {{\color{gray}{\tiny 234}}};
\node at (4.4000000000,1.5736842105) [trg] {\tiny 235};
\node at (4.9500000000,1.6078947368) [trg] {\tiny 236};
\node at (5.5000000000,1.6421052631) [trg] {\tiny 237};
\node at (6.0500000000,1.6763157894) [trg] {\tiny 238};
\node at (6.6000000000,1.7105263157) [trg] {\tiny 239};
\node at (7.1500000000,1.7447368421) [trg] {\tiny 240};
\node at (7.7000000000,1.7789473684) [trg] {\tiny 241};
\node at (8.2500000000,1.8131578947) [trg] {\tiny 242};
\node at (8.8000000000,1.8473684210) [trg] {\tiny 243};
\node at (9.3500000000,1.8815789473) [trg] {\tiny 244};
\node at (9.9000000000,1.9157894736) [trg] {\tiny 245};
\node at (10.4500000000,1.9500000000) [trg] {\tiny 246};
\node at (11,1.9842105263) [rect] {\tiny 247};
\node at (11.5500000000,2.0184210526) [wagon] {\tiny 248};
\node at (12.1000000000,2.0526315789) [gap] {{\color{gray}{\tiny 249}}};
\node at (2.2000000000,0.7868421052) [trg] {\tiny 212};
\node at (2.7500000000,0.8210526315) [trg] {\tiny 213};
\node at (3.3000000000,0.8552631578) [trg] {\tiny 214};
\node at (3.8500000000,0.8894736842) [trg] {\tiny 215};
\node at (4.4000000000,0.9236842105) [trg] {\tiny 216};
\node at (4.9500000000,0.9578947368) [trg] {\tiny 217};
\node at (5.5000000000,0.9921052631) [trg] {\tiny 218};
\node at (6.0500000000,1.0263157894) [trg] {\tiny 219};
\node at (6.6000000000,1.0605263157) [trg] {\tiny 220};
\node at (7.1500000000,1.0947368421) [trg] {\tiny 221};
\node at (7.7000000000,1.1289473684) [trg] {\tiny 222};
\node at (8.2500000000,1.1631578947) [trg] {\tiny 223};
\node at (8.8000000000,1.1973684210) [trg] {\tiny 224};
\node at (9.3500000000,1.2315789473) [trg] {\tiny 225};
\node at (9.9000000000,1.2657894736) [trg] {\tiny 226};
\node at (10.4500000000,1.3000000000) [trg] {\tiny 227};
\node at (11,1.3342105263) [rect] {\tiny 228};
\node at (11.5500000000,1.3684210526) [wagon] {\tiny 229};
\node at (12.1000000000,1.4026315789) [gap] {{\color{gray}{\tiny 230}}};
\node at (0,0) [int_triang] {\tiny 189};
\node at (0,0) [below=5pt] {{\color{blue}{\tiny 1}}};
\node at (0.5500000000,0.0342105263) [int_triang] {\tiny 190};
\node at (0.5500000000,0.0342105263) [below=5pt] {{\color{blue}{\tiny 2}}};
\node at (1.1000000000,0.0684210526) [int_triang] {\tiny 191};
\node at (1.1000000000,0.0684210526) [below=5pt] {{\color{blue}{\tiny 3}}};
\node at (1.6500000000,0.1026315789) [int_triang] {\tiny 192};
\node at (1.6500000000,0.1026315789) [below=5pt] {{\color{blue}{\tiny 4}}};
\node at (2.2000000000,0.1368421052) [int_triang] {\tiny 193};
\node at (2.2000000000,0.1368421052) [below=5pt] {{\color{blue}{\tiny 5}}};
\node at (2.7500000000,0.1710526315) [int_triang] {\tiny 194};
\node at (2.7500000000,0.1710526315) [below=5pt] {{\color{blue}{\tiny 6}}};
\node at (3.3000000000,0.2052631578) [int_triang] {\tiny 195};
\node at (3.3000000000,0.2052631578) [below=5pt] {{\color{blue}{\tiny 7}}};
\node at (3.8500000000,0.2394736842) [int_triang] {\tiny 196};
\node at (3.8500000000,0.2394736842) [below=5pt] {{\color{blue}{\tiny 8}}};
\node at (4.4000000000,0.2736842105) [int_triang] {\tiny 197};
\node at (4.4000000000,0.2736842105) [below=5pt] {{\color{blue}{\tiny 9}}};
\node at (4.9500000000,0.3078947368) [int_triang] {\tiny 198};
\node at (4.9500000000,0.3078947368) [below=5pt] {{\color{blue}{\tiny 10}}};
\node at (5.5000000000,0.3421052631) [int_triang] {\tiny 199};
\node at (5.5000000000,0.3421052631) [below=5pt] {{\color{blue}{\tiny 11}}};
\node at (6.0500000000,0.3763157894) [int_triang] {\tiny 200};
\node at (6.0500000000,0.3763157894) [below=5pt] {{\color{blue}{\tiny 12}}};
\node at (6.6000000000,0.4105263157) [int_triang] {\tiny 201};
\node at (6.6000000000,0.4105263157) [below=5pt] {{\color{blue}{\tiny 13}}};
\node at (7.1500000000,0.4447368421) [int_triang] {\tiny 202};
\node at (7.1500000000,0.4447368421) [below=5pt] {{\color{blue}{\tiny 14}}};
\node at (7.7000000000,0.4789473684) [int_triang] {\tiny 203};
\node at (7.7000000000,0.4789473684) [below=5pt] {{\color{blue}{\tiny 15}}};
\node at (8.2500000000,0.5131578947) [int_triang] {\tiny 204};
\node at (8.2500000000,0.5131578947) [below=5pt] {{\color{blue}{\tiny 16}}};
\node at (8.8000000000,0.5473684210) [int_triang] {\tiny 205};
\node at (8.8000000000,0.5473684210) [below=5pt] {{\color{blue}{\tiny 17}}};
\node at (9.3500000000,0.5815789473) [int_triang] {\tiny 206};
\node at (9.3500000000,0.5815789473) [below=5pt] {{\color{blue}{\tiny 18}}};
\node at (9.9000000000,0.6157894736) [int_triang] {\tiny 207};
\node at (9.9000000000,0.6157894736) [below=5pt] {{\color{blue}{\tiny 19}}};
\node at (10.4500000000,0.6500000000) [int_triang] {\tiny 208};
\node at (10.4500000000,0.6500000000) [below=5pt] {{\color{blue}{\tiny 20}}};
\node at (11,0.6842105263) [int_rect] {\tiny 209};
\node at (11,0.6842105263) [below=5pt] {{\color{blue}{\tiny 21}}};
\node at (11.5500000000,0.7184210526) [int_wagon] {\tiny 210};
\node at (11.5500000000,0.7184210526) [below=5pt] {{\color{blue}{\tiny 22}}};
\node at (12.1000000000,0.7526315789) [ground] {{\color{gray}{\tiny 211}}};
\node at (12.1000000000,0.7526315789) [below=5pt] {{\color{blue}{\tiny 23}}};
\end{tikzpicture}
\caption{The biggest ordered amenable.}\label{fig:biggest_ordered_amenable}
\end{figure}
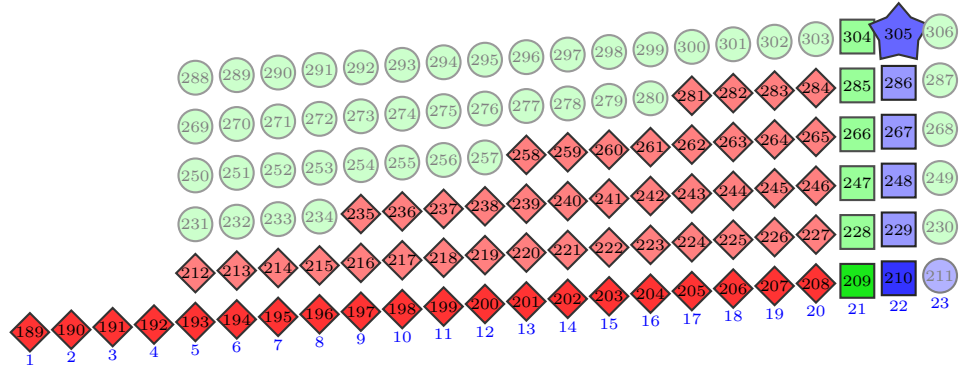

We now show how to construct $(S,m,r)$-amenable sets.

\begin{remark}
 In view of Lemma \ref{div-m+algo}, for every positive integer $q$ with $qb<a$,
\[ \mathrm D(m+qa+qb)=\mathrm D(m)\cup \{m-(ka+r)~|~ a-qb\le r\le a-1, \frac{r-(a+b)}b\le k< \frac{r}b\}.\]
By performing a change of variables (change $a-r$ to $r$ and $-k-1$ to $k$), we obtain that 
\[\mathrm D(m+qa+qb)=\mathrm D(m) \cup \{m+ka+r~|~ 1\le r\le qb, -\frac{a-r}b-1< k\le \frac{r}b\}.\]
Hence 
\[\mathrm D(m+qa+qb)\cap [m,\infty)=\{m\}\cup\{m+ka+r~|~ 1\le r\le qb, 0\le k\le \frac{r}b\}\]
and 
\[\mathrm D(m+qa+qb)\cap [m,m+a+b)=\{m,m+1,\ldots,m+qb\}\]
(observe that for $k$ to be one, $r$ must be at least $b$, and in this case we obtain $m+a+b$ which is not in $[m,m+a+b)$).

Moreover, 
\[\sharp \mathrm D(m+qa+qb)\cap [m,\infty)=1+q+\frac{b}2q(q+1),\]
since the cardinality of the set $\{m+ka+r~|~ 1\le r\le qb, 0\le k\le \frac{r}b\}$ is $\sum_{r=1}^{qb} (\lfloor \frac{r}b\rfloor +1)$, which can be rewritten as $qb+\sum_{i=0}^{q-1} \sum_{j=0}^{b-1}\lfloor \frac{ib+j}b\rfloor + \lfloor\frac{qb}b\rfloor$, and this equals $qb+q+\sum_{i=0}^{q-1} bi= qb+q +b\frac{q(q-1)}2= q+\frac{b}2q(q+1)$. By adding now the cardinality of $\{m\}$, we obtain the desired equality. 
\end{remark}

Clearly the sets $\mathrm D(m+\lambda)\cap [m,\infty)$ are amenable sets. 
The following lemma shows that some of these are indeed  ordered amenable sets. The problem is that their cardinalities do not cover all possible $r$'s.

\begin{lemma}
 Let $q$ be a positive integer such that $qb<a$. Then $\mathrm D(m+qa+qb)\cap [m,\infty)$ is an ordered amenable set.
\end{lemma}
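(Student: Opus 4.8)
The plan is to check, in turn, the ingredients in the definition of an ordered amenable set for $M:=\mathrm D(m+qa+qb)\cap[m,\infty)$. First, $M$ is amenable, because every set of the form $\mathrm D(\lambda)\cap[m,\infty)$ is $m$-closed under division (division is transitive in $S$). Next, by the remark immediately preceding the statement, $M-m=\bigcup_{l=0}^{q}B_{l}$, where $B_{0}=\{0,1,\ldots,qb\}$ and $B_{l}=\{l(a+b),l(a+b)+1,\ldots,la+qb\}$ for $1\le l\le q$: one groups the given description of $M$ by the value $k=l$, noting that for $k=l\ge 1$ the conditions $0\le k\le r/b$ and $r\le qb$ amount to $lb\le r\le qb$. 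In particular the shadow of $M$ is $M\cap[m,m+a+b)=\{m,m+1,\ldots,m+qb\}$, and since $qb<a\le a+b-1$ this is an interval $\{m,\ldots,m+t\}$ with $0\le t=qb<a+b-1$; this is the first required property.

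For the pivot, observe that the $B_{l}$ are intervals of integers, two consecutive ones being separated by a gap of at least $b$ (hence at least one) integers, that $B_{q}=\{qa+qb\}$ is a singleton, and that each of $B_{0},\ldots,B_{q-1}$ has at least $b+1$ elements. A short computation of residues modulo $a$ (again grouping by $l$) shows that, as $x$ ranges over the elements of $M$ with $x\ge m+b$, the residues of $x-(m+b)$ modulo $a$ fill exactly $\{0,1,\ldots,(q-1)b\}$; hence $j_{0}=(q-1)b$, the wagon of $M$ is $\{x\in M\mid x-m\equiv qb \pmod{a}\}=\{m+qb,\,m+a+qb,\,\ldots,\,m+qa+qb\}$, and so the pivot is $P=m+qa+qb$. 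Since $q,b\ge 1$ and $\rho_{2}=a$, the element $P+\rho_{2}=m+(q+1)a+qb$ lies strictly above the ground $[m,m+a+b)$.

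It then remains to verify the last property: that $P+\rho_{2}$ is the only element that could be added to $M$ to obtain an $(S,m,\sharp M+1)$-amenable set without increasing the shadow. I will prove the stronger statement that no new element at all can be so added. Adding an element of the ground that is not already in $M$ increases the shadow, so suppose $y\notin M$, $y\ge m+a+b$, and $M\cup\{y\}$ is amenable. For each $j\in\{0,\ldots,b\}$ we have $a+j\in S$ and $y-(a+j)\ge y-(a+b)\ge m\ge c$, so $y-(a+j)\in S$, and therefore $y-(a+j)\in\mathrm D(y)\cap[m,\infty)\subseteq M\cup\{y\}$; being smaller than $y$, it lies in $M$. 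Thus the $b+1$ consecutive integers $y-a-b,\ldots,y-a$ all belong to $M$, i.e.\ $\{y-a-b,\ldots,y-a\}-m\subseteq\bigcup_{l}B_{l}$. A run of $b+1$ consecutive integers contained in $\bigcup_{l}B_{l}$ cannot meet any gap, so it lies inside a single $B_{l}$; as $B_{q}$ is a singleton, $l\le q-1$. From $(y-a-b)-m\ge\min B_{l}$ and $(y-a)-m\le\max B_{l}=la+qb$ we get $m+(l+1)(a+b)\le y\le m+(l+1)a+qb$, that is $y\in B_{l+1}+m\subseteq M$ (recall $l+1\le q$), contradicting $y\notin M$. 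Hence $M$ is ordered amenable.

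The main obstacle I anticipate is bookkeeping: reading off the precise block decomposition of $M$ from the preceding remark (the gap sizes, and the fact that the top block is a singleton while the others are long enough), and matching the residue computation to the exact convention used to single out the pivot when $M$ reaches below $m+b$. Once the block picture is in place, the combinatorial core---that a block forced to contain the window $y-a-b,\ldots,y-a$ thereby pushes $y$ into the next block---is immediate.
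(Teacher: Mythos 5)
Your proof is correct and follows a genuinely different route from the paper's. Both proofs establish the amenability and the interval-shadow condition from the preceding remark, and both aim to show that no element can be added without enlarging the shadow; the difference is in that last step. The paper parametrizes a candidate $s=m+ua+v$ with $0\le v<a$ and argues that, if $M\cup\{s\}$ were amenable, then a specific ground element ($m+a+b-1$ when $v\le qb$, or $m+v$ when $v>qb$) would be forced into the shadow, yielding a contradiction. You instead unroll the remark into the block decomposition $M-m=\bigcup_{l=0}^{q}B_{l}$, compute $j_0=(q-1)b$ and the pivot $P=m+qa+qb$ explicitly, and then use a clean "run of $b+1$ consecutive divisors" argument: any $y\ge m+a+b$ that could be adjoined would force $y-a-b,\ldots,y-a$ into a single block $B_l$ (with $l\le q-1$), which in turn forces $y$ itself into $B_{l+1}+m\subseteq M$, a contradiction. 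Your route is a bit more work up front (deriving the block structure and gap sizes), but it pays off: the argument is uniform and does not split into residue cases. In fact, it is more robust than the paper's. In the paper's case $v\le qb$, the application of Lemma~\ref{pertenencia-intervalos} to $s-(m+a+b-1)=(u-1)a+(v-b+1)$ implicitly assumes $v-b+1\ge 0$; when $v<b-1$ the cited Lemma does not apply as stated, and indeed $m+a+b-1$ need not divide $s$ there (e.g.\ $s=m+2a$ with $b>1$ gives $s-(m+a+b-1)=a-b+1\notin S$). The conclusion is still true — a different ground element such as $m+a$ does the job — but your consecutive-run argument sidesteps this issue entirely.
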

\begin{proof}
 We already know that its shadow is an interval containing $m$ (the condition $qb<a$, ensures that the shadow is not the whole ground), and as pointed out above, it is amenable. In order to conclude the proof, we show that for all $s>m$, $s\not \in \mathrm D(m+qa+qb)$, if the set $\mathrm D(m+qa+qb)\cup\{s\}$ is amenable, then its shadow is larger than $\{m,m+1,\ldots,m+qb\}$. Write $s=m+ua+v$, with $0\le v<a$. If $u$ is zero, as $s\not\in \mathrm D(m+qa+qb)$, we obtain that $v>qb$, obtaining in this way a new element in the shadow. So $u$ must be positive. We distinguish to cases.
\begin{itemize}
 \item If $v\le qb$, then as $s\not \in \mathrm D(m+qa+qb)$, by the preceding remark, we deduce that $u$ must be greater than $\frac{v}b$. But then $m+ua+v-(m+a+b-1)=(u-1)a+(v-b)+1$, and this element is in $S$ if and only if $v-b+1\le (u-1)b$ (Lemma \ref{pertenencia-intervalos}), or equivalently, $v< ub$, which holds since $u>\frac{v}b$. This proves that $m+a+b-1$ is in the shadow of $\mathrm D(m+qa+qb)\cup \{s\}$ (under the assumption that this set is amenable), and it is not in $\{m,m+1,\ldots,m+qb\}$, a contradiction.

 \item Now assume that $v>qb$. Then the element $m+v$ is in the shadow of $\mathrm D(m+qa+qb)\cup\{s\}$, obtaining again a contradiction.
\end{itemize}

\end{proof}

From an $(S,m,r)$-ordered amenable set, we can construct another $(S,m,r-1)$-ordered amenable set, just by removing its pivot.

\begin{lemma} \label{quitando-pivot}
If $M$ is an ordered amenable set, which shadow is not the whole ground, and $P$ is its pivot, then $M\setminus\{P\}$ is ordered amenable.
\end{lemma}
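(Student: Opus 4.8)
The plan is to verify, for $M':=M\setminus\{P\}$, the three clauses in the definition of ordered amenable: that $M'$ is $(S,m,r-1)$-amenable, that its shadow is an interval $\{m,\dots,m+t'\}$, and that the only element whose addition to $M'$ keeps amenability without enlarging the shadow is $\mathrm{pivot}(M')+\rho_2$ (recall $\rho_2=a$). I would first dispatch the case $P<m+b$: then by the definition of pivot $P=\max M$, so $M=\{m,\dots,m+t\}\subseteq[m,m+b)$ with $t<b$, hence $M'=\{m,\dots,m+t-1\}$ equals its own shadow and is trivially amenable; moreover, by Proposition~\ref{prop:amenable_def_generators} any new element $s\ge m+a+b$ would force the $b+1$ consecutive integers $s-(a+b),\dots,s-a$ into $M'$, impossible since $|M'|=t<b$, while any new ground element enlarges the shadow, so nothing may be added and $M'$ is vacuously ordered amenable (the degenerate case $M=\{m\}$, $M'=\emptyset$, being covered by convention). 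From here on $P\ge m+b$, i.e.\ $P$ is the top of the wagon, sitting in a column of position $j_0$.

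Two structural facts about $M$, both obtained from Proposition~\ref{prop:amenable_def_generators} together with Lemma~\ref{pertenencia-intervalos} (used to recognise divisibility by $a$), will be used. First, $j_0\le a-2$: if $M$ had an element in position $a-1$, repeatedly dividing it by $a$ would drag $m+a+b-1$ into $M$, contradicting that the shadow is not the whole ground. Second, the wagon is the full vertical segment $\{m+b+j_0,\ m+b+j_0+a,\ \dots,\ P\}$, since dividing $P$ by $a$ repeatedly stays inside $M$ and $P$ is the largest element in its column.

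For amenability of $M'$ I would prove that $P$ divides no other element of $M$. If it did, choose $x\in M$ with $\gamma:=x-P\in S\setminus\{0\}$ minimal; writing $\gamma=g+\gamma'$ with $g$ a minimal generator and $\gamma'\in S$, Proposition~\ref{prop:amenable_def_generators} gives $x-g=P+\gamma'\in M$, and minimality of $\gamma$ forces $\gamma'=0$, so $g=\gamma\in\{a,\dots,a+b\}$ and $P+g\in M$. If $g=a$ then $P+a$ is a wagon element strictly above $P$ — impossible; if $g=a+\ell$ with $1\le\ell\le b$, then subtracting the minimal generator $a+\ell-1$ from $P+a+\ell$ gives $P+1\in M$ (Proposition~\ref{prop:amenable_def_generators}), and $P+1$ occupies position $j_0+1\le a-1$, contradicting the maximality of $j_0$. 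Hence $M'$ is amenable. For its shadow: if $P\ge m+a+b$ then $M'$ has the same ground part as $M$, so the shadow is still $\{m,\dots,m+t\}$; if instead $m+b\le P<m+a+b$, then (checking $j_0=t-b$) the top shadow element $m+t$ lies in the wagon column, and as $P$ also lies there with $P\le m+t$ we get $P=m+t$, so the shadow of $M'$ is $\{m,\dots,m+t-1\}$. In both cases it is an interval starting at $m$.

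The last clause — the crux — reduces pleasantly to the ordered amenability of $M$ with no further combinatorics. Let $s\notin M'$, $s>m$, with $M'\cup\{s\}$ amenable and with the same shadow as $M'$; then $s\ge m+a+b$, since a new ground element would enlarge the shadow. If $s\notin M$, then $M\cup\{s\}=(M'\cup\{s\})\cup\{P\}$ is still amenable — the divisors $\ge m$ of $P$ already lie in $M$, those of the remaining elements lie in $M'\cup\{s\}$ — and, $s$ being above the ground, $M\cup\{s\}$ has the same shadow as $M$; ordered amenability of $M$ then forces $s=\mathrm{pivot}(M)+\rho_2=P+\rho_2$, whence $s-\rho_2=P\notin M'\cup\{s\}$ contradicts the amenability of $M'\cup\{s\}$. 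Thus $s\in M$, i.e.\ $s=P$. Finally, if $P\ge m+a+b$ then $M'\cup\{P\}=M$ is a shadow-preserving amenable extension of $M'$, and the full-segment form of the wagon gives $\mathrm{pivot}(M')=P-\rho_2$, so the unique admissible $s$ is indeed $P=\mathrm{pivot}(M')+\rho_2$; if $P$ lies in the ground, re-adding $P$ enlarges the shadow, so in fact nothing may be added and $M'$ is vacuously ordered amenable. The only genuinely delicate point is the congruence bookkeeping in the amenability step — complicated by the ground $[m,m+a+b)$ being $b$ units wider than one period of $S$ — and in particular ruling out the wrap-around possibility $j_0+\ell\ge a$, which is precisely what $j_0\le a-2$ prevents.
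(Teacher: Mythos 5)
Your proof is correct and takes essentially the same approach as the paper: you first establish that $P$ divides no other element of $M$, so that $M\setminus\{P\}$ remains amenable, and then check the interval-shadow and pivot-extension clauses of the definition. The paper's own proof is a two-line assertion of exactly these facts (``Observe that $P\notin\mathrm D(M\setminus\{P\})$\ldots\ From the definition of pivot, it follows easily''); your argument supplies the details it leaves implicit, notably the bound $j_0\le a-2$, the full-segment structure of the wagon, and the clever reduction of the third clause to the ordered amenability of $M$ itself.
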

\begin{proof}
Observe that $P$ does not belong to $\mathrm D(M\setminus\{P\})$, and thus $M\setminus\{P\}$ is still amenable. From the definition of pivot, it follows easily that this set is also ordered amenable.
\end{proof}

Let $r$ be a positive integer, there exists $q\in \mathbb Z$ such that
\[ q+\frac{1}2bq(q-1)\le r< 1+q+\frac{1}2bq(q+1).\]
Define $\mathrm h(r)=q$. Thus we can write $r=\mathrm h(r)+ \frac{1}2 b\mathrm h(r)(\mathrm h(r)-1)+s$, with $0\le s\le \mathrm h(r)b$. Hence 
\begin{equation}\label{r-hr}
r=\mathrm h(r)+ \frac{1}2 b\mathrm h(r)(\mathrm h(r)-1)+k\mathrm h(r)+j,
\end{equation} with $-1\le k\le b-1$ and $0<j\le \mathrm h(r)$ ($k=-1$ only in the case $r=\mathrm h(r)+ \frac{1}2 b\mathrm h(r)(\mathrm h(r)-1)$, and then $j=\mathrm h(r)$).
Note that $\mathrm h(r)=0$ leads to $r=0$, so we may assume that $\mathrm h(r)>0$. Observe also that $j+k=0$ only when $\mathrm h(r)=1=j$ and $k=-1$.

\begin{proposition} \label{existencia-r-ordered}
Let $r$ be a positive integer. Let $k$ and $j$ be as above. If $b(\mathrm h(r)-1)+k+1<a+b-1$, then the set 
\begin{multline*} (\mathrm D(m+(\mathrm h(r)-1)(a+b))\cap [m,\infty))\\ \cup \{m+ua+v~|~ (\mathrm h(r)-1)b+1\le v\le (\mathrm h(r)-1)b+k, 0\le u\le \mathrm h(r)-1\}\\  \cup  \{m+((\mathrm h(r)-1)b+k+1)a+v~|~ 0\le v<j\}
\end{multline*}
is an $r$-ordered amenable set.
\end{proposition}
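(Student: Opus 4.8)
The statement asserts that an explicit set, which I will call $N$ --- the union of the "staircase" $O:=\mathrm D(m+(\mathrm h(r)-1)(a+b))\cap[m,\infty)$, a block $A$ of complete columns, and a block $B$ making up the bottom part of one further column --- is an $(S,m,r)$-ordered amenable set. Unwinding the definition, I must establish: (i) $N$ is $(S,m,r)$-amenable, i.e.\ $m\in N$, $\sharp N=r$, and $N$ is $m$-closed under division; (ii) the shadow $N\cap[m,m+a+b)$ is an initial segment $\{m,\dots,m+t\}$ with $t<a+b-1$; (iii) the only $s>m$ with $s\notin N$ such that $N\cup\{s\}$ is amenable without enlarging the shadow is $s=P+\rho_2$, where $P$ is the pivot of $N$ and $\rho_2=a$.

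\textbf{Normal form and reduction.} The plan is to work in "column coordinates": write every element of $S$ above $m$ as $m+ua+v$ with $0\le v<a$, so $v$ is its column and $u$ its level. From the description of $\mathrm D(m+q(a+b))\cap[m,\infty)$ obtained above I record that $O=\{m\}\cup\{m+ua+v\mid 1\le v\le(\mathrm h(r)-1)b,\ 0\le u\le v/b\}$, that $O$ is amenable with shadow $\{m,\dots,m+(\mathrm h(r)-1)b\}$ and $\sharp O=\mathrm h(r)+\tfrac b2\mathrm h(r)(\mathrm h(r)-1)$, and that $(\mathrm h(r)-1)b<a$ (this follows from the hypothesis $b(\mathrm h(r)-1)+k+1<a+b-1$). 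Since $\lfloor v/b\rfloor=\mathrm h(r)-1$ for every column $v$ with $(\mathrm h(r)-1)b<v\le\mathrm h(r)b$, the picture is that $N$ is the lower-left part of $\mathrm D(m+\mathrm h(r)(a+b))\cap[m,\infty)$: the staircase $O$, then $k$ further full columns (this is $A$, each contributing $\mathrm h(r)$ elements), then the lowest $j$ levels of the next column (this is $B$). In the boundary case $k=-1$ one has $A=\varnothing$, $j=\mathrm h(r)$, $B$ already inside $O$, hence $N=O$, and the statement is the earlier lemma that $\mathrm D(m+q(a+b))\cap[m,\infty)$ is ordered amenable; so from now on I assume $0\le k\le b-1$.

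\textbf{Cardinality, amenability, shadow.} For (i): the three blocks occupy pairwise disjoint sets of columns, so $\sharp N=\sharp O+\sharp A+\sharp B=\sharp O+k\,\mathrm h(r)+j=r$ by $(\ref{r-hr})$. For $m$-closure under division I use Proposition~\ref{prop:amenable_def_generators}: as $O$ is already amenable it suffices to check, for each $x=m+ua+v$ in $A\cup B$ and each minimal generator $g\in\{a,\dots,a+b\}$ with $x-g\ge m$, that $x-g\in N$; subtracting $g$ either keeps the column and lowers the level by one, or passes to a strictly smaller column, so the new pair $(u',v')$ still satisfies $0\le u'\le\lfloor v'/b\rfloor$ and therefore lies in $O\cup A\cup B$, with membership in $S$ confirmed en route by Lemma~\ref{pertenencia-intervalos}. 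For (ii): the level-$0$ elements of $O$, $A$ and $B$ contribute to the shadow exactly $m,\dots,m+(\mathrm h(r)-1)b$, then $m+(\mathrm h(r)-1)b+1,\dots,m+(\mathrm h(r)-1)b+k$, then $m+(\mathrm h(r)-1)b+k+1$ (the last one because $j\ge1$), and no element of $N$ has a larger column; hence the shadow is $\{m,\dots,m+t\}$ with $t=(\mathrm h(r)-1)b+k+1$, and $t<a+b-1$ is precisely the hypothesis.

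\textbf{The pivot condition, and the main obstacle.} Step (iii) is the delicate one. I would identify $P$ as the top-level element of the rightmost occupied column (column $(\mathrm h(r)-1)b+k+1$) and then mimic the case analysis of the earlier ordered-amenability lemma: given $s=m+ua+w$ with $0\le w<a$, $s\notin N$, $s>m$ and $N\cup\{s\}$ amenable, split according to whether $u=0$ or $u>0$ and according to the position of $w$ relative to the occupied columns, and show by Lemma~\ref{pertenencia-intervalos} that unless $s=P+a$ the closure of $N\cup\{s\}$ under division by the generators forces a ground element of column $>t$, hence enlarges the shadow. The expected main obstacle is exactly this bookkeeping: tracking which $(\text{level},\text{column})$ pairs lie in $O\cup A\cup B$ throughout the several boundary regimes ($\mathrm h(r)=1$; $j=\mathrm h(r)$; $k=b-1$, where the rightmost column is $\mathrm h(r)b$ and carries one extra level) and pinning down that precisely the single candidate $s=P+a$ escapes the argument; everything else is routine manipulation of the inequalities defining $O$, $A$, $B$ together with the membership criterion of Lemma~\ref{pertenencia-intervalos}. (Alternatively, one could prove the proposition by induction on $r$, obtaining $N$ from the analogous set of cardinality $r+1$ by removing its pivot via Lemma~\ref{quitando-pivot}; this trades the case analysis for a verification that the explicit formulas are compatible with that operation.)
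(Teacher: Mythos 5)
Your decomposition of the target set into the staircase $O$, the block $A$ of complete columns, and the partial column $B$ is sound, and you are right to read the third block as a partial column: the formula as printed in the statement appears to interchange the $a$-multiple and the remainder, and your interpretation is the one consistent with the final corollary and with the picture of peeling the big staircase down to the given set. Your steps (i) and (ii) (cardinality via (\ref{r-hr}), $m$-closure under division via Proposition~\ref{prop:amenable_def_generators}, and the shape and size of the shadow) are plausible and could be made rigorous, though even there your phrase ``the new pair $(u',v')$ still satisfies $0\le u'\le\lfloor v'/b\rfloor$ and therefore lies in $O\cup A\cup B$'' quietly overstates what that inequality gives you, since $N$ omits the upper part of the last occupied column and everything to its right.

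The genuine gap is step (iii), and you say so yourself: after identifying the pivot $P$ you announce that you would ``mimic the case analysis of the earlier ordered-amenability lemma'' and label the remaining bookkeeping the ``expected main obstacle,'' but you never resolve it. That case analysis is the substance of being ordered amenable; as written you have argued only that $N$ is amenable with interval shadow, not that the sole insertable element (without growing the shadow) is $P+\rho_2$. The parenthetical alternative you offer at the end is exactly the paper's proof: start from $\mathrm D(m+\mathrm h(r)(a+b))\cap[m,\infty)$, already certified ordered amenable by the preceding lemma, and remove pivots $1+\mathrm h(r)+\tfrac{b}{2}\mathrm h(r)(\mathrm h(r)+1)-r$ times via Lemma~\ref{quitando-pivot}. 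That route preserves ordered amenability at every step with no case analysis; what remains is the routine check that the removals produce exactly the displayed set. If instead you insist on the direct verification, you must actually carry out the uniqueness argument for $P+\rho_2$, including the boundary regimes you enumerate; until then the proof is incomplete.
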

\begin{proof}
 This set is obtained from $\mathrm D(m+\mathrm h(r)(a+b))$ by repeating the Lemma \ref{quitando-pivot}  $1+\mathrm h(r)+\frac{b}2\mathrm h(r)(\mathrm h(r)+1)-r$ times.
\end{proof}

Next we prove that ordered $(S,m,r)$-amenable sets have minimal shadow in the set of all $(S,m,r)$-amenable sets with shadow an interval containing $m$. As a consequence any two $(S,m,r)$-ordered amenable sets have the same shadow.

\begin{proposition}\label{prop:ordered_shadow}
Let $M$ be an ordered $(S,m,r)$-amenable subset of $S$ whose shadow has $t$ elements and let $N$ be another $(S,m,r)$-amenable subset of $S$ whose shadow is an interval containing $m$. Then, the shadow of  $N$ has at least also $t$ elements. 
\end{proposition}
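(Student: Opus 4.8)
The plan is to argue by contradiction. Since $M$ is ordered amenable, its shadow is an interval $\{m,m+1,\dots,m+t-1\}$, and by hypothesis the shadow of $N$ is $\{m,m+1,\dots,m+s-1\}$ for some positive integer $s$; suppose $s<t$. I will produce an upper bound for $\sharp N$ in terms of $s$ and a lower bound for $\sharp M$ in terms of $t$, and derive a contradiction from $\sharp M=\sharp N=r$.

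First I would isolate, for each interval $L=\{m,\dots,m+\ell-1\}$ with $1\le\ell\le a+b$, the \emph{largest} amenable set with shadow $L$. Since the union of sets that are $m$-closed under division is again $m$-closed under division, and a union does not alter the intersection with the ground, the union $\widehat M_L$ of all $(S,m)$-amenable sets with shadow exactly $L$ is itself amenable with shadow $L$; hence every amenable set with shadow $L$ is contained in $\widehat M_L$. Put $\sigma(\ell):=\sharp\widehat M_L$. Next I would check that $\sigma$ is strictly increasing: for $\ell<a+b$ the set $\widehat M_{\{m,\dots,m+\ell-1\}}\cup\{m+\ell\}$ is amenable — using the generator form (\ref{eq:def_amenable_generators}) of amenability, for the new point $m+\ell$ and a minimal generator $g$ with $m+\ell-g\ge m$ one has $m\le m+\ell-g\le m+\ell-a\le m+\ell-1$, so $m+\ell-g$ lies in $L$ — and its shadow is $\{m,\dots,m+\ell\}$; therefore $\sigma(\ell+1)\ge\sigma(\ell)+1$. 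Since $N\subseteq\widehat M_{\{m,\dots,m+s-1\}}$ and $s\le t-1$, this already gives
\[
r=\sharp N\le\sigma(s)\le\sigma(t-1).
\]

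It remains to establish the complementary inequality $\sharp M\ge\sigma(t-1)+1$, which together with the line above yields $r\le\sigma(t-1)<\sigma(t-1)+1\le r$, the desired contradiction. Here I would use the chain structure of ordered amenable sets. By Lemma~\ref{quitando-pivot}, deleting the pivot repeatedly produces from $M$ a decreasing chain of ordered amenable sets; along this chain the shadow is an interval whose length never increases, and since removing a single element changes the shadow size by at most one, there is a (last) term $A$ of the chain still having shadow $\{m,\dots,m+t-1\}$ such that $A'=A\setminus\{m+t-1\}$ has shadow $\{m,\dots,m+t-2\}$. The crucial claim is that $A'=\widehat M_{t-1}$, so that $A=\widehat M_{t-1}\cup\{m+t-1\}$ has exactly $\sigma(t-1)+1$ elements; since $M$ is recovered from $A$ by adjoining back some (possibly zero) pivots, $\sharp M\ge\sharp A=\sigma(t-1)+1$. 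To prove $A'=\widehat M_{t-1}$ I would combine the defining property of ``ordered'' — that the only element which can be adjoined to $A'$ without enlarging its shadow is its pivot plus $\rho_2$ — with the explicit description of ordered amenable sets in Proposition~\ref{existencia-r-ordered} and with the computation, ultimately resting on the membership criterion of Lemma~\ref{pertenencia-intervalos}, of how high each column of $S$ can climb above a prescribed interval shadow; concretely, one checks that $\widehat M_{t-1}$ sits at the top of the chain of ordered amenable sets with shadow $\{m,\dots,m+t-2\}$ and that $A'$ must coincide with it.

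I expect the main obstacle to be exactly this identification $A'=\widehat M_{t-1}$ — equivalently, showing that an ordered amenable set which is smallest for its own interval shadow already contains every element that the preceding (shorter) interval shadow permits above the ground. This is a matter of carefully tracking the column heights prescribed by Lemma~\ref{pertenencia-intervalos} and matching them with the canonical order in which the construction of Proposition~\ref{existencia-r-ordered} fills the columns; by contrast, the reduction to the maximal sets $\widehat M_L$ and the monotonicity of $\sigma$ are routine.
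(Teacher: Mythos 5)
Your argument is a genuinely different route from the paper's, but it is not complete, and the gap you flag is precisely the hard part. The paper disposes of the statement with a short direct contradiction: assuming $\sharp L_N<t$, since $\sharp N=\sharp M$ the set $N\setminus M$ is nonempty, with minimum $z$; then $z\ge m+a+b$ (any $z$ in the ground would lie in $L_N\subseteq L_M\subseteq M$), and by minimality all divisors of $z$ that are $\ge m$ and $<z$ lie in $M$, so $M\cup\{z\}$ is $(S,m,r+1)$-amenable with unchanged shadow; the ordered hypothesis forces $z=P+\rho_2$, which lives in the wagon column of $M$, while one checks (using that $m+t-1\notin N$ together with amenability of $N$) that no element of $N\setminus M$ can sit in that column. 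That is the whole proof.

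Your plan replaces this with a counting argument through the maximal amenable set $\widehat M_L$ with a fixed shadow $L$ and its cardinality $\sigma(\ell)$. Two points are not established. First, you assert that the union of all amenable sets with shadow $L$ is again such a set; for $\sigma(\ell)$ to be a number you also need this union to be \emph{finite}, i.e.\ that above a proper interval shadow every column has bounded height. This is true (essentially because the column of the missing ground element is empty and the other columns feed into it), but it is not obvious and you do not argue it. Second, and more seriously, the identification $A'=\widehat M_{t-1}$ is exactly where the work lies, and I do not see that it is even correct in the form you state it: $A'$ is obtained by stripping pivots from \emph{your particular} $M$, while $\widehat M_{t-1}$ is an object independent of $M$, and nothing in Lemma~\ref{quitando-pivot} guarantees the pivot-removal chain from an arbitrary ordered $M$ lands on the maximal amenable set of the next shorter shadow rather than on some smaller ordered set with that shadow. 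You would first have to show something like ``for each cardinality $r$ there is a unique ordered $(S,m,r)$-amenable set'' or ``ordered amenable sets with a common shadow are totally ordered by inclusion,'' neither of which is available at this point in the paper (indeed Corollary~\ref{cor:ordered_shadow}, the uniqueness of the shadow, is itself deduced \emph{from} the present proposition). So as it stands the argument is circular in spirit, or at best has a substantial missing lemma; by contrast the paper's proof needs nothing beyond the definitions of amenable, ordered, wagon and pivot.
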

\begin{proof}
 Suppose that the shadow of $N$ has less than $t$ elements. This implies that the set $N\setminus M$ is non empty (since both sets have cardinality $r$) and therefore it has a minimum $z$. Furthermore, $N$ has no elements in the wagon of $M$.
 It is straightforward to observe that $M\cup\{z\}$ is amenable.\\ 
  In fact, as $N$ is amenable, we have that $\mathrm D(z)\cap[m,\infty)\subset N$; $(\mathrm D(z)\setminus \{z\})\cap[m,\infty)\subset M$ because $z$ is minimum. So $\mathrm D(z)\cap[m,\infty)\subset M\cup\{z\}$.
 As $z$ is not in the column containing the wagon of $M$ we conclude that $z\ne P+\rho_2$, which contradicts the assumption that $M$ is ordered.
\end{proof}

Considering $M$ and $N$ ordered amenable sets in the above proposition and applying it in both directions, we get the following consequence.

\begin{corollary}\label{cor:ordered_shadow}
 The shadows of ordered $(S,m,r)$-amenable sets coincide.
\end{corollary}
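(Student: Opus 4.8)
The statement to prove is Corollary~\ref{cor:ordered_shadow}: the shadows of ordered $(S,m,r)$-amenable sets coincide.

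The plan is to derive this directly from Proposition~\ref{prop:ordered_shadow} by a short symmetry argument, so the corollary is essentially a one-line consequence and no new machinery is needed. First I would observe that every ordered $(S,m,r)$-amenable set is in particular an $(S,m,r)$-amenable set whose shadow is, by definition, an interval $\{m,m+1,\ldots,m+t\}$ containing $m$; hence it qualifies both as the set ``$M$'' and as the set ``$N$'' in the hypothesis of Proposition~\ref{prop:ordered_shadow}. So let $M$ and $N$ be two ordered $(S,m,r)$-amenable sets, with shadows having $t$ and $t'$ elements respectively.

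Next I would apply Proposition~\ref{prop:ordered_shadow} once with the roles $(M,N)$: since $M$ is ordered and $N$ is amenable with shadow an interval containing $m$, the shadow of $N$ has at least $t$ elements, i.e.\ $t'\ge t$. Then I would apply the same proposition with the roles reversed, $(N,M)$: since $N$ is ordered and $M$ is amenable with shadow an interval containing $m$, the shadow of $M$ has at least $t'$ elements, i.e.\ $t\ge t'$. Combining, $t=t'$. Finally, since the shadow of an ordered amenable set is by definition the interval $\{m,m+1,\ldots,m+t\}$, two ordered $(S,m,r)$-amenable sets with shadows of the same cardinality have literally the same shadow, namely $[m,m+t]\cap\N$. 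This completes the argument.

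There is really no obstacle here: the only thing to be mildly careful about is checking that the definition of ``ordered amenable'' does indeed guarantee the shadow is an interval starting at $m$ (it does, by the phrasing ``its shadow is of the form $\{m,m+1,\ldots,m+t\}$''), so that the hypotheses of Proposition~\ref{prop:ordered_shadow} are genuinely met in both directions. Once that is noted, the corollary follows immediately, and indeed the paragraph preceding the corollary already flags that it is obtained by ``applying it in both directions.''
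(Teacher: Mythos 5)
Your argument is correct and is exactly the one the paper intends: apply Proposition~\ref{prop:ordered_shadow} in both directions to get equal cardinalities, then note that two intervals of the form $\{m,\ldots,m+t\}$ with the same cardinality are equal. This matches the paper's stated reasoning (the sentence immediately preceding the corollary).
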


With all these ingredients we can effectively compute the cardinality of the shadow of an ordered $(S,m,r)$-amenable set.

\begin{corollary}\label{suelo-r-ordered}
 Let $M$ be an ordered $(S,m,r)$-amenable set, and let $k$ and $j$ be as in (\ref{r-hr}), then $\# (M\cap [m,m+a+b))= (\mathrm h(r)-1)b+k+2$.
\end{corollary}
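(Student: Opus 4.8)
The plan is to reduce the statement to a count on one explicit set. By Corollary~\ref{cor:ordered_shadow} all ordered $(S,m,r)$-amenable sets have the same shadow, so it suffices to compute $\#(M_0\cap[m,m+a+b))$ for the explicit $r$-ordered amenable set $M_0$ constructed in Proposition~\ref{existencia-r-ordered} (its hypothesis $b(\mathrm{h}(r)-1)+k+1<a+b-1$ is exactly the condition guaranteeing that an ordered $(S,m,r)$-amenable set exists at all, so it is in force here). Write $h=\mathrm{h}(r)$; then $M_0$ is a union of three explicitly given pieces, and I would intersect each of them with the ground $[m,m+a+b)$ and add up.

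For the first piece, writing $m+(h-1)(a+b)=m+qa+j$ with $0\le j<a$ and applying Lemma~\ref{div-m+algo} (this is the computation of $\mathrm D(m+q(a+b))\cap[m,\infty)$ done earlier in the subsection, in the range where it is valid), one gets $\mathrm D(m+(h-1)(a+b))\cap[m,m+a+b)=\{m,m+1,\dots,m+(h-1)b\}$; the hypothesis forces $(h-1)b\le a+b-2$, so this is a proper initial segment of the ground and contributes $(h-1)b+1$ points. For the second piece $\{m+ua+v\mid (h-1)b+1\le v\le (h-1)b+k,\ 0\le u\le h-1\}$, only the row $u=0$ can lie inside the window $[m,m+a+b)$ — for $u\ge1$, which forces $h\ge2$, the element is at least $m+a+(h-1)b+1\ge m+a+b$ — so this piece meets the ground in the consecutive block $\{m+(h-1)b+1,\dots,m+(h-1)b+k\}$, i.e. in $\max(k,0)$ further points. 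For the third piece, only its lowest entry lies below $m+a+b$ (again by the hypothesis), the remaining entries sitting at or above $m+a+b$; so it contributes the single point $m+(h-1)b+k+1$.

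Gluing the three ground-parts, a short case-check — including the degenerate cases $k=-1$ (which forces $j=h$, makes the second piece empty, and makes the third piece's point coincide with the top of the first piece) and $k=0$ (empty second piece) — shows that they always fit together into the initial interval $\{m,m+1,\dots,m+((h-1)b+k+1)\}$, of cardinality $(h-1)b+k+2$. Since this interval is, by Corollary~\ref{cor:ordered_shadow}, the shadow of the given ordered $(S,m,r)$-amenable set $M$, the formula follows. The main obstacle is precisely this boundary bookkeeping — deciding which rows $u$ of the second and third pieces remain inside $[m,m+a+b)$ and treating the small cases $k\in\{-1,0\}$ and $h=1$ — but once Corollary~\ref{cor:ordered_shadow} is in hand nothing deeper is needed.
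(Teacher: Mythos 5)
Your argument is exactly the paper's: invoke Corollary~\ref{cor:ordered_shadow} to reduce to the explicit set constructed in Proposition~\ref{existencia-r-ordered}, then read off its shadow. You merely carry out in detail the bookkeeping that the paper dismisses with a one-line ``observe that the ground is $\{m,\ldots,m+(\mathrm{h}(r)-1)b+k+1\}$,'' correctly handling the degenerate cases $k\in\{-1,0\}$ and $h=1$ along the way.
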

\begin{proof}
 As any two ordered amenable sets with the same cardinality have the same elements in the ground, we can use the ordered ameneable set of the preceding proposition. Observe that the ground for this set is $\{m,m+1,\ldots,m+(\mathrm h(r)-1)b,m+(\mathrm h(r)-1)b+1,\ldots,m+(\mathrm h(r)-1)+k+1\}$.
\end{proof}

Observe that this result gives a bound for integers $r$ such that there exists an ordered $(S,m,r)$-amenable set.

Now we prove that if $M$ is an $(S,m,r)$-amenable set whose shadow is not an interval containing $m$, then we can remove the trailing spaces in the shadow without increasing the number of divisors, that is we can find $N$, an $(S,m,r)$-amenable with shadow  an interval containing $m$ and such that $\sharp \mathrm D(N)\le \sharp \mathrm D(M)$. By using what we already know for ordered $(S,m,r)$-amenable set, as a consequence we will obtain that they are optimal configurations. To this end we need several tools.

The first one enables us to push an $(S,m,r)$-amenable set to the right, obtaining an $(S,m+1,r)$-amenable set.
\begin{lemma}\label{lemma:push_right}
Let $M$ be an $(S,m,r)$-amenable set. Then $N=M+1=\{x+1\mid x\in M\}$ is an $(S,m+1,r)$-amenable set.
\end{lemma}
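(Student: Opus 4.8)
The plan is simply to verify, one by one, the conditions that make $N=M+1$ an $(S,m+1,r)$-amenable set, using the reformulation recalled after Definition~\ref{def:amenable}: $N$ must be a subset of $S\cap[m+1,\infty)$ of cardinality $r$, it must contain $m+1$, and it must be $(m+1)$-closed under division. Since the translation $x\mapsto x+1$ is a bijection, $\sharp N=\sharp M=r$, and from $m\ge 2c-1$ we get $m+1\ge 2c-1$, so the statement makes sense. First I would check that $N\subseteq S\cap[m+1,\infty)$: every $x\in M$ satisfies $x\ge m\ge 2c-1$, hence $x+1\ge m+1\ge 2c\ge c$, and since every integer not smaller than $c$ lies in $S$, we conclude $x+1\in S$ with $x+1\ge m+1$. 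Because $m\in M$ and $m$ is the minimum of $M$, we also have $m+1\in N$ (and it is the minimum of $N$).

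The heart of the matter is showing that $N$ is $(m+1)$-closed under division. Let $y\in N$, write $y=x+1$ with $x\in M$, and take $p\in\mathrm D(y)$ with $p\ge m+1$; I must show $p\in N$, i.e. $p-1\in M$. First, $p-1\ge m\ge 2c-1\ge c$, so $p-1\in S$ and $p-1\ge m$. Next, using the identity $x-(p-1)=(x+1)-p=y-p$ together with $p\in\mathrm D(y)$ (so that $y-p\in S$), we get $x-(p-1)\in S$; hence $p-1\in\mathrm D(x)$. Therefore $p-1\in\mathrm D(x)\cap[m,\infty)$, and since $M$ is $(S,m,r)$-amenable this set is contained in $M$. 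Thus $p-1\in M$ and $p=(p-1)+1\in N$, as required. Combining this with the previous paragraph, $N$ is an $(S,m+1,r)$-amenable set.

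I do not expect any real obstacle here: the only points to be careful about are the elementary inequalities ($2c-1\ge c$, and $p-1\ge m$ coming from $p\ge m+1$), which place $p-1$ inside $S$ and make it eligible to be tested for membership in $M$, and the trivial identity $x-(p-1)=y-p$, which turns ``divisor of $y$'' into ``divisor of $x$''. Note that the argument uses only $m\ge c$ and the amenability of $M$; it does not actually use that $S$ is generated by an interval.
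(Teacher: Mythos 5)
Your proof is correct and follows essentially the same route as the paper's: translate a divisor $p\ge m+1$ of $y=x+1$ back by one unit, observe that $p-1$ is then a divisor of $x$ lying in $[m,\infty)$, invoke amenability of $M$ to place $p-1$ in $M$, and shift forward again. The paper phrases the same step in terms of the complementary divisor $h=y-p$, and omits the routine verifications (that $N\subseteq S$, that $m+1=\min N$), but the substance is identical; your write-up is simply more explicit about why $p-1\in S$ and about the bookkeeping inequalities.
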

\begin{proof}
Let $x=y+1\in N$, with $y\in M$, and suppose that $h\in S$ is such that the divisor $x-h$ of $x$ is greater than $m$. We have to prove that $x-h\in N$. As $y-h$ is greater than or equal to $m$, we have that $y-h\in M$. It follows that $x-h=(y+1)-h=(y-h)+1\in M+1=N$.
\end{proof}
If we shift an $(S,m,r)$-amenable set to the left, we get an $(S,m-1,r)$-amenable set (provided $m-1\ge 2c-1$).

\begin{lemma}\label{lemma:push_left}
Let $M$ be an $(S,m,r)$-amenable set with $m\ge 2c$. Then $M-1=\{x-1\mid x\in M\}$ is an $(S,m-1,r)$-amenable set.
\end{lemma}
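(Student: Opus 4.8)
The plan is to copy the structure of the proof of Lemma~\ref{lemma:push_right}, the only new ingredient being a check that translating to the left keeps us inside $S$ --- which is precisely where the hypothesis $m\ge 2c$ (strictly stronger than $m\ge 2c-1$) is needed. First I would verify that $M-1\subseteq S$: every $x\in M$ satisfies $x\ge m\ge 2c$, hence $x-1\ge 2c-1\ge c$, and since every integer that is at least $c$ belongs to $S$, we get $x-1\in S$. In particular $m-1\ge 2c-1$ and $m-1=\min(M-1)$, so $M-1$ is a legitimate candidate for an $(S,m-1,r)$-amenable set, and it has cardinality $r$ because $x\mapsto x-1$ is injective.

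Next I would check that $M-1$ is $(m-1)$-closed under division. Take $x'=x-1\in M-1$ with $x\in M$, and suppose $h\in S$ divides $x'$ with $x'-h\ge m-1$; I must show $x'-h\in M-1$, that is, $(x'-h)+1=x-h\in M$. Now $x-h=(x'-h)+1\ge m\ge 2c\ge c$, so $x-h\in S$, and together with $h\in S$ this gives $h\in\mathrm D(x)$. Since also $x-h\ge m$, amenability of $M$ yields $x-h\in\mathrm D(x)\cap[m,\infty)\subseteq M$, whence $x'-h=(x-h)-1\in M-1$, as desired. This completes the verification that $M-1$ is $(S,m-1,r)$-amenable.

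I do not expect any real obstacle: once $M-1\subseteq S$ is secured, the closure argument is the exact left-translate of the computation in Lemma~\ref{lemma:push_right}. The only subtle point is the membership $M-1\subseteq S$, which genuinely uses $m\ge 2c$ and can fail when $m=2c-1$ with $c$ small; it is worth flagging this as the reason the hypothesis was strengthened relative to Lemma~\ref{lemma:push_right}.
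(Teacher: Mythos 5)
Your proof is correct and takes essentially the same approach as the paper: the paper invokes Corollary~\ref{cor:big_divisors} to identify $\mathrm D(y)\cap[m-1,\infty)$ with $(y-S)\cap[m-1,\infty)$ and then shifts, whereas you inline the content of that corollary by directly checking that $x-h\ge m\ge c$ forces $x-h\in S$, after which amenability of $M$ applies. Your side remark about the reason for the strengthened hypothesis $m\ge 2c$ is also the right one — it is exactly what makes $m-1\ge 2c-1$, so that $(S,m-1,r)$-amenability is even well-posed and $M-1\subseteq S$.
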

\begin{proof}
Let $y\in M-1$, say $y=x-1$, with $x\in M$. Note that $y\ge m$. Now we use Corollary~\ref{cor:big_divisors}. By hypothesis $\mathrm D(x)\cap [m,\infty)= (x-S) \cap [m,\infty)\subseteq M$, but then 
$\mathrm D(y)\cap [m,\infty)= ((x-1)-S) \cap [m,\infty)\subseteq (M-1)$.
\end{proof}

If we add the element $m$ to an $(S,m+1,r)$-amenable set, we get an $(S,m,r+1)$-amenable set.

\begin{lemma}\label{lemma:add_element}
Let $N$ be an $(S,m+1,r)$-amenable set. The set $M=\{m\}\cup N$ is $(S,m,r+1)$-amenable.
\end{lemma}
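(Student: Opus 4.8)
The plan is to verify directly that $M=\{m\}\cup N$ meets the requirements of Definition~\ref{def:amenable} for an $(S,m,r+1)$-amenable set: that $M\subseteq S$, that $\sharp M=r+1$ with $\min M=m$, and that $M$ is $m$-closed under division. The first items are immediate. Since $N$ is $(S,m+1,r)$-amenable, each of its elements is at least $m+1>m$; hence $m\notin N$, so $\sharp M=\sharp N+1=r+1$ and $\min M=m$. As usual we have $m\ge 2c-1\ge c$ (this is presupposed by the conclusion, and is consistent with the hypothesis $m+1\ge 2c-1$ on $N$), so $m\in S$ and therefore $M\subseteq S$.

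The real content is the closure condition: for every $x\in M$ we must show $\mathrm D(x)\cap[m,\infty)\subseteq M$. I would argue by cases according to whether $x=m$ or $x\in N$. If $x=m$, recall that $\mathrm D(m)\subseteq[0,m]$; hence $\mathrm D(m)\cap[m,\infty)$ consists of the single element $m$, which lies in $M$, so there is nothing more to check in this case.

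If instead $x\in N$, then the fact that $N$ is $(m+1)$-closed under division gives $\mathrm D(x)\cap[m+1,\infty)\subseteq N$. Decomposing $\mathrm D(x)\cap[m,\infty)=\bigl(\mathrm D(x)\cap[m+1,\infty)\bigr)\cup\bigl(\mathrm D(x)\cap\{m\}\bigr)$, the first piece lies in $N\subseteq M$ and the second satisfies $\mathrm D(x)\cap\{m\}\subseteq\{m\}\subseteq M$. Thus $\mathrm D(x)\cap[m,\infty)\subseteq N\cup\{m\}=M$, completing the verification.

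There is essentially no obstacle here: the proof is a short case analysis, entirely parallel to (and slightly simpler than) that of Lemma~\ref{lemma:push_right}, the point being that lowering the threshold from $m+1$ to $m$ can only bring in the single new point $m$, which belongs to $M$ by construction. The only thing worth stating explicitly is that $m\in S$, so that $M$ is indeed a subset of $S$.
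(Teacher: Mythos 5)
Your proof is correct and follows essentially the same approach as the paper: it verifies that $M$ is $m$-closed under division by splitting into the case $x=m$ (trivial, since $\mathrm D(m)\cap[m,\infty)=\{m\}$) and the case $x\in N$ (where $(m+1)$-closure of $N$ handles everything except possibly the new point $m$, which lies in $M$ by construction). The only difference is cosmetic — you make explicit the decomposition $\mathrm D(x)\cap[m,\infty)=\bigl(\mathrm D(x)\cap[m+1,\infty)\bigr)\cup\bigl(\mathrm D(x)\cap\{m\}\bigr)$ and flag that $m\ge 2c-1$ is an implicit hypothesis, both of which the paper leaves tacit.
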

\begin{proof}
If $N=\{m+1=m_2<\ldots <m_{r+1}\}$ then $M=\{m<m_2<\ldots <m_{r+1}\}$. Write $m_1=m$. 

We have to check that $M$ is $m$-closed under division. It clearly holds for $i=1$. Let $i\ge 2$. The divisors of $m_i$ non smaller than $m+1$ belong to $N$ and thus to $M$. Therefore, divisors of $m_i$ non smaller than $m$ belong to $M$.
\end{proof}
It is immediate that if we remove the biggest element of an $(m,r)$-amenable set, then we get an $(m,r-1)$-amenable set (provided $r> 1$). 
\begin{lemma}\label{lemma:remove_maximum}
Let $M$ be an $(m,r)$-amenable set and suppose that $r>1$. Let $u$ be the maximum of $M$. Then $M\setminus\{u\}$ is an $(m,r-1)$-amenable set.
\end{lemma}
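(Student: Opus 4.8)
The plan is to prove this by directly unwinding the definition of $(S,m,r)$-amenable set; no auxiliary result beyond the elementary fact $\mathrm D(x)\subseteq[0,x]$ is required. First I would write $M=\{m=m_1<\cdots<m_r\}$, so that the maximum is $u=m_r$ and $M\setminus\{u\}=\{m_1<\cdots<m_{r-1}\}$. Since $r>1$, this set still contains $m=m_1$ and has cardinality $r-1$, so by the characterisation just after Definition~\ref{def:amenable} it will suffice to check that $M\setminus\{u\}$ is $m$-closed under division, i.e. that condition (\ref{eq:def_amenable}) holds for it with parameters $(S,m,r-1)$.

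Next I would fix an index $i\in\{1,\ldots,r-1\}$ and take any $p\in\mathrm D(m_i)\cap[m,\infty)$. Because $M$ is $(S,m,r)$-amenable, $p\in M$. On the other hand $p\in\mathrm D(m_i)\subseteq[0,m_i]$, and $m_i<m_r=u$, so $p\le m_i<u$; in particular $p\ne u$. Hence $p\in M\setminus\{u\}$. As $i$ and $p$ were arbitrary, this gives $\mathrm D(m_i)\cap[m,\infty)\subseteq M\setminus\{u\}$ for every $i\le r-1$, which is precisely what is needed.

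There is no genuine obstacle here: the only two points that must be handled with a little care are that removing the \emph{maximum} of $M$ does not remove $m$ (guaranteed by $r>1$), and that the divisors of any retained element $m_i$, being bounded above by $m_i<u$, can never equal the discarded element $u$, so no required divisor is lost. Both are immediate, and the statement follows.
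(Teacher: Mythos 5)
Your argument is correct and is exactly the direct unwinding of the definition that the paper had in mind; the paper omits the proof entirely, simply stating that the lemma is immediate. The two points you single out (that $r>1$ keeps $m$ in the set, and that divisors of a retained $m_i$ are bounded by $m_i<u$ and so never equal $u$) are precisely what makes it immediate.
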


Before removing the trailing spaces of an $(S,m,r)$-amenable set, we need it to no contain the last element in the ground, that is $m+a+b-1$. If this is the case, next we give a procedure to obtain another $(S,m,r)$-amenable set whose shadow is at most as large as the original set, but not containing $m+a+b-1$.

\begin{proposition}\label{prop:remove_upper_element_in_ground}
 Given an $(S,m,r)$-amenable set $M$ with shadow $L_M$ not coinciding with the ground, we can construct an $(S,m,r)$-amenable set $N$ with shadow $L_N$ not containing $m+a+b-1$ and such that $\sharp L_N \le \sharp L_M$.
\end{proposition}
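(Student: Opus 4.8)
The plan is to argue by induction on $\sharp L_M$, or equivalently on the cardinality $r$ of $M$; if $m+a+b-1\notin L_M$ there is nothing to do, so assume $m+a+b-1\in M$. The idea is to use the fact that, since $L_M$ does not coincide with the whole ground, there is at least one integer in $[m,m+a+b)$ not belonging to $M$; pick such a ``hole'' and use it to trade the top element $m+a+b-1$ of the ground for something that does not enlarge the shadow. Concretely, I would first reduce to $m\ge 2c$ (the hypothesis $m\ge 2c-1$ is available; if $m=2c-1$ one can pass to a larger $m$ by Corollary~\ref{cor2:delta-suelo}-type shifting, or simply note the shift lemmas below still apply after adjusting the base point). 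The key combinatorial device is to \emph{push the whole set one step to the right and re-attach $m$}: by Lemma~\ref{lemma:push_right}, $M+1$ is an $(S,m+1,r)$-amenable set, and by Lemma~\ref{lemma:add_element}, $\{m\}\cup(M+1)$ is an $(S,m,r+1)$-amenable set. Then, by Lemma~\ref{lemma:remove_maximum}, removing the maximum of $\{m\}\cup(M+1)$ gives back an $(S,m,r)$-amenable set.

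The point of this shift-and-trim operation is its effect on the shadow. The shadow of $M$ is $L_M=M\cap[m,m+a+b)$; the shadow of $\{m\}\cup(M+1)$ is $\{m\}\cup\big((L_M+1)\cap[m,m+a+b)\big)$. If $m+a+b-1\in L_M$, then after shifting, the element $m+a+b$ leaves the ground (it is the first element of the next column, \emph{not} in $[m,m+a+b)$), so the new shadow loses the old top element. What it might gain is the element $m$ — but $m$ was already in the shadow to begin with. Hence $\sharp(\text{new shadow})\le \sharp L_M$, and in fact it drops by one if no other element is pushed out. Now remove the maximum of $\{m\}\cup(M+1)$ (which exists and lies strictly above $m$ since $r\ge 1$): this restores cardinality $r$ and can only shrink the shadow further, or leave it unchanged if the maximum was above the ground. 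Iterating this construction a finite number of times (each iteration strictly decreases $\sharp(M\cap[m,m+a+b))$ unless $m+a+b-1$ is already absent, and the shadow cardinality is bounded below by $0$) we arrive at an $(S,m,r)$-amenable set $N$ with $m+a+b-1\notin L_N$ and $\sharp L_N\le \sharp L_M$; this is the desired set.

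The step I expect to be the main obstacle is controlling the shadow bookkeeping precisely: after a right-shift, an interior element $m+i$ with $i<a+b-1$ simply becomes $m+i+1$ and stays in the ground, so the only element that can leave the ground through the top is $m+a+b-1$, and the only element that can re-enter at the bottom is $m$ itself (re-attached by Lemma~\ref{lemma:add_element}); one must check carefully that no spurious gain in shadow size occurs, and that removing the maximum at the end never \emph{adds} an element to the ground. A secondary subtlety is the termination argument: one should phrase it as an induction on the (finite, nonnegative) quantity $\sharp L_M$, observing that as long as $m+a+b-1\in L_M$ and $L_M\ne[m,m+a+b)\cap\N$ the construction strictly decreases $\sharp L_M$ while preserving amenability, cardinality $r$, and the inequality for the shadow; once $m+a+b-1\notin L_M$ we stop. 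Care is also needed if, during the iteration, the shadow becomes the whole ground again — but this cannot happen, since each step does not increase $\sharp L_M$ and we started with $\sharp L_M<a+b$.
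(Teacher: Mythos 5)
Your shift-and-trim operation --- push $M$ right by one, re-attach $m$, remove the maximum --- is exactly the paper's construction, and you correctly invoke Lemmas~\ref{lemma:push_right}, \ref{lemma:add_element} and \ref{lemma:remove_maximum}; the weak inequality $\sharp L_N\le \sharp L_M$ also holds. But your termination argument has a genuine gap. You claim that as long as $m+a+b-1\in L_M$ each iteration \emph{strictly decreases} $\sharp L_M$, and you propose to induct on this quantity. In fact $\sharp L_M$ is exactly preserved: after the shift, $m+a+b-1$ is carried to $m+a+b$ and leaves the ground (one element lost), $m$ is re-attached (one element gained), and the deleted maximum is $\max(M)+1\ge (m+a+b-1)+1=m+a+b$ --- necessarily above the ground, since we are in the case $m+a+b-1\in M$ --- so removing it does not touch the shadow. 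Thus $\sharp L_N=\sharp L_M$ and an induction on the shadow size never makes progress. Your intermediate assertion that the new shadow ``drops by one if no other element is pushed out'' is where the bookkeeping goes wrong: the shadow of $\{m\}\cup(M+1)$ already has $\sharp L_M$ elements, not $\sharp L_M-1$.

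The quantity that actually decreases is the distance from a fixed \emph{hole} of the shadow to the top of the ground. Since $L_M$ is a proper subset of the ground there is some $x\in[m,m+a+b)$ with $x\notin M$, and necessarily $x<m+a+b-1$. After one shift-and-trim, $x+1\notin N$ (it is not in $M+1$, is not $m$, and removal can't re-create it), and $x+1$ still lies in the ground. Iterating, after at most $m+a+b-1-x$ steps the hole occupies $m+a+b-1$ and the construction stops. This is precisely the paper's argument; you gesture at it in your opening paragraph (``pick such a hole and use it to trade the top element''), but then abandon it in favour of the shadow-size induction, which does not terminate. Replace that induction with the hole-drift count and the rest of your argument is sound and matches the paper.
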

\begin{proof}
 Assume that $M$ is an $(S,m,r)$-amenable set containing $m+a+b-1$, and with a shadow different to the ground. Then, there is at least an element $x$ in the ground, such that $x$ is not in $M$, and thus $x<m+a+b-1$. Let $N=\{m\}\cup(M+1)\setminus\{\max\{\{m\}\cup(M+1)\}\}$. Thus $N$ is a shifting to the right, and then its maximum is replaced by $m$. So $N$ is by the preceding lemmas an $(S,m,r)$-amenable set. Observe also that $x+1\not\in N$. Hence we repeat this procedure until $x+k$ becomes $m+a+b-1$.
\end{proof}

Suppose we have a configuration not containing $m+a+b-1$. We can shrink it so that the shadow of the configuration obtained is an interval containing $m$. It can be done using the following results. 

\begin{lemma}\label{partition}
 Let $M$ be an $(S,m,r)$-amenable set not containing $m+a+b-1$. Assume there exists a column $c$, such that $c\cap M=\emptyset$, and if $M_1$ are the elements in $M$ in the columns to the left of $c$, and $M_2=M\setminus M_1$, then $M_2\neq \emptyset$ ($c$ is a splitting column for $M$). Let $r_1=\sharp M_1$, $r_2=\sharp M_2$ and $m_2=\min M_2$. Then
\begin{enumerate}
 \item $M_1$ is an $(S,m,r_1)$ amenable set,
 \item $M_2$ is an $(S,m_2,r_2)$ amenable set,
 \item $M_1\cup(M_2-1)$ is an $(S,m,r)$ amenable set.
\end{enumerate}  
\end{lemma}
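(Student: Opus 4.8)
The plan is to prove the three assertions in order, reusing the basic closure lemmas that precede this statement. Throughout, recall that the $i$-th column is $\{x\in S\mid x\ge m,\ x\equiv i\pmod a\}$ and that ``to the left of $c$'' refers to the column index. I will write $M=\{m=m_1<\cdots<m_r\}$, and $c$ the empty splitting column.

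\textbf{Part (1).} First I would show $M_1$ is $(S,m,r_1)$-amenable. Clearly $m\in M_1$, since $m$ is the first element of the first column and $M_2\neq\emptyset$ forces $c$ to be strictly to the right of column $1$. For $m$-closure, let $y\in M_1$ and let $y-h\in S$ with $y-h\ge m$ and $h\in S\setminus\{0\}$. Since $M$ is amenable, $y-h\in M$; I must check $y-h\in M_1$, i.e. that $y-h$ lies in a column to the left of $c$ (or in $c$, but $c\cap M=\emptyset$, so strictly to the left). The key observation is that $h\ge a$ would push $y-h$ down by at least one full level in the helix picture but \emph{not} change the column in a monotone way; the clean argument is different. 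Instead note: if $y-h$ were in a column $\ge$ that of $c$, then since $y-h<y$ and $y$ is to the left of $c$, the reduction $h$ must cross the cut. But actually the correct and simple fact is that $y-h\le y$ and $y-h$ is still $\ge m$, and the column of $y-h$ can be to the right of $y$'s column only if $y-h$ is on a strictly lower helix level, which forces $y-h<y$; this does not immediately bound the column. The right way: use that $c$ is empty and $M_2=M\setminus M_1$ consists exactly of the elements of $M$ in columns $\ge c$. If $y-h$ were in $M_2$, then $y-h\ge m_2>$ (elements of $M_1$) in the helix order is false in general — so I should instead argue via Corollary~\ref{cor:big_divisors}: $\mathrm D(y)\cap[m,\infty)=(y-S)\cap[m,\infty)$, and every element of $y-S$ that is $\ge m$ and $<$ the level of column $c$ stays to the left; the elements of $y-S$ in columns $\ge c$ are $\ge y-(\text{something}<a)$... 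The cleanest formulation, which I will use, is: the divisors of $y$ in $[m,\infty)$ all have column index $\le$ column index of $y$ when they lie on the same or a higher level, and lie strictly below otherwise; since $c\cap M=\emptyset$ and $c$ separates, any divisor in a column $\ge c$ would itself be in $M_2$, but then it is $\le y <$ min of its own column's $M$-elements only if... This is the genuine obstacle (see below); modulo it, $M_1$ is amenable.

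\textbf{Part (2).} For $M_2$, observe $m_2=\min M_2\ge 2c-1$ since $m_2\ge m\ge 2c-1$. That $m_2\in M_2$ is immediate. For $m_2$-closure: take $y\in M_2$ and $y-h\ge m_2$ with $h\in S\setminus\{0\}$; since $M$ is amenable and $y-h\ge m_2\ge m$, we get $y-h\in M$, and since $y-h\ge m_2$ it cannot be in $M_1$ (the elements of $M_1$ are all $<m_2$ because $m_2=\min M_2$ and $M_2$ is the ``upper'' part — here I will need the defining property that $M_1$ is below $M_2$ in the usual order, which follows from $c$ being a splitting column if splitting is defined so that all of $M_1$ precedes all of $M_2$; if ``left of $c$'' is only about columns, I must first establish $\max M_1<m_2$, which again hinges on the obstacle point). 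Hence $y-h\in M_2$.

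\textbf{Part (3).} For $M_1\cup(M_2-1)$: by Lemma~\ref{lemma:push_left} applied to $M_2$ (valid since $m_2\ge 2c$, which holds because $m_2>m\ge 2c-1$ once we know $m\notin M_2$), $M_2-1$ is $(S,m_2-1,r_2)$-amenable. Since $c$ is empty, shifting $M_2$ left by $1$ moves it into columns that are still $\ge$ the column just left of $c$ and crucially does not collide with $M_1$ (the gap provided by the empty column $c$ absorbs the shift). Now $m_2-1\ge m$; I would check $m$-closure of $M_1\cup(M_2-1)$ by treating the two pieces separately — divisors (in $[m,\infty)$) of elements of $M_1$ land in $M_1$ by Part (1), and divisors $\ge m$ of elements of $M_2-1$: by Corollary~\ref{cor:big_divisors}, $\mathrm D(x-1)\cap[m,\infty)=((x-1)-S)\cap[m,\infty)$; those that are $\ge m_2-1$ land in $M_2-1$ as in Part (2), while those in $[m,m_2-1)$ must be checked to lie in $M_1$ — and this holds because $((x-1)-S)\cap[m,m_2-1)=(x-S)\cap[m+1,m_2)-1\subseteq(M_2\cup M_1)$ shifted appropriately; the elements landing below $m_2$ were divisors of $x$ lying in $M_1$ (they are $<m_2$, hence in $M_1$ by Part (2)'s argument), and subtracting $1$ keeps them in columns left of $c$ provided none of them sat in the first column at value exactly $m$ — but the only such element is $m$ itself, and $m-1<m$ is excluded from the divisor set. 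Cardinality is clearly $r_1+r_2=r$.

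\textbf{Main obstacle.} The crux in all three parts is the same structural fact: \emph{an empty column $c$ that has a nonempty $M$ both strictly to its left and (in columns) to its right actually splits $M$ into a lower block $M_1$ and an upper block $M_2$ with $\max M_1<\min M_2$, and moreover no divisor $\ge m$ of an element of the upper block can ``jump over'' the empty column into a left column at a lower level without producing an element of $M$ in column $c$.} I expect this to require a short lemma about how $\mathrm D(x)\cap[m,\infty)$ distributes across columns for $x\ge c$, using $\mathrm D(x)\cap[m,\infty)=(x-S)\cap[m,\infty)$ (Corollary~\ref{cor:big_divisors}) together with the interval-membership criterion (Lemma~\ref{pertenencia-intervalos}): subtracting an element $ka+r$ of $S$ (so $r\le kb$) from $x$ decreases the column index by $r$ and the level by $k$; since $r\le kb<ka\le$ one level's worth, the divisor stays ``reachable'' without skipping column $c$ unless it lands exactly in $c$. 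Making this precise — that the only way to cross the cut is to land in it — is the real work; everything else is the bookkeeping of the push-left/add-element lemmas already proved.
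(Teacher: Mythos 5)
Your write-up correctly identifies that the heart of the matter is a column-separation property, but you explicitly leave it unproven (you call it ``the real work''), so the proposal as it stands has a genuine gap. Moreover, the specific structural claim you say you need --- that $\max M_1<\min M_2$ --- is \emph{false} in general, so that route would not close the gap even if completed. A small counterexample: take $S=\langle 5,6\rangle$ (so $a=5$, $b=1$, $c=20$), $m=39$, and $M=\{39,40,43,45\}$. One checks directly that $M$ is $(S,39,4)$-amenable and $m+a+b-1=44\notin M$. The column of $41$ is empty, so it is a splitting column; the elements of $M$ to its left are $M_1=\{39,40,45\}$ (note $45$ sits above $40$), and $M_2=\{43\}$. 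Here $\max M_1=45>43=\min M_2$. Nevertheless all three conclusions of the lemma hold for this $M$, which shows the lemma does not rest on, and cannot be proved via, the ordering $\max M_1<\min M_2$.

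What the paper proves instead is weaker and exactly tailored to the conclusion: \emph{no element of $M_1$ divides an element of $M_2$, and vice versa}. From this, all three parts follow by the closure bookkeeping you already sketched (if $y\in M_1$ and $y-h\in M\cap[m,\infty)$, then $y-h$ divides $y$, so it cannot lie in $M_2$; symmetrically for $M_2$ with threshold $m_2$; and disjointness of $M_1$ and $M_2-1$ is immediate because the shift changes the column by one, so a collision would land in the empty column $c$). The key mechanism for the no-cross-divisibility claim is a \emph{column sweep} of divisors that your ``jump over the empty column'' intuition gestures at but does not make precise: if $x\in M_1$ divides $y\in M_2$, write $y-x=ka+r$ with $0\le r\le\min\{a-1,kb\}$ (Lemma~\ref{pertenencia-intervalos}); then $y-(ka+i)$ for $i=0,\ldots,r$ are all divisors of $y$ lying in $[m,\infty)$, hence all in $M$ by amenability, and their columns form a contiguous arc of length $r+1\le a$ from the column of $y$ down to the column of $x$. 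This arc necessarily passes through column $c$, contradicting $c\cap M=\emptyset$. For the reverse direction ($y\in M_2$ dividing $x\in M_1$) the same chain of divisors $y, y+1,\ldots,y+r$ of $x$ is used, and the contradiction comes from the sweep crossing the last column, which is empty because $m+a+b-1\notin M$ together with amenability forces the whole column above it to avoid $M$.

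In short: your outline names the right tools (Corollary~\ref{cor:big_divisors}, Lemma~\ref{pertenencia-intervalos}) and the right kind of obstruction, but replaces the needed claim (no cross-divisibility) by a stronger, false one ($\max M_1<\min M_2$), and does not supply the decisive step --- the explicit chain of divisors whose columns sweep contiguously through $c$.
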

\begin{proof}
 It suffices to show that no element in $M_1$ divides an element in $M_2$, and vice-versa. Assume that there is $x\in M_1$ and $y\in M_2$ such that $y-x\in S$, that is, $y-x=ka+r$ for some $r,k$ nonnegative integers with $r\le \min\{a-1,kb\}$ (Lemma \ref{pertenencia-intervalos}). Hence $y=x+ka+r$, and $y-(ka+i)\in \mathrm D(M)\cap [m,\infty)=M$ for all $i\in \{0,\ldots,r\}$. Assume that $c$ corresponds with the elements $s$ in $[m,\infty)\cap \mathbb N$ such that $s-(m+b)\bmod a=j$. Then, by hypothesis 
 \begin{multline*}
 y-r-(m+b)\bmod a = y-(ka+r)-(m+b)\bmod a\\
 = x-(m+b)\bmod a<y-(m+b)\bmod a,  
 \end{multline*}
 and thus there is $i\in \{0,\ldots,r\}$ such that $y-(ka+i)-(m+b)\bmod a=y-i-(m+b)\bmod a=j$, contradicting that $c$ was an empty column of $M$.

 Now assume that $x\in M_1$ and $y\in M_2$ are such that $x-y=ka+r$ for some $r,k$ as above. In this setting, $x-(ka+i)\in M$ for all $i\in \{0,\ldots,r\}$. By hypothesis $x-(m+b)\bmod a< y-(m+b)\bmod a$. And \begin{multline*}
 y+r-(m+b)\bmod a =y+ka+r-(m+b)\bmod a\\=x-(m+b)\bmod a< y-(m+b)\bmod a.                                                                                                                                                                                                              \end{multline*}
 It follows that for some $i\in\{0,\ldots,r\}$, $y+i-(m+b)\bmod a=a-1$, but this is impossible, since as $m+a+b-1\not\in M$, the column $\{s\in M ~|~ s-(m+b)\bmod a=a-1\}$ is empty. 
\end{proof}

\begin{proposition}\label{prop:shadow_interval}
Let $M$ be an $(S,m,r)$-amenable set whose shadow $L_M$ has $t$ elements. There exists an $(S,m,r)$-amenable set $T$ whose shadow $L_T$ is an interval containing $m$ and has no more than $t$ elements, i.e., $\sharp L_T \le \sharp L_M$. 
\end{proposition}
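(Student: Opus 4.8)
The plan is to reduce $M$, in finitely many steps, to a configuration of the same cardinality $r$ whose shadow is an interval containing $m$ and is no longer than $L_M$, using the shifting lemmas and, as the main tool, the splitting Lemma~\ref{partition}. First I would dispose of the trivial case: if $L_M$ is the whole ground $[m,m+a+b)\cap\N$, then it is already an interval containing $m$ and $T=M$ works. Otherwise I invoke Proposition~\ref{prop:remove_upper_element_in_ground} to replace $M$ by an $(S,m,r)$-amenable set --- still denoted $M$ --- with $\sharp L_M\le t$ and $m+a+b-1\notin M$.

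Now the key loop. I would first record the elementary but crucial observation that amenability makes every column of $M$ \emph{downward closed by multiples of $a$}: if $x\in M$ and $m\le y<x$ with $y\equiv x\pmod a$, then $x-y=kn_1\in S$, so $y\in\mathrm D(x)\cap[m,\infty)\subseteq M$; hence a non-empty column already meets the ground. Using this, one checks that whenever $M$ admits a splitting column $c$, the passage $M\rightsquigarrow M_1\cup(M_2-1)$ of Lemma~\ref{partition} keeps the ground elements of $M_2$ inside the ground and produces no \emph{new} element of $[m,m+a+b)$: the only way to get $m+a+b-1$ back would be to have $m+a+b\in M_2$, which forces $\{m,\dots,m+b\}\subseteq M$ and therefore places the columns of $m+a+b$ and of $m+a+b-1$ to the left of $c$, i.e.\ in $M_1$. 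Thus $M_1\cup(M_2-1)$ is again $(S,m,r)$-amenable, still avoids $m+a+b-1$, and its shadow splits as a disjoint union of pieces of unchanged total cardinality, so $\sharp L_M$ does not grow. Since this shift strictly decreases $\sum_{x\in M}x$ (as $M_2\neq\emptyset$) while that sum is bounded below by $rm$, iterating terminates at a configuration admitting no splitting column.

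It remains to treat that terminal configuration, which is where I expect the real difficulty. ``No splitting column'' means every column meets $M$, and by the column observation this forces $L_M\supseteq\{m,m+1,\dots,m+a-1\}$, so $t\ge a$; if $t=a$ the shadow is exactly that interval and we are done. If $t>a$ it suffices to exhibit \emph{any} $(S,m,r)$-amenable set with interval shadow of size $\le t$: I would start from the interval $J=\{m,m+1,\dots,m+t-1\}\cap\N$ (which is $(S,m,t)$-amenable with shadow $J$) and enlarge it by adjoining above-ground elements of $S$ one at a time until the cardinality reaches $r=t+\sharp(M\setminus L_M)\ge t$, each such element being admissible precisely when its divisors lying in $[m,\infty)$ already belong to the current set --- information supplied by Lemma~\ref{div-m+algo} and the remarks following it, and adding elements outside $[m,m+a+b)$ leaves the shadow equal to $J$. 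The point that needs care, and the hardest part of the whole argument, is that an initial interval of length $t\ge a$ really can be grown to an $(S,m,r)$-amenable set of \emph{every} cardinality $\ge t$ without the shadow spilling past $m+t-1$; this is exactly what the ordered amenable sets of Subsection~\ref{subsec:ordered_amenable} achieve (Proposition~\ref{existencia-r-ordered}, Lemma~\ref{quitando-pivot}, Corollary~\ref{suelo-r-ordered}), so the residual case can be settled either by comparing $t$ with the length of an ordered $(S,m,r)$-amenable set, or directly from the divisor formulas of Subsection~\ref{subsec:counting_lemmas}. Everything else is a routine application of the shifting and splitting lemmas.
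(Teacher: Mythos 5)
Your overall strategy coincides with the paper's: first pass to a set avoiding $m+a+b-1$ via Proposition~\ref{prop:remove_upper_element_in_ground}, then iterate Lemma~\ref{partition}, replacing $M$ by $M_1\cup(M_2-1)$ until no splitting column remains; and your termination argument (the element sum strictly decreases) and the downward-closure observation for columns are both sound and worth recording, since the paper's proof is silent on both. However, the terminal-case analysis contains a genuine gap. You read ``no splitting column'' as ``every column meets $M$,'' but that is not what the definition says: a splitting column is a column $c$ that is empty \emph{and} has $M_2\neq\emptyset$, i.e.\ has elements of $M$ strictly to its right. So ``no splitting column'' only says that every empty column has nothing of $M$ to its right --- equivalently, the non-empty columns form a contiguous block starting from the leftmost column, the one containing $m$. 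Combined with your own downward-closure observation, this already forces the shadow to be exactly $\{m,m+1,\dots,m+t'-1\}$ for some $t'\le t$, which is the desired interval; the case $t'<a$ (some columns entirely empty at the right end) is perfectly possible and poses no problem. Your stronger reading leads you to conclude $t\ge a$ and then to launch an unnecessary second construction (``grow an interval $J$ of length $t$ by above-ground elements up to cardinality $r$'') that you never actually carry out and which, as you flag yourself, would need the machinery of ordered amenable sets --- machinery whose shadow comparisons (Proposition~\ref{prop:ordered_shadow}) only apply to amenable sets whose shadow is \emph{already} an interval, so you cannot invoke them against the original $M$ without circularity. In short: the reduction loop is right, the termination argument is right, but the closing step should simply observe that the absence of a splitting column makes the non-empty columns an initial segment (hence an interval shadow containing $m$, of size at most $t$ because the loop never increases the shadow); the $t\ge a$ claim and the subsequent ``growing'' detour should be dropped.
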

\begin{proof}
 Every time you find a splitting column as in the statement of Lemma \ref{partition}, change $M$ with $M_1\cup(M_2-1)$. This procedure does not increase the number of elements in the shadow of $M$.
\end{proof}

\begin{lemma}\label{amenable-a-amenable-intervalo}
Let $M$ be an $(S,m,r)$-amenable set whose shadow $L_M$ has $t$ elements. There exists an $(S,m,r)$-amenable set $T$ whose shadow is an interval containing $m$, and $\sharp \mathrm D(T) \le \sharp \mathrm D(M)$. 
\end{lemma}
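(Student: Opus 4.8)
The plan is to bootstrap from Proposition~\ref{prop:shadow_interval}. That proposition already produces an $(S,m,r)$-amenable set whose shadow is an interval containing $m$, but it only controls the \emph{cardinality} of the shadow; the counting machinery of Subsection~\ref{subsec:counting_lemmas} will let me convert that cardinality bound into the required bound on the number of divisors. So I would start by applying Proposition~\ref{prop:shadow_interval} to $M$, obtaining an $(S,m,r)$-amenable set $T$ whose shadow $L_T$ is an interval containing $m$; setting $t:=\sharp L_M$ and $t':=\sharp L_T$ we have $t'\le t$, and since $m=\min T$ the shadow is forced to be $L_T=\{m,m+1,\ldots,m+t'-1\}$.

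The second step is to evaluate both divisor counts using Lemma~\ref{suelo-general}, which gives $\sharp\mathrm D(M)=(r-t)+\sharp\mathrm D(L_M)$ and $\sharp\mathrm D(T)=(r-t')+\sharp\mathrm D(L_T)$, hence
\[\sharp\mathrm D(M)-\sharp\mathrm D(T)=(t'-t)+\sharp\mathrm D(L_M)-\sharp\mathrm D(L_T).\]
Writing $L_M=\{m,m+i_1,\ldots,m+i_{t-1}\}$ with $0<i_1<\cdots<i_{t-1}<a+b$ (legitimate because $L_M$ is contained in the ground), Lemma~\ref{seguidos-dan-menos} yields $\sharp\mathrm D(L_M)\ge\sharp\mathrm D(m,m+1,\ldots,m+t-1)$, whereas $\sharp\mathrm D(L_T)=\sharp\mathrm D(m,m+1,\ldots,m+t'-1)$ holds by construction. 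Evaluating these two interval divisor counts with Corollary~\ref{cuantos-mas-seguidos} (reading an empty sum as $0$ when the interval reduces to $\{m\}$) gives
\[\sharp\mathrm D(L_M)-\sharp\mathrm D(L_T)\ \ge\ \sum_{j=t'}^{t-1}\left\lceil\frac{a+b-j}{b}\right\rceil.\]

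To finish, observe that $L_M\subseteq[m,m+a+b)$ forces $t\le a+b$, so each index $j$ with $t'\le j\le t-1$ satisfies $a+b-j\ge 1$ and therefore $\lceil(a+b-j)/b\rceil\ge 1$; since the sum has $t-t'$ terms, it is at least $t-t'$, and combining with the displayed identity,
\[\sharp\mathrm D(M)-\sharp\mathrm D(T)\ \ge\ (t'-t)+(t-t')\ =\ 0,\]
which is exactly $\sharp\mathrm D(T)\le\sharp\mathrm D(M)$, the required inequality.

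I do not expect a genuine obstacle: the real content---shrinking the shadow to an interval without enlarging it (Proposition~\ref{prop:shadow_interval}) and the fact that, for a fixed number of ground elements, the interval starting at $m$ minimizes the number of divisors (Lemma~\ref{seguidos-dan-menos})---has already been established. The only care needed is bookkeeping: reconciling the index conventions of Lemma~\ref{seguidos-dan-menos} and Corollary~\ref{cuantos-mas-seguidos} (a shadow with $s$ elements corresponds to the parameter $s-1$ there) and dismissing the trivial cases $t'=t$ and $L_M$ already being the whole ground.
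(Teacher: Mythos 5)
Your proof is correct and follows essentially the same route as the paper's: apply Proposition~\ref{prop:shadow_interval} to produce $T$ with interval shadow of size $t'\le t$, use Lemma~\ref{suelo-general} to reduce the comparison to shadows, invoke Lemma~\ref{seguidos-dan-menos} to replace $L_M$ by the interval of the same size, and then use Corollary~\ref{cuantos-mas-seguidos} together with the observation that each ceiling term is at least $1$ to absorb the size difference $t-t'$. The only cosmetic difference is bookkeeping (you write $t'$, the paper writes $t-k$, and you arrange the computation as a difference rather than a chain of inequalities); the ideas and the lemmas cited are identical.
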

\begin{proof}
Let $T$ be as in Proposition \ref{prop:shadow_interval}, and assume that $\sharp L_T=t-k$. In view of Lemma \ref{seguidos-dan-menos}, $\sharp \mathrm D(L_M)\ge \sharp \mathrm D(m,m+1,\ldots,m+t-1)$. By Corollary \ref{cuantos-mas-seguidos}, $\sharp\mathrm D(m,m+1,\ldots,m+t-1)=\sharp \mathrm D(m)+\sum_{j=1}^{t-1} \left\lceil \frac{a+b-j}b\right\rceil$, and as $\left\lceil \frac{a+b-j}b\right\rceil\ge 1$, this amount is greater than or equal to $\sharp \mathrm D(m)+\sum_{j=1}^{t-k-1} \left\lceil \frac{a+b-j}b\right\rceil+k$, which according to Corollary \ref{cuantos-mas-seguidos} equals $\sharp \mathrm D(m,m+1,\ldots,m+t-k-1)+k=\sharp\mathrm D(L_T)+k$. Now we use Lemma \ref{suelo-general}, having $\sharp \mathrm D(M)=\sharp \mathrm D(L_M)+\sharp M\setminus L_M=\sharp \mathrm D(L_M)+r-t\ge \sharp \mathrm D(L_T)+k+r-t=\sharp \mathrm D(L_T)+\sharp T\setminus L_T=\sharp \mathrm D(T)$.
\end{proof}

\begin{theorem}\label{ordered-son-optimas}
 Let $S$ be a numerical semigroup with conductor $c$, and let $m\geq 2c-1$. Then every ordered $(S,m,r)$-amenable set is an optimal configuration.
\end{theorem}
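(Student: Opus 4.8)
The plan is to sandwich $\sharp\mathrm D(M)$ between $\delta^r(m)$ and itself, exploiting two facts already established: among the optimal configurations of cardinality $r$ there is one whose shadow is an interval starting at $m$, and ordered amenable sets realize the \emph{smallest} possible such shadow, while $\sharp\mathrm D$ depends monotonically on the length of an interval shadow.

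Concretely, I would first discard the trivial case $r=0$ (then $M=\emptyset$ and $\delta^0(m)=0=\sharp\mathrm D(M)$) and assume $r\ge 1$. By Proposition~\ref{condiciones-ms} there is an optimal configuration $M'$ of cardinality $r$ that is $(S,m,r)$-amenable, so $\sharp\mathrm D(M')=\delta^r(m)$. Applying Lemma~\ref{amenable-a-amenable-intervalo} to $M'$ produces an $(S,m,r)$-amenable set $T$ whose shadow $L_T$ is an interval containing $m$ and with $\sharp\mathrm D(T)\le\sharp\mathrm D(M')=\delta^r(m)$; as $T$ is a configuration of cardinality $r$, also $\delta^r(m)\le\sharp\mathrm D(T)$, hence $\sharp\mathrm D(T)=\delta^r(m)$. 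So $T$ is an optimal configuration whose shadow is an interval containing $m$.

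Next I would compare $M$ with $T$. Since $M$ is ordered amenable, its shadow $L_M$ is also an interval containing $m$, so Proposition~\ref{prop:ordered_shadow}, applied with the ordered set $M$ and the amenable set $T$ (whose shadow is an interval containing $m$), gives $\sharp L_T\ge\sharp L_M$. Both $M$ and $T$ satisfy the hypotheses of Corollary~\ref{cor:divs_intervals}, hence
\[
\sharp\mathrm D(M)=m-2g+r+\sum_{j=1}^{\sharp L_M-1}\pe[\frac{a-j}b],\qquad
\sharp\mathrm D(T)=m-2g+r+\sum_{j=1}^{\sharp L_T-1}\pe[\frac{a-j}b].
\]
Each term $\pe[\frac{a-j}b]$ is a nonnegative integer in the relevant range $1\le j\le a+b-1$ (it equals $0$ for $a\le j\le a+b-1$ and is positive otherwise), and a shadow has at most $a+b$ elements, so $\sharp L_T\ge\sharp L_M$ yields $\sharp\mathrm D(M)\le\sharp\mathrm D(T)$. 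Since $M$ is a configuration of cardinality $r$, $\delta^r(m)\le\sharp\mathrm D(M)$, and combining,
\[
\delta^r(m)\le\sharp\mathrm D(M)\le\sharp\mathrm D(T)=\delta^r(m),
\]
so $\sharp\mathrm D(M)=\delta^r(m)$ and $M$ is an optimal configuration.

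The argument is essentially bookkeeping on top of the machinery already in place, so I do not anticipate a genuine obstacle. The one step to handle carefully is the monotonicity of $\sharp\mathrm D$ with respect to the length of an interval shadow — this is exactly Corollary~\ref{cor:divs_intervals} (or, alternatively, Corollary~\ref{cuantos-mas-seguidos} together with Lemma~\ref{suelo-general}) combined with the nonnegativity of the ceiling terms — together with the inequality $\sharp L_T\ge\sharp L_M$ supplied by Proposition~\ref{prop:ordered_shadow}, which itself rests on the defining property of ordered amenable sets.
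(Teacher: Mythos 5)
Your proof is correct and follows the paper's own proof very closely: both invoke Proposition~\ref{condiciones-ms}, Lemma~\ref{amenable-a-amenable-intervalo}, and Proposition~\ref{prop:ordered_shadow} in the same order to produce an optimal $(S,m,r)$-amenable configuration $T$ whose shadow is an interval containing $m$ that is no shorter than $L_M$. The only difference is the last deduction from $\sharp L_M \le \sharp L_T$ to $\sharp\mathrm D(M) \le \sharp\mathrm D(T)$: the paper applies the general Corollary~\ref{cor:shadow_divisors} (shadow containment implies divisor-count inequality), while you compute both counts via the explicit formula of Corollary~\ref{cor:divs_intervals} and compare term by term using the nonnegativity of the ceilings --- a cosmetic variation on the same step.
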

\begin{proof}
 Let $M$ be an ordered $(S,m,r)$-amenable set. By Proposition \ref{condiciones-ms}, among the optimal configurations, there is always an $(S,m,r)$-amenable set. Let $N$ be an $(S,m,r)$-amenable set that is an optimal configuration. In light of Lemma \ref{amenable-a-amenable-intervalo}, we can assume that its shadow is an interval containing $m$. By Proposition \ref{prop:ordered_shadow}, the shadow of $M$ is contained in that of $N$, and by Corollary \ref{cor:shadow_divisors}, we get that $\#\mathrm D(M)\le \#\mathrm D(N)$. As $N$ is an optimal configuration we deduce that $\#\mathrm D(M)=\#\mathrm D(N)$, and thus $M$ is also an optimal configuration.
\end{proof}

\begin{corollary}
 Let $S=\langle a,a+1,\ldots,a+b\rangle$ with integers $a,b$ such that $0<b<a$. 
 Write $r$ as in formula (\ref{r-hr}), that is, $r=\mathrm h(r)+ \frac{1}2 b\mathrm h(r)(\mathrm h(r)-1)+k\mathrm h(r)+j$, with $-1\le k\le b-1$ and $0<j\le \mathrm h(r)$.
 Then $\mathrm E(r,\langle a,a+1,\ldots,a+b\rangle)$ equals
\[  r-1 +\left\{
  \begin{array}{ll}
    \sum_{i=1}^{b(\mathrm h(r)-1)+k+1} \pe[\frac{a-i}b], & \hbox{if } b(\mathrm h(r)-1)+k+2<a+b, \\
    \sum_{i=1}^{a+b-1} \pe[\frac{a-i}b], & \hbox{otherwise}.
  \end{array}
\right.
\]
\end{corollary}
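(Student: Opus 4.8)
The plan is to combine the two main structural results already assembled in this section: Theorem~\ref{ordered-son-optimas}, which tells us that ordered $(S,m,r)$-amenable sets are optimal configurations, and Corollary~\ref{cor:divs_intervals}, which gives an explicit count of the divisors of a configuration whose shadow is an interval starting at $m$. Since we are working with $S=\langle a,a+1,\ldots,a+b\rangle$, the conductor $c$ exists, and by the asymptotic formula~(\ref{Er}) together with Definition~\ref{FRnumber} we have $\mathrm E(S,r)=\delta^r(m)-(m+1-2g)$ for any fixed $m\ge 2c-1$; so it suffices to fix such an $m$, identify an optimal configuration of cardinality $r$, count its divisors, and subtract $m+1-2g$.

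First I would fix $m\ge 2c-1$ and let $M$ be an ordered $(S,m,r)$-amenable set. Such a set exists: if $b(\mathrm h(r)-1)+k+1<a+b-1$ this is exactly the set exhibited in Proposition~\ref{existencia-r-ordered}; in the remaining (boundary) case, the ordered amenable set has shadow the whole ground, which is still an $(S,m,r)$-amenable set whose shadow is an interval containing $m$, and the same argument via Theorem~\ref{ordered-son-optimas} and Corollary~\ref{cor:shadow_divisors} shows it is optimal (here one should be slightly careful about whether ``ordered amenable'' is literally defined when the shadow is the whole ground — if not, one simply argues directly that in that regime any amenable set whose shadow is the full ground is optimal, by Proposition~\ref{prop:ordered_shadow} and Lemma~\ref{amenable-a-amenable-intervalo}). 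By Theorem~\ref{ordered-son-optimas}, $\delta^r(m)=\sharp\mathrm D(M)$.

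Next I would compute $\sharp\mathrm D(M)$ from Corollary~\ref{cor:divs_intervals}. For that I need the cardinality $l$ of the shadow $M\cap[m,m+a+b)$, which is an interval $\{m,m+1,\ldots,m+l-1\}$. By Corollary~\ref{suelo-r-ordered}, $l=\#(M\cap[m,m+a+b))=(\mathrm h(r)-1)b+k+2$ in the non-boundary case, and $l=a+b$ in the boundary case $b(\mathrm h(r)-1)+k+2\ge a+b$. Plugging $l$ into Corollary~\ref{cor:divs_intervals},
\[
\sharp\mathrm D(M)=m-2g+r+\sum_{j=1}^{l-1}\pe[\frac{a-j}b].
\]
Then $\mathrm E(S,r)=\sharp\mathrm D(M)-(m+1-2g)=r-1+\sum_{j=1}^{l-1}\pe[\frac{a-j}b]$. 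Substituting $l-1=b(\mathrm h(r)-1)+k+1$ in the first case and $l-1=a+b-1$ in the second gives exactly the two branches in the statement. One should double-check the condition triggering the two cases matches: Corollary~\ref{suelo-r-ordered}'s formula is valid precisely when $(\mathrm h(r)-1)b+k+2<a+b$, i.e.\ $b(\mathrm h(r)-1)+k+2<a+b$, which is the branch condition as written; otherwise the shadow saturates the ground and $l=a+b$.

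The main obstacle I anticipate is bookkeeping rather than conceptual: making sure the case distinction is handled cleanly when the ordered amenable set's shadow would, by the formula of Corollary~\ref{suelo-r-ordered}, ``overflow'' the ground — in that regime one must confirm that an optimal configuration of cardinality $r$ still has an interval shadow equal to the full ground (so that Corollary~\ref{cor:divs_intervals} applies with $l=a+b$), and that no smaller-shadow amenable configuration of cardinality $r$ does better, which again follows from Proposition~\ref{prop:ordered_shadow}, Lemma~\ref{amenable-a-amenable-intervalo} and Corollary~\ref{cor:shadow_divisors}. A minor additional check is that $2c-1$ is a legitimate choice of $m$ and that the Feng-Rao number does not depend on it, which is guaranteed by the displayed formula $\delta^r(m)=m+1-2g+\mathrm E(S,r)$ valid for all $m\ge 2c-1$.
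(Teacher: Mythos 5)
Your argument for the non-boundary case (when $b(\mathrm h(r)-1)+k+2<a+b$) is essentially identical to the paper's: one invokes Proposition~\ref{existencia-r-ordered} to exhibit an ordered $(S,m,r)$-amenable set, Theorem~\ref{ordered-son-optimas} to conclude it is an optimal configuration, Corollary~\ref{suelo-r-ordered} for the size of its shadow, Corollary~\ref{cor:divs_intervals} for the divisor count, and then subtracts $m+1-2g$.

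The boundary case, however, has a real gap, which you half-anticipate but do not resolve. By Definition of ordered amenable set the shadow is required to be $\{m,\ldots,m+t\}$ with $0\le t<a+b-1$, so when $b(\mathrm h(r)-1)+k+2\ge a+b$ there is \emph{no} ordered $(S,m,r)$-amenable set. This does not merely make Theorem~\ref{ordered-son-optimas} inapplicable; it also disables Proposition~\ref{prop:ordered_shadow}, which you cite as the fallback: that proposition compares an arbitrary interval-shadow amenable set against an ordered one, and if the latter does not exist for the given $r$ the comparison object vanishes. Thus the claim that ``any amenable set whose shadow is the full ground is optimal'' does not follow from the results you list; in particular nothing you cite rules out an $(S,m,r)$-amenable set with interval shadow strictly shorter than the ground, which by Corollary~\ref{cor:divs_intervals} would have \emph{fewer} divisors. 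The paper closes this gap constructively: it takes the largest ordered amenable set $M_0$ (the case $b(\mathrm h(r)-1)+k+1=a+b-2$, shadow $\{m,\ldots,m+a+b-2\}$), checks that $\mathrm D(M_0\cup\{m+a+b-1\})=\mathrm D(M_0)\cup\{m+a+b-1\}$, so that adding $m+a+b-1$ raises the divisor count by exactly one and hence yields an optimal configuration of cardinality $\#M_0+1$ whose shadow is the whole ground, and then appeals to Corollary~\ref{cor2:delta-suelo} --- which you never invoke --- to propagate optimality of the interval configuration to all larger $r$. To complete your proof you need this bootstrap (or an equivalent direct argument that, in the boundary regime, every $(S,m,r)$-amenable set with interval shadow has shadow equal to the full ground), not a citation of Proposition~\ref{prop:ordered_shadow}.
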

\begin{proof}
Assume that $b(\mathrm h(r)-1)+k+1<a+b-1$. Then by Proposition \ref{existencia-r-ordered}, there exists an ordered $(S,m,r)$-amenable set. In this setting the proof follows from Corollaries \ref{cor:divs_intervals} and \ref{suelo-r-ordered} and Theorem \ref{ordered-son-optimas}.

Observe also that if $b(\mathrm h(r)-1)+k+1=a+b-2$, by Proposition \ref{existencia-r-ordered}, the set 
\begin{multline*} M=(\mathrm D(m+(\mathrm h(r)-1)(a+b))\cap [m,\infty))\\ \cup \{m+ua+v~|~ (\mathrm h(r)-1)b+1\le v\le (\mathrm h(r)-1)b+k+1, 0\le u\le \mathrm h(r)-1\}
\end{multline*} 
is an ordered amenable set, and thus by Theorem \ref{ordered-son-optimas} an optimal configuration for $r=\#M$. As $\mathrm D(M\cup\{m+a+b-1\})=\mathrm D(M)\cup\{m+a+b-1\}$, the set $M\cup\{m+a+b-1\}$ is an optimal configuration of cardinality $r+1$, whose shadow fills the whole ground. By using now Corollary \ref{cor2:delta-suelo}, we get optimal configurations for cardinalies greater than $r+1$. And the proof follows easily by Corollary \ref{cor:divs_intervals}.
\end{proof}


Needless to say that, by using this formula for numerical semigroups generated by intervals, 
we have no need of the general Algorithm \ref{alg:compute_FengRao}, speeding-up the computation 
of Feng-Rao distances for such semigroups. 

\begin{remark}

The reader can check that we have ${\rm E}(r,S)=\rho_{r}$ exactly in the following cases: 

\begin{enumerate}

\item[(A)] If either $r=b\sigma(p)+1,\ldots,b\sigma(p)+p+1$ and the ground is not completely filled, or 

\item[(B)] $r\geq r_{0}$, where $r_{0}$ is the first $r$ filling the ground, 

\end{enumerate}

being $\sigma(p):=1+\cdots+p=\frac{1}{2}p(p+1)$ and $p\geq 1$. 

Besides, since both sequences ${\rm E}(r,S)$ and $\rho_r$ are 
strictly increasing, the largest difference between them is for $r=2$ 
and for the first $r$ after each element in the first case, that is,
$r=b\sigma(p)+p+2$ where $\rho_{r}$ jumps from one interval to the next.

\end{remark}


\end{document}